\documentclass{amsart}

\usepackage[english]{babel}

\usepackage{amstext,amsmath,amsthm,amssymb,mathrsfs}
\usepackage{float}

\usepackage{txfonts}
\usepackage{enumitem}
\usepackage[T1]{fontenc}

\newcommand{\lra}{\ensuremath{\Leftrightarrow}}

\newcommand{\longra}{\ensuremath{\longrightarrow}}

\newcommand{\C}{\varmathbb{C}}
\newcommand{\R}{\varmathbb{R}}
\newcommand{\N}{\varmathbb{N}}

\newcommand{\Z}{\varmathbb{Z}}

\newcommand{\E}{\varmathbb{E}}

\newcommand{\T}{\varmathbb{T}}

\newcommand{\Prob}{\varmathbb{P}}

\newcommand{\norm}[1]{||#1||}
\newcommand{\norml}[1]{\left|\left|#1\right|\right|}
\newcommand{\normB}[1]{\Big|\Big|#1\Big|\Big|}

\newcommand{\normm}[1]{\left|\left|\left|#1\right|\right|\right|}

\DeclareMathAlphabet{\mathpzc}{OT1}{pzc}{m}{it}

\DeclareMathOperator{\supp}{supp}

\theoremstyle{plain}
\newtheorem{thm}{Theorem}[section]
\newtheorem{lemma}[thm]{Lemma}
\newtheorem{prop}[thm]{Proposition}
\newtheorem{cor}[thm]{Corollary}

\newtheorem{question}[thm]{Question}

\theoremstyle{definition}

\newtheorem{ex}[thm]{Example}

\theoremstyle{remark}
\newtheorem{remark}[thm]{Remark}

\begin{document}

\title[Difference Norms for Vector-Valued Bessel Potential Spaces]{Difference Norms for Vector-Valued Bessel Potential Spaces with an Application to Pointwise Multipliers}

\author{Nick Lindemulder}
\email{N.Lindemulder@tudelft.nl}
\address{Delft Institute of Applied Mathematics, Delft University of Technology, P.O. Box
5031, 2600 GA Delft, The Netherlands}

\subjclass[2010]{Primary 46E40; Secondary 42B15, %42B25,
46B09, 46E30, 46E35}

\keywords{ Bessel potential space, difference norm, pointwise multiplier, UMD space, randomized Littlewood-Paley decomposition, Fourier multiplier, $\mathcal{R}$-boundedness, $A_{p}$-weight}
\date{\today}

\begin{abstract}
In this paper we prove a randomized difference norm characterization for Bessel potential spaces with values in UMD Banach spaces. The main ingredients are $\mathcal{R}$-boundedness results for Fourier multiplier operators, which are of independent interest. As an application we characterize the pointwise multiplier property of the indicator function of the half-space on these spaces. All results are proved in the setting of weighted spaces.
\end{abstract}

\maketitle

\section{Introduction}
Vector-valued Sobolev and Bessel potential spaces are important in the $L^{p}$-approach to abstract evolution and integral equations,
both in the deterministic setting (cf. e.g. \cite{Amann_Lin_and_quasilinear_parabolic,Pruss,Zacher}) and in the stochastic setting (cf. e.g. \cite{Desch&Londen_max-reg_stochastic_integral_equations,NVW_maximal-Lp-reg_stochastic_evolution_equations,NVW_Stochastic_maximal_Lp-reg}).
Here a central role is played by the Banach spaces that have the so-called UMD property (unconditionality of martingale differences);
see Section~\ref{subsec:sec:prereq:UMD&randomization} and the remarks below.
The class of Banach spaces that have UMD includes all Hilbert spaces, $L^{p}$-spaces with $p \in (1,\infty)$ and the reflexive Sobolev spaces, Triebel-Lizorkin spaces, Besov spaces and Orlicz spaces.

Let $X$ be a Banach space, $s \in \R$ and $p \in (1,\infty)$. The Bessel potential space $H^{s}_{p}(\R^{d};X)$ is defined in the usual Fourier analytic way via the Bessel potential operator $\mathcal{J}_{s} = (I-\Delta)^{s/2}$ based on the Lebesgue-Bochner space $L^{p}(\R^{d};X)$; see Section~\ref{subsec:sec:prereq:function_spaces}. If $X$ has UMD and $k \in \N$, then we have $H^{k}_{p}(\R^{d};X) = W^{k}(\R^{d};X)$, where $W^{k}_{p}(\R^{d};X)$ denotes the $k$-th order $X$-valued Sobolev space on $\R^{d}$ with integrability parameter $p$; see \cite{boek}, which also contains some converse results in this direction.
Furthermore, if $X$ has UMD and $s=k+\theta$ with $k \in \N$ and $\theta \in [0,1)$, then $H^{s}_{p}(\R^{d};X)$ can be realized as the complex interpolation space
\[
H^{s}_{p}(\R^{d};X) = [W^{k}_{p}(\R^{d};X),W^{k+1}_{p}(\R^{d};X)]_{\theta}.
\]

In the scalar-valued case $X = \C$,
Strichartz \cite{Strichartz_multipliers_fractional_Sobolev} characterized the Bessel potential space $H^{s}_{p}(\R^{d}) = H^{s}_{p}(\R^{d};\C)$,
with $s \in (0,1)$ and $p \in (1,\infty)$, by means of differences.
The characterization says that, for every $f \in L^{p}(\R^{d};\C)$, there is the equivalence of extended norms
\begin{equation}\label{eq:intro:difference_norm_scalar-case}
\norm{f}_{H^{s}_{p}(\R^{d};\C)} \eqsim \norm{f}_{L^{p}(\R^{d};\C)} +
\normB{ \Big(\int_{0}^{\infty}t^{-2s}\Big[ t^{-d}\int_{B(0,t)}\norm{\Delta_{h}f}_{\C}\,dh \Big]^{2} \frac{dt}{t} \Big)^{1/2} }_{L^{p}(\R^{d})},
\end{equation}
where $\Delta_{h}f = f(\,\cdot\,+h)-f$ for each $h \in \R^{d}$.
This extends to Hilbert spaces \cite[Section~6.1]{Walker_PhD-thesis}. In fact, given a Banach space $X$, the $X$-valued version of \eqref{eq:intro:difference_norm_scalar-case} is valid if and only if $X$ is isomorphic to a Hilbert space. Indeed, the $X$-valued version of  the right-hand side of \eqref{eq:intro:difference_norm_scalar-case} defines an extended norm on $L^{p}(\R^{d};X)$ which characterizes the Triebel-Lizorkin space $F^{s}_{p,2}(\R^{d};X)$ \cite[Section~2.3]{S&S_jena-notes}.
But the identity
\begin{equation}\label{eq:intro:classical_LP-decomp_H-space}
H^{s}_{p}(\R^{d};X) = F^{s}_{p,2}(\R^{d};X),
\end{equation}
i.e.\ the classical Littlewood-Paley decomposition for Bessel potential spaces, holds true if and only if $X$ is isomorphic to a Hilbert space \cite{Han&eyer_char_Hilbert_space,Rubio_de_Francia&Torrea_banach_techniques_vector-valued_Fourier_analysis}.
However, if $X$ is a Banach space with UMD, then one can replace \eqref{eq:intro:classical_LP-decomp_H-space} with a randomized Littlewood-Paley decomposition \cite{Meyries&Veraar_pt-multiplication} (see \eqref{eq:randomized-LP-decomp}), an idea which for the case $s=0$ originally goes back to Bourgain \cite{Bourgain_vector-valued_singular_integrals} and McConnell \cite{McConnell}. In \cite{Meyries&Veraar_pt-multiplication} this was used to investigate the pointwise multiplier property of the indicator function of the half-space on UMD-valued Bessel potential spaces. The randomized Littlewood-Paley decomposition will also play a crucial role in this paper to obtain a randomized difference norm characterization for UMD-valued Bessel potential spaces; see Theorem~\ref{thm:intro_main_result;difference}.

Since the early 1980's, randomization and martingale techniques have played a fundamental role in Banach space-valued analysis (cf. e.g. \cite{Burkholder_martingales_and_singular_integrals_Banach_spaces,Clement&Pagter&Suckochev&Witvliet_Schauder_decompositions, Denk&Hieber&Pruss_monograph2001,boek,boek2,Hytonen_survey-paper_vector-valued,Kalton&Weis_H_infty-calculus&square_functions, Kunstmann&Weis_lecture_notes,Rubio_de_Francia_survey_UMD, NVW_survey_stochastic_integration}). In particular, in Banach space-valued harmonic analysis and Banach space-valued stochastic analysis, a central role is played by the UMD spaces. Indeed, many classical Hilbert space-valued results from both areas have been extended to the UMD-valued case, and many of these extensions in fact characterize the UMD property. In vector-valued harmonic analysis, (one of) the first major breakthrough(s) is the deep result due to Bourgain \cite{Bourgain_Some_remarks_UMD} and Burkholder \cite{Burkholder_geometric_char_UMD} that a Banach space $X$ has UMD if and only if it is of class $\mathcal{HT}$, i.e.\ the Hilbert transform has a bounded extension to $L^{p}(\R;X)$ for some/all $p \in (1,\infty)$. As another major breakthrough we would like to mention the work of Weis \cite{Weis_operator-valued_FM_and_max-reg} on operator-valued Fourier multipliers on UMD-valued $L^{p}$-spaces ($p \in (1,\infty)$) with an application to the maximal $L^{p}$-regularity problem for abstract parabolic evolution equations. A central notion in this work is the $\mathcal{R}$-boundedness of a set of bounded linear operators on a Banach space, which is a randomized boundedness condition stronger than uniform boundedness; see Section~\ref{subsec:sec:prereq:UMD&randomization}. In Hilbert spaces it coincides with uniform boundedness and in $L^{p}$-spaces ($p \in [1,\infty)$), or more generally in Banach function spaces with finite cotype, it coincides with so-called $\ell^{2}$-boundedness. It follows from the work of Rubio de Francia (see \cite{Rubio_de_Francia1982,Rubio_de_Francia1983,Rubio_de_Francia1984} and \cite{Garcia-Cuarva&Rubio_de_Francia}) that $\ell^{2}$-boundedness in $L^{p}(\R^{d})$ ($p \in (1,\infty)$) is closely related to weighted norm inequalities; also see \cite{Gallarati&Lorist&Veraar}.

Randomization techniques also play an important role in this paper. As already mentioned above, we work with a randomized substitute of \eqref{eq:intro:classical_LP-decomp_H-space}. This approach naturally leads to the problem of determining the $\mathcal{R}$-boundedness of a sequence of Fourier multiplier operators. The latter forms a substantial part of this paper, which is also of independent interest; see Section~\ref{sec:R-bdd_FM}.

The results in this paper are proved in the setting of weighted spaces, which includes the unweighted case.
We consider weights from the so-called Muckenhoupt class $A_{p}$. This is a class of weights for which many harmonic analytic tools from the unweighted setting remain valid; see Section~\ref{subsec:prelim:weights}. An important example of an $A_{p}$-weight is the power weight $w_{\gamma}$, given by
\begin{equation}\label{eq:intro:power_weight}
w_{\gamma}(x_{1},x') = |x_{1}|^{\gamma}, \quad\quad (x_{1},x') \in \R^{d} = \R \times \R^{d-1},
\end{equation}
for the parameter $\gamma \in (-1,p-1)$. In the maximal $L^{p}$-regularity approach to parabolic evolution equations these power weights yield flexibility in the optimal regularity of the initial data (cf. e.g. \cite{Meyries&Schnaubelt_fractional_Sobolev_spaces_temporal_weights,Meyries&Schnaubelt_maximal_reg_temporal_weights,
Meyries&Veraar_traces&embeddings_anisotropic_function_spaces,Pruss&Simonett_maximal_reg_weighted_spaces}).

The following theorem is our main result. Before we can state it, we first need to explain some notation. We denote by $\{\varepsilon_{j}\}_{j \in \N}$ a Rademacher sequence on some probability space $(\Omega,\mathcal{F},\Prob)$, i.e. a sequence of independent symmetric $\{-1,1\}$-valued random variables on $(\Omega,\mathcal{F},\Prob)$. For a natural number $m \geq 1$ and a function $f$ on $\R^{d}$ with values in some vector space $X$, we write
\[
\Delta^{m}_{h}f(x) = \sum_{j=0}^{m}(-1)^{j}{m \choose j}f(x+(m-j)h), \quad\quad x \in \R^{d}, h \in \R^{d}.
\]

\begin{thm}\label{thm:intro_main_result;difference}
Let $X$ be a UMD Banach space, $s > 0$, $p \in (1,\infty)$, $w \in A_{p}(\R^{d})$ and $m \in \N$, $m>s$. Suppose that
\begin{itemize}
\item $K=1_{[-1,1]^{d}}$ in the unweighted case $w=1$; or
\item $K \in \mathcal{S}(\R^{d})$ is such that $\int_{\R}K(y)dy \neq 0$ in the general weighted case.
\end{itemize}
For all $f \in L^{p}(\R^{d},w;X)$ we then have the equivalence of extended norms
\begin{equation}\label{eq:thm:intro_main_result;difference}
\norm{f}_{H^{s}_{p}(\R^{d},w;X)}
\eqsim \norm{f}_{L^{p}(\R^{d},w;X)} + \sup_{J \in \N}\normB{\sum_{j=1}^{J}\varepsilon_{j}2^{js}\int_{\R^{d}}K(h)\Delta^{m}_{2^{-j}h}f\,dh}_{L^{p}(\Omega;L^{p}(\R^{d},w;X))}.
\end{equation}
\end{thm}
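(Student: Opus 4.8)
The plan is to pass through a randomized Littlewood–Paley decomposition and then reduce the difference operators to Fourier multipliers whose symbols we can control with $\mathcal{R}$-boundedness. Concretely, write $T_\phi$ for the Fourier multiplier with symbol $\phi$, and recall the randomized Littlewood–Paley characterization from \cite{Meyries&Veraar_pt-multiplication}: for $X$ with UMD, $p\in(1,\infty)$ and $w\in A_p(\R^d)$,
\[
\norm{f}_{H^s_p(\R^d,w;X)}\eqsim\norm{f}_{L^p(\R^d,w;X)}+\sup_{J\in\N}\normB{\sum_{j=1}^J\varepsilon_j2^{js}S_jf}_{L^p(\Omega;L^p(\R^d,w;X))},
\]
where $\{S_j\}$ is a standard dyadic Littlewood–Paley family, say $S_j=T_{\psi(2^{-j}\cdot)}$ with $\psi\in\mathcal{S}(\R^d)$ supported in an annulus. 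The quantity on the right of \eqref{eq:thm:intro_main_result;difference} involving the differences is, after taking the Fourier transform, exactly $\sum_{j=1}^J\varepsilon_j2^{js}T_{m_j}f$ with symbol
\[
m_j(\xi)=\int_{\R^d}K(h)\,\bigl(e^{i2^{-j}h\cdot\xi}-1\bigr)^m\,dh=\widehat{K}(-2^{-j}\xi)\cdot(\text{stuff})\ \text{— more precisely}\ m_j(\xi)=\sum_{l=0}^m(-1)^{m-l}\tbinom{m}{l}\widehat{K}(-(m-l)2^{-j}\xi),
\]
using $\Delta^m_h$. So the theorem will follow if I show that the two operator families
\[
\Bigl\{\,\sum_{j=1}^J\varepsilon_j2^{js}S_j\Bigr\}\quad\text{and}\quad\Bigl\{\,\sum_{j=1}^J\varepsilon_j2^{js}T_{m_j}\Bigr\}
\]
define equivalent extended norms modulo the $L^p$ term, and the natural route is to intercalate a smooth dyadic decomposition and prove that each family is dominated by the other up to an $\mathcal{R}$-bounded Fourier multiplier perturbation.

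The key algebraic observation is that $m_j(\xi)=\mu(2^{-j}\xi)$ where $\mu(\xi)=\int K(h)(e^{ih\cdot\xi}-1)^m\,dh$ is a fixed symbol, independent of $j$. Near $\xi=0$ one has $(e^{ih\cdot\xi}-1)^m=O(|\xi|^m)$, so $\mu(\xi)$ vanishes to order $m>s$ at the origin; away from the origin $\mu$ is smooth and, under the hypothesis $\int K\neq0$ (resp. the explicit computation for $K=1_{[-1,1]^d}$), nonvanishing on the relevant annuli after summing over the shifted copies. Thus I want to write, for a suitable Littlewood–Paley symbol $\psi$ adapted to $\mu$,
\[
2^{js}S_jf=2^{js}T_{\psi(2^{-j}\cdot)}f=2^{js}T_{[\psi/\mu](2^{-j}\cdot)}\,T_{m_j}\tilde S_jf,
\]
where $\tilde S_j$ is a fattened Littlewood–Paley projection (so $\tilde S_jT_{m_j}=T_{m_j}$ on the support of $\psi(2^{-j}\cdot)$), and conversely $2^{js}T_{m_j}f=2^{js}T_{[\mu/\tilde\psi](2^{-j}\cdot)}\tilde S_jf$ on the high-frequency part, while the contributions of $T_{m_j}$ on frequencies $\lesssim 1$ and on frequencies where the extra shifted copies of $\widehat K$ could interfere are absorbed — using $m>s$ for the low frequencies — into the $\norm{f}_{L^p(\R^d,w;X)}$ term. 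The scaling $2^{js}$ then gets redistributed: $2^{js}[\psi/\mu](2^{-j}\xi)=[2^{\cdot s}\cdot\text{dilate}]$, and because $\psi/\mu$ and $\mu/\tilde\psi$ are fixed Schwartz-type symbols on annuli (each annulus a compact set bounded away from $0$ and $\infty$), the families $\{2^{js}[\psi/\mu](2^{-j}\cdot)\}_{j,J}$ etc. are uniformly-in-$j$ controlled. Summing against the Rademacher weights and using a multiplier theorem that upgrades such a uniformly-$C^\infty$-controlled family to an $\mathcal{R}$-bounded family, the Kahane contraction principle then lets me move the $\mathcal{R}$-bounded multiplier out of the sum $\sum\varepsilon_j(\cdots)$ and conclude the two-sided estimate.

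The two technical engines are therefore: (i) the operator-valued Mikhlin/Hörmander-type Fourier multiplier theorem on $L^p(\R^d,w;X)$ for $X$ with UMD and $w\in A_p$, applied to dilation-invariant families to get $\mathcal{R}$-boundedness — this is precisely the content of the $\mathcal{R}$-boundedness results for Fourier multipliers referenced in Section~\ref{sec:R-bdd_FM}, so I may invoke it; and (ii) the Kahane contraction principle together with the standard fact that if $\{T_k\}_{k}$ is $\mathcal{R}$-bounded then $\sum_k\varepsilon_kT_kg_k$ can be estimated by $\sup_k\norm{T_k}_{\mathcal{R}}$ times $\normb{\sum_k\varepsilon_kg_k}$ in $L^p(\Omega;L^p(\R^d,w;X))$. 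With these in hand the proof is: (1) pass to the randomized Littlewood–Paley norm via \cite{Meyries&Veraar_pt-multiplication}; (2) compute $\widehat{\Delta^m_h f}$ and identify the difference term as $\sum_j\varepsilon_j2^{js}T_{m_j}f$ with $m_j=\mu(2^{-j}\cdot)$; (3) analyze $\mu$: vanishing of order $\ge m>s$ at the origin, smoothness and the requisite non-degeneracy on dyadic annuli under the stated hypotheses on $K$; (4) construct the transfer multipliers $\psi/\mu$, $\mu/\tilde\psi$ on annuli and verify the uniform bounds giving $\mathcal{R}$-boundedness of the rescaled families; (5) split off the low-frequency part of $T_{m_j}$, bounding it by $\norm{f}_{L^p(\R^d,w;X)}$ using the order of vanishing $m>s$; (6) combine via the contraction principle to get "$\lesssim$" and "$\gtrsim$" separately. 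The main obstacle I anticipate is step (3)–(4) in the unweighted case $K=1_{[-1,1]^d}$, where $\widehat K(\xi)=\prod_k\frac{2\sin\xi_k}{\xi_k}$ has zeros at every $\xi_k\in\pi\Z\setminus\{0\}$, so $\mu$ itself vanishes on a lattice of hyperplanes and $\psi/\mu$ is not globally smooth; this forces a more careful choice of the Littlewood–Paley symbols $\psi$ (and of a finite partition of the annulus into pieces avoiding these zeros, or a rotation/slicing argument exploiting that the randomized sum only needs $\mathcal{R}$-boundedness of finitely overlapping pieces) so that on the support of each $\psi(2^{-j}\cdot)$ the relevant combination of shifted $\widehat K$'s stays bounded below — this is exactly the point where the two cases in the hypothesis genuinely diverge and where the bulk of the work will lie.
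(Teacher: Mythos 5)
Your overall framework—pass to the randomized Littlewood--Paley characterization \eqref{eq:randomized-LP-decomp} and reduce to Fourier multipliers, noting that the difference operator at scale $2^{-j}$ is $T_{\mu(2^{-j}\cdot)}$ for a fixed symbol $\mu(\xi)=\int K(h)(e^{ih\cdot\xi}-1)^m\,dh$ vanishing to order $m>s$ at the origin—is exactly the paper's. But there is a gap in your ``$\gtrsim$'' step. The identity $S_jf=T_{[\psi/\mu](2^{-j}\cdot)}T_{m_j}\tilde S_jf$ requires $|\mu|$ bounded below on $\supp\psi$, a fixed dyadic annulus, and your assertion that $\mu$ is ``nonvanishing on the relevant annuli'' is unsubstantiated: $\widehat K(0)\neq0$ gives nothing on a prescribed annulus (already for $K=1_{[-1,1]}$, $d=1$, $m=2$, one checks $\mu(\xi)=2\,\mathrm{sinc}(2\xi)-4\,\mathrm{sinc}(\xi)+2$ changes sign between $\xi=2$ and $\xi=2.5$). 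Your diagnosis of the trouble is also off: $\mu$ does \emph{not} inherit the hyperplane zeros of $\widehat K$—it is a combination of dilates of $\widehat K$ plus the nonzero constant $(-1)^m\widehat K(0)$—so the proposed partition-the-annulus fix attacks a non-issue and in any case could not remove genuine interior zeros of $\mu$. The paper's proof of Theorem~\ref{thm:difference_norms_separate_ineq_'sharp'}(i) avoids inverting $\mu$ altogether: $\widehat K(0)\neq0$ gives a Tauberian lower bound on $\mathscr FK^{\Delta^m}$ (not $\mu$) on a small annulus near the origin (Example~\ref{ex:thm:difference_norms_separate_ineq_'sharp';examples_cond_rho}(i)); one then forms $\hat k(\xi)=\mathscr FK^{\Delta^m}(2^{-N}\xi)-\mathscr FK^{\Delta^m}(\xi)$ and uses $\mathscr FK^{\Delta^m}\in C_0(\R^d)$ to conclude that for $N$ large $\hat k$ is bounded below on a large annulus, where it is inverted (Lemma~\ref{lemma:afsch_Bessel_potential_by_convolutions}); the punchline is $k_j*f=K_m(2^{-(j+N)},f)-K_m(2^{-j},f)$, so the difference seminorm still dominates $\normb{\sum_j\varepsilon_j2^{js}k_j*f}$ via the contraction principle. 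This two-scale subtraction is the device your proposal is missing.

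You also misplace where the two hypotheses on $K$ genuinely diverge: not in the Tauberian input, but in the $\mathcal R$-boundedness needed for ``$\lesssim$'', namely of $\{f\mapsto K_m(2^{-j},f):j\geq1\}$. Your plan invokes ``a multiplier theorem that upgrades a uniformly-$C^\infty$-controlled family to an $\mathcal R$-bounded family'', but that is precisely what is unavailable in general UMD spaces without property $(\alpha)$: the family $\{\mu(2^{-j}\cdot)\}_j$ is uniformly Mihlin by scaling invariance, yet uniform Mihlin bounds do not yield $\mathcal R$-boundedness in general (cf.\ the discussion preceding Proposition~\ref{prop:R-bdd_FM_alpha}). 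The paper's Theorem~\ref{thm:R-bdd_dilations_FM;general_UMD} needs additional decay rates $\delta_0,\delta_\infty>0$; these hold when $K\in\mathcal S(\R^d)$ but fail for $K=1_{[-1,1]^d}$, whose Fourier transform and its derivatives decay only like $|\xi_k|^{-1}$, so no $\delta_\infty>0$ works. That case forces the sharper Mihlin--H\"older criterion (Corollary~\ref{cor:thm:R-bdd_dilations_via_Mihlin-Holder}) applied to $\mathrm{sinc}$ and propagated by the product structure of $1_{[-1,1]^d}$ (Example~\ref{ex:thm:difference_norms_separate_ineq_'sharp';R-bdd_indicator_cube})—an argument currently run only in the unweighted setting, which is precisely why the theorem's two hypotheses split.
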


\begin{remark}
If $f \in H^{s}_{p}(\R^{d},w;X)$, then the finiteness of the supremum on the RHS of \eqref{eq:thm:intro_main_result;difference} actually implies the convergence of the sum $\sum_{j=1}^{\infty}\varepsilon_{j}2^{js}\int_{\R^{d}}K(h)\Delta^{m}_{2^{-j}h}f\,dh$ in $L^{p}(\Omega;L^{p}(\R^{d},w;X))$.
Moreover, \eqref{eq:thm:intro_main_result;difference} then takes the form
\[
\norm{f}_{H^{s}_{p}(\R^{d},w;X)}
\eqsim \norm{f}_{L^{p}(\R^{d},w;X)} + \normB{\sum_{j=1}^{\infty}\varepsilon_{j}2^{js}\int_{\R^{d}}K(h)\Delta^{m}_{2^{-j}h}f\,dh}_{L^{p}(\Omega;L^{p}(\R^{d},w;X))}.
\]
This follows from the convergence result \cite[Theorem~9.29]{Ledoux&Talagrand} together with the fact that $L^{p}(\R^{d},w;X)$ (as a UMD space) does not contain a copy $c_{0}$.
\end{remark}

\begin{remark}\label{rmk:thm:intro_main_result;difference}
We will in fact prove a slightly more general difference norm characterization for $H^{s}_{p}(\R^{d},w;X)$, namely Theorem~\ref{thm:difference_norms_separate_ineq_'sharp'}, where we consider kernels $K$ satisfying certain integrability conditions plus an $\mathcal{R}$-boundedness condition.
Here the $\mathcal{R}$-boundedness condition is only needed for the inequality '$\gtrsim$'. In the case $m=1$ it corresponds to the $\mathcal{R}$-boundedness of the convolution operators $\{ f \mapsto K_{t}*f : t=2^{j}, j \geq 1 \}$ in $\mathcal{B}(L^{p}(\R^{d},w;X))$, where $K_{t} = t^{d}K(t\,\cdot\,)$. For more information we refer to Section~\ref{subsec:sec:difference_norms;main_result}.
\end{remark}

To the best of our knowledge, Theorem~\ref{thm:intro_main_result;difference} is the first difference norm characterization for (non-Hilbertian) Banach space-valued Bessel potential spaces available in the literature. In the special case when $X$ is a UMD Banach function space, the norm equivalence from this theorem takes (with possibly different implicit constants), by the Khinthchine-Maurey theorem, the square function form
\[
\norm{f}_{H^{s}_{p}(\R^{d},w;X)}
\eqsim \norm{f}_{L^{p}(\R^{d},w;X)} + \normB{\Big(\sum_{j=1}^{\infty}\big|\,2^{js}\int_{\R^{d}}K(h)\Delta^{m}_{2^{-j}h}f\,dh\,\big|^{2}\Big)^{1/2}}_{L^{p}(\R^{d},w;X)};
\]
see Section~\ref{sec:BFS-case}.
In the unweighted scalar-valued case $X=\C$, this a discrete version for the case $q=2$ of the characterization \cite[Theorem~2.6.3]{Triebel_Theory_of_FS_II} of the Triebel-Lizorkin space $F^{s}_{p,q}(\R^{d})$ by weighted means of differences (recall \eqref{eq:intro:classical_LP-decomp_H-space}).
Furthermore, in the unweighted scalar-valued case $X=\C$, one can also think of it as a discrete analogue of Strichartz's characterization~ \eqref{eq:intro:difference_norm_scalar-case}.

As an application of Theorem~\ref{thm:intro_main_result;difference}, we characterize the boundedness of the indicator function $1_{\R^{d}_{+}}$ of the half-space $\R^{d}_{+}=\R_{+} \times \R^{d-1}$ as a pointwise multiplier on $H^{s}_{p}(\R^{d},w;X)$ in terms of a continuous inclusion of the corresponding scalar-valued Bessel potential space $H^{s}_{p}(\R^{d},w)$ into a certain weighted $L^{p}$-space; see Theorem~\ref{thm:pointwise_multiplier}.
The importance of the pointwise multiplier property of $1_{\R^{d}_{+}}$ lies in the fact that it served as one of the main ingredients of Seeley's result \cite{Seeley_interpolation_boundary} on the characterization of complex interpolation spaces of Sobolev spaces with boundary conditions.
As an application of an extension of Seeley's characterization to the weighted vector-valued case one could, for example, characterize the fractional power domains of the time derivative with zero initial conditions on $L^{p}(\R^{d}_{+},w_{\gamma};X)$.

\begin{thm}\label{thm:pointwise_multiplier}
Let $X \neq \{0\}$ be a UMD space, $s \in (0,1)$, $p \in (1,\infty)$ and $w \in A_{p}(\R^{d})$.
Let $w_{s,p}$ be the weight on $\R^{d} = \R \times \R^{d-1}$ given by $w_{s,p}(x_{1},x') := |x_{1}|^{-sp}w(x_{1},x')$ if $|x_{1}| \leq 1$ and $w_{s,p}(x_{1},x') := w(x_{1},x')$ if $|x_{1}| > 1$.
Then  $1_{\R^{d}_{+}}$ is a pointwise multiplier on $H^{s}_{p}(\R^{d},w;X)$ if and only if there is the inclusion
\begin{equation}\label{eq:thm:pointwise_multiplier;inclusion}
H^{s}_{p}(\R^{d},w) \hookrightarrow L^{p}(\R^{d},w_{s,p}).
\end{equation}
\end{thm}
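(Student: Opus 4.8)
The main tool is the difference norm characterization of Theorem~\ref{thm:intro_main_result;difference} (or, for the general weighted case, Theorem~\ref{thm:difference_norms_separate_ineq_'sharp'}). Since $s\in(0,1)$ we may take $m=1$; write $\Delta_{h}g:=g(\,\cdot\,+h)-g$, fix a kernel $K$ as in the statement, and set $T_{s}g:=\sup_{J\in\N}\big\|\sum_{j=1}^{J}\varepsilon_{j}2^{js}\int_{\R^{d}}K(h)\Delta_{2^{-j}h}g\,dh\big\|_{L^{p}(\Omega;L^{p}(\R^{d},w;X))}$, so that $\|g\|_{H^{s}_{p}(\R^{d},w;X)}\eqsim\|g\|_{L^{p}(\R^{d},w;X)}+T_{s}g$. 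The elementary Leibniz identity $\Delta_{h}(1_{\R^{d}_{+}}f)=1_{\R^{d}_{+}}\Delta_{h}f+(\Delta_{h}1_{\R^{d}_{+}})f+(\Delta_{h}1_{\R^{d}_{+}})(\Delta_{h}f)$, applied with $h\mapsto 2^{-j}h$, integrated against $K(h)\,dh$ and summed against $\varepsilon_{j}2^{js}$, splits the summand defining $T_{s}(1_{\R^{d}_{+}}f)$ into
\begin{gather*}
\mathrm{I}_{j}(x)=1_{\R^{d}_{+}}(x)\!\int\! K(h)\Delta_{2^{-j}h}f(x)\,dh,\qquad \mathrm{II}_{j}(x)=\kappa_{j}(x)f(x),\\
\mathrm{III}_{j}(x)=\!\int\! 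K(h)\big(\Delta_{2^{-j}h}1_{\R^{d}_{+}}\big)(x)\big(\Delta_{2^{-j}h}f\big)(x)\,dh,
\end{gather*}
where $\kappa_{j}(x):=\int K(h)(\Delta_{2^{-j}h}1_{\R^{d}_{+}})(x)\,dh$. The term $\mathrm{I}$ is immediate: since $|1_{\R^{d}_{+}}|\le1$, the contraction principle gives $\sup_{J}\|\sum_{j\le J}\varepsilon_{j}2^{js}\mathrm{I}_{j}\|_{L^{p}(\Omega;L^{p}(\R^{d},w;X))}\le T_{s}f\lesssim\|f\|_{H^{s}_{p}(\R^{d},w;X)}$.

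The term $\mathrm{II}$ produces the weight $w_{s,p}$. Here $\kappa_{j}$ is an explicit \emph{scalar} function depending only on $x_{1}$: it is bounded, supported in $\{|x_{1}|\lesssim 2^{-j}\}$ (with rapidly decreasing tails when $K$ is Schwartz), and, because $\int K\neq0$ (resp.\ $K=1_{[-1,1]^{d}}$), bounded below on $\{0<|x_{1}|\le c2^{-j}\}$ for at least one of the two signs of $x_{1}$. Since $\mathrm{II}_{j}(x)=\kappa_{j}(x)f(x)$ with $\kappa_{j}(x)$ scalar, factoring out the fixed vector $f(x)$ and using the scalar Khintchine inequality gives $\sup_{J}\|\sum_{j\le J}\varepsilon_{j}2^{js}\mathrm{II}_{j}\|_{L^{p}(\Omega;L^{p}(\R^{d},w;X))}\eqsim\big\|\,\big(\sum_{j}2^{2js}\kappa_{j}(x)^{2}\big)^{1/2}\|f(x)\|_{X}\big\|_{L^{p}(\R^{d},w)}$. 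Summing the geometric series, $\big(\sum_{j}2^{2js}\kappa_{j}(x)^{2}\big)^{1/2}\lesssim|x_{1}|^{-s}1_{\{|x_{1}|\lesssim 1\}}$ always and $\eqsim|x_{1}|^{-s}$ for $|x_{1}|$ small on the non-degenerate side; by the definition of $w_{s,p}$ this identifies $\mathrm{II}$, up to two-sided constants, with $\|1_{\{0<\pm x_{1}\lesssim 1\}}f\|_{L^{p}(\R^{d},w_{s,p};X)}$.

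The heart of the matter is the remainder $\mathrm{III}$, for which I would prove $\sup_{J}\|\sum_{j\le J}\varepsilon_{j}2^{js}\mathrm{III}_{j}\|_{L^{p}(\Omega;L^{p}(\R^{d},w;X))}\lesssim\|f\|_{H^{s}_{p}(\R^{d},w;X)}$. Using $|(\Delta_{2^{-j}h}1_{\R^{d}_{+}})(x)|\le1$ for every $x,h$, the contraction principle in $L^{p}(\Omega;X)$ applied pointwise in $(x,h)$, together with two applications of Minkowski's inequality, reduces this to the directional estimate
\[
\int_{\R^{d}}|K(h)|\;\sup_{J\in\N}\Big\|\sum_{j=1}^{J}\varepsilon_{j}2^{js}\Delta_{2^{-j}h}f\Big\|_{L^{p}(\Omega;L^{p}(\R^{d},w;X))}\,dh\;\lesssim\;\|f\|_{H^{s}_{p}(\R^{d},w;X)} .
\]
The inner supremum amounts to the boundedness on $L^{p}(\Omega;L^{p}(\R^{d},w;X))$ of the randomized family of Fourier multiplier operators with symbols $\xi\mapsto 2^{js}(e^{\mathrm{i} 2^{-j}h\cdot\xi}-1)$; using the square-function bound $\big(\sum_{j}|2^{js}(e^{\mathrm{i}2^{-j}h\cdot\xi}-1)|^{2}\big)^{1/2}\lesssim (|h|\,|\xi|)^{s}$ (valid since $0<s<1$) together with the randomized Littlewood--Paley decomposition, it is $\lesssim|h|^{s}\|f\|_{H^{s}_{p}(\R^{d},w;X)}$ with an implied constant independent of $h$ --- precisely the kind of $\mathcal R$-boundedness statement treated in Section~\ref{sec:R-bdd_FM}. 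As $K$ is Schwartz or compactly supported, $\int_{\R^{d}}|K(h)|\,|h|^{s}\,dh<\infty$, which closes the estimate. I expect this directional bound, and its uniformity in $w\in A_{p}$, to be the main obstacle.

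Finally the two implications follow by assembling the pieces. Suppose $1_{\R^{d}_{+}}$ is a pointwise multiplier, hence bounded, on $H^{s}_{p}(\R^{d},w;X)$. Applying $\|\cdot\|_{H^{s}_{p}}\eqsim\|\cdot\|_{L^{p}}+T_{s}$ to $1_{\R^{d}_{+}}f$ and using the bounds $\sup_{J}\|\sum\varepsilon_{j}2^{js}\mathrm{I}_{j}\|,\ \sup_{J}\|\sum\varepsilon_{j}2^{js}\mathrm{III}_{j}\|\lesssim\|f\|_{H^{s}_{p}(\R^{d},w;X)}$ we get $\sup_{J}\|\sum\varepsilon_{j}2^{js}\mathrm{II}_{j}\|\lesssim\|1_{\R^{d}_{+}}f\|_{H^{s}_{p}(\R^{d},w;X)}\lesssim\|f\|_{H^{s}_{p}(\R^{d},w;X)}$; combined with the lower bound for $\mathrm{II}$ and the easy embedding $H^{s}_{p}(\R^{d},w;X)\hookrightarrow L^{p}(\R^{d},w;X)$ (which holds since $s>0$ and $w\in A_{p}$), this yields $\|f\|_{L^{p}(\R^{d},w_{s,p};X)}\lesssim\|f\|_{H^{s}_{p}(\R^{d},w;X)}$ --- in the weighted case first only for $f$ supported on one side of the hyperplane, the full estimate then following from the reflection $x_{1}\mapsto-x_{1}$, which preserves $A_{p}$ and commutes with $\mathcal{J}_{s}=(I-\Delta)^{s/2}$. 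Choosing $f=\varphi\otimes x_{0}$ with $\varphi\in H^{s}_{p}(\R^{d},w)$ and a fixed $x_{0}\in X\setminus\{0\}$ gives \eqref{eq:thm:pointwise_multiplier;inclusion}. Conversely, assume \eqref{eq:thm:pointwise_multiplier;inclusion}. A convolution argument upgrades it to the vector-valued embedding $H^{s}_{p}(\R^{d},w;X)\hookrightarrow L^{p}(\R^{d},w_{s,p};X)$: writing $g=\mathcal{J}_{s}f$ one has $f=\mathcal{J}_{-s}g=G_{s}\ast g$ with $G_{s}$ the Bessel kernel, which for $s>0$ is a nonnegative $L^{1}$-function, so $\|f(x)\|_{X}\le(G_{s}\ast\|g(\cdot)\|_{X})(x)$ and, for $\phi\ge0$, $\|G_{s}\ast\phi\|_{L^{p}(\R^{d},w_{s,p})}=\|\mathcal{J}_{-s}\phi\|_{L^{p}(\R^{d},w_{s,p})}\lesssim\|\mathcal{J}_{s}\mathcal{J}_{-s}\phi\|_{L^{p}(\R^{d},w)}=\|\phi\|_{L^{p}(\R^{d},w)}$ by \eqref{eq:thm:pointwise_multiplier;inclusion}. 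Then applying $\|\cdot\|_{H^{s}_{p}}\eqsim\|\cdot\|_{L^{p}}+T_{s}$ to $1_{\R^{d}_{+}}f$ and inserting the bounds on $\mathrm{I},\mathrm{II},\mathrm{III}$ gives $\|1_{\R^{d}_{+}}f\|_{H^{s}_{p}(\R^{d},w;X)}\lesssim\|f\|_{L^{p}(\R^{d},w;X)}+\|f\|_{H^{s}_{p}(\R^{d},w;X)}+\|f\|_{L^{p}(\R^{d},w_{s,p};X)}\lesssim\|f\|_{H^{s}_{p}(\R^{d},w;X)}$, so $1_{\R^{d}_{+}}$ is a pointwise multiplier.
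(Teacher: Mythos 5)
Your overall strategy is the same as the paper's: pass to the randomized difference norm of Theorem~\ref{thm:intro_main_result;difference} (with $m=1$), decompose the difference quotients of $1_{\R^{d}_{+}}f$, isolate a scalar-multiplier term whose square function produces $|x_{1}|^{-s}$, handle reflections, upgrade the scalar inclusion to a vector-valued one via positivity of the Bessel kernel (as in Lemma~\ref{lemma:inclusion_scalar->vector-valued}), and extract the scalar inclusion by testing on $f=\varphi\otimes x_{0}$. The treatment of Term~II and the endgame match the paper's Steps~I--II closely (the paper streamlines Step~II by choosing a reflection-symmetric tensor kernel, but your ``one nondegenerate side plus reflection'' argument is fine). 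The converse direction is also essentially Lemma~\ref{lemma:inclusion_scalar->vector-valued}.

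The genuine gap is in Term~III, precisely at the point you yourself flag as ``the main obstacle''. The paper (Lemma~\ref{lemma:part_difference_norms_separate_ineq;lemma_pt-multiplier}) does \emph{not} pull the $h$-integral out globally. It keeps your Terms~I and~III glued together as $K_{\R^{d}_{+}}(2^{-j},f)$, and then treats the two halves of the Littlewood--Paley decomposition of $f$ differently. For the low-frequency half ($n\le 0$) it proceeds exactly as you do: contraction principle to drop the indicator, Minkowski to pull out $h$, and Lemma~\ref{lemma:fm_differences_with_support_cond} (whose $\mathcal{R}$-bound $\lesssim 2^{n}(1+|h|)^{d+3}$ is only proved for $n\le 0$). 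For the high-frequency half ($n>0$), however, it rewrites $K_{\R^{d}_{+}}(2^{-j},f_{n+j})=\tilde K_{2^{j}}*(1_{\R^{d}_{+}}f_{n+j})-\big(\int_{\{h_{1}\ge -x_{1}2^{j}\}}K\,dh\big)f_{n+j}$, so that the $K$-averaged convolution operators --- whose $\mathcal{R}$-boundedness is either assumed or obtained from Theorem~\ref{thm:R-bdd_dilations_FM;general_UMD}/Corollary~\ref{cor:thm:R-bdd_dilations_via_Mihlin-Holder} --- carry the estimate, together with the contraction principle for the scalar factors. Your Minkowski step, applied to the \emph{isolated} Term~III, undoes the $K$-averaging for all frequencies at once; the resulting ``directional estimate'' $\sup_{J}\|\sum_{j\le J}\varepsilon_{j}2^{js}\Delta_{2^{-j}h}f\|_{X_{p,w}}\lesssim c(h)\|f\|_{H^{s}_{p}}$ then requires, for the $n>0$ pieces, $\mathcal{R}$-boundedness of the raw translation family $\{L_{2^{-j}h}\}_{j}$ on $L^{p}(\R^{d},w;X)$ restricted to Fourier annuli of width $2^{n+j}$. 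The symbols $e^{\mathrm{i}2^{-j}h\cdot\xi}$ are not uniformly Mihlin (the bound grows like $(|h|2^{n})^{d+3}$), so none of the results in Section~\ref{sec:R-bdd_FM} apply, and $\mathcal{R}$-boundedness of translations is a cotype-sensitive property that is not available for general UMD $X$. Your appeal to the pointwise symbol square-function bound $(\sum_{j}|2^{js}(e^{\mathrm{i}2^{-j}h\cdot\xi}-1)|^{2})^{1/2}\lesssim(|h||\xi|)^{s}$ together with the randomized Littlewood--Paley decomposition does not close this: outside the Hilbert case, a scalar square-function estimate on the symbols does not transfer to a randomized operator bound --- that transfer is exactly what the $\mathcal{R}$-boundedness machinery of Section~\ref{sec:R-bdd_FM} is there to provide, and it only provides it for the $K$-smoothed family. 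To repair the argument you should not separate Terms~I and~III, but instead observe that on $\R^{d}_{+}$ one has $\mathrm{III}_{j}=-\tilde K_{2^{j}}*(1_{\R^{d}_{-}}f)+\kappa_{j}f$, so Term~III itself splits into a $K$-averaged convolution (applied to $1_{\R^{d}_{-}}f$, handled by $\mathcal{R}$-boundedness plus the contraction principle) and another copy of the scalar-multiplier term; this is, in essence, what Lemma~\ref{lemma:part_difference_norms_separate_ineq;lemma_pt-multiplier} does.
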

In Section~\ref{subsec:closer_look_inclusion} we will take a closer look at the inclusion \eqref{eq:thm:pointwise_multiplier;inclusion}.
Based on embedding results from \cite{Meyries&Veraar_char_class_embeddings}, we will give explicit conditions (in terms of the weight and the parameters) for which this inclusion holds true. The important class of power weights \eqref{eq:intro:power_weight} is considered in Example~\ref{ex:voorbeelden_inclusie}.

In the situation of the above theorem, let $\bar{w}_{s,p}$ be the weight on $\R \times \R^{d-1}$ defined by $\bar{w}_{s,p}(x_{1},x') := |x_{1}|^{-sp}w(x_{1},x')$. Note that, in view of the inclusion $H^{s}_{p}(\R^{d},w) \hookrightarrow L^{p}(\R^{d},w)$, the inclusion \eqref{eq:thm:pointwise_multiplier;inclusion} is equivalent to the inclusion
\[
H^{s}_{p}(\R^{d},w) \hookrightarrow L^{p}(\R^{d},\bar{w}_{s,p}).
\]
%When thought of as an inequality, in the unweighted case the last inclusion is Hardy's inequality (with respect to the hyperplane $\{0\} \times \R^{d-1}$) for Bessel potential spaces (cf. \cite[Section~I.5]{Triebel_Structure_of_functions}).
In the unweighted scalar-valued case, the above theorem thus corresponds to a result of Triebel \cite[Section~2.8.6]{Triebel_Theory_of_FS_I} with $q=2$, which states that the multiplier property for $F^{s}_{p,q}(\R^{d})$ (recall \eqref{eq:intro:classical_LP-decomp_H-space}) is equivalent to the inequality
\[
\norm{x \mapsto |x_{1}|^{s}f(x)}_{L^{p}(\R^{d})} \lesssim \norm{f}_{F^{s}_{p,q}(\R^{d})},\quad\quad f \in F^{s}_{p,q}(\R^{d}).
\]
Similarly to Strichartz \cite{Strichartz_multipliers_fractional_Sobolev}, who used \eqref{eq:intro:difference_norm_scalar-case} to prove that $1_{\R^{d}_{+}}$ acts a pointwise multiplier on $H^{s}_{p}(\R^{d})$ in the parameter range
\[
-\frac{1}{p'} < s < \frac{1}{p}, \quad\quad \mbox{where}\quad \frac{1}{p}+\frac{1}{p'}=1,\footnote{This result is originally due to Shamir \cite{Shamir_pointwise_multiplier}. However, Strichartz \cite{Strichartz_multipliers_fractional_Sobolev} in fact obtained this result as a corollary to a more general pointwise multiplication result (in combination with a Fubini type theorem for Bessel potential spaces).}
\]
Triebel used a difference norm characterization in his proof.
Our proof is closely related to the proof of Triebel \cite[Section~2.8.6]{Triebel_Theory_of_FS_I}.

An alternative approach to pointwise multiplication is via the paraproduct technique (cf.\ e.g.\ the monograph of Runst and Sickel \cite{Runst&Sickel_monograph} for the unweighted scalar-valued setting).
Based on a randomized Littlewood-Paley decomposition, Meyries and Veraar \cite{Meyries&Veraar_pt-multiplication} followed such an approach to extend the classical result of Shamir \cite{Shamir_pointwise_multiplier} and Strichartz \cite{Strichartz_multipliers_fractional_Sobolev} to the weighted vector-valued case.
They in fact proved a more general pointwise multiplication result
for the important class of power weights $w_{\gamma}$ \eqref{eq:intro:power_weight}, $\gamma \in (-1,p-1)$,  in the UMD setting, from which the case of the characteristic function $1_{\R^{d}_{+}}$ can be derived.
Their main result \cite[Theorem~1.1]{Meyries&Veraar_pt-multiplication} says that, given a UMD Banach space $X$, $p \in (1,\infty)$ and $\gamma \in (-1,p-1)$, $1_{\R^{d}_{+}}$ is a pointwise multiplier on $H^{s}_{p}(\R^{d},w_{\gamma};X)$ in the parameter range
\[
-\frac{1+\gamma'}{p'} < s < \frac{1+\gamma}{p}, \quad\quad \mbox{where}\quad \frac{1}{p}+\frac{1}{p'}=1,\,\gamma' = -\frac{\gamma}{p-1}.
\]
For positive smoothness $s \geq 0$ this pointwise multiplication result is contained in Example~\ref{ex:voorbeelden_inclusie}, from which the case of negative smoothness $s \leq 0$ can be derived via duality.

The paper is organized as follows. Section~\ref{sec:prerequisites} is devoted to the necessary preliminaries. In Section~\ref{sec:R-bdd_FM} we treat $\mathcal{R}$-boundedness results for Fourier multiplier operators on $L^{p}(\R^{d},w;X)$. The results from this section form (together with a randomized Littlewood-Paley decomposition) the main tools for this paper, but are also of independent interest. In Section~\ref{sec:difference_norms} we state and prove the main result of this paper, Theorem~\ref{thm:difference_norms_separate_ineq_'sharp'}, from which Theorem~\ref{thm:intro_main_result;difference} can be obtained as a consequence. Finally, in Section~\ref{sec:pointwise-multiplier} we use difference norms to prove the pointwise multiplier Theorem~\ref{thm:pointwise_multiplier}, and we also take a closer look at the inclusion \eqref{eq:thm:pointwise_multiplier;inclusion} from this theorem.

\textbf{Notations and conventions.} All vector spaces are over the field of complex scalars $\C$.
$|A|$ denotes the Lebesgue measure of Borel set $A \subset \R^{d}$.
Given a measure space $(X,\mathscr{A},\mu)$, for $A \in \mathscr{A}$ with $\mu(A) \in (0,\infty)$ we write
\[
\fint_{A}d\mu = \frac{1}{\mu(A)}\int_{A}d\mu.
\]
For a function $f:\R^{d} \longra X$, with $X$ some vector space, we write $\tilde{f}(x)=f(-x)$ and, unless otherwise stated, $f_{t}(x) = t^{d}f(tx)$ for every $x \in \R^{d}$ and $t>0$. Given a Banach space $X$, we denote by $L^{0}(\R^{d};X)$ the space of equivalence classes of Lebesgue strongly measurable $X$-valued functions on $\R^{d}$. For $x \in \R^{d}$ and $r>0$ we write $Q[x,r] = x + [-r,r]^{d}$ for the cube centered at $x$ with side length $2r$.

\section{Prerequisites}\label{sec:prerequisites}

\subsection{UMD Spaces and Randomization}\label{subsec:sec:prereq:UMD&randomization}

The general references for this subsection are \cite{boek,boek2,Kunstmann&Weis_lecture_notes}.

A Banach space $X$ is called a UMD space if for any probability space $(\Omega,\mathcal{F},\Prob)$ and $p \in (1,\infty)$ it holds true that martingale differences are unconditional in $L^{p}(\Omega;X)$ (see \cite{Burkholder_martingales_and_singular_integrals_Banach_spaces,Rubio_de_Francia_survey_UMD} for a survey on the subject).
It is a deep result due to Bourgain and Burkholder that a Banach space $X$ has UMD if and only if it is of class $\mathcal{HT}$, i.e.\ the Hilbert transform has a bounded extension to $L^{p}(\R;X)$ for any/some $p \in (1,\infty)$. Examples of Banach spaces with the UMD property include all Hilbert spaces and all $L^{q}$-spaces with $q \in (1,\infty)$.

Throughout this paper, we fix a \emph{Rademacher sequence} $\{\varepsilon_{j}\}_{j \in \Z}$ on some probability space $(\Omega,\mathcal{F},\Prob)$, i.e. a sequence of independent symmetric $\{-1,1\}$-valued random variables on $(\Omega,\mathcal{F},\Prob)$. If necessary, we denote by $\{\varepsilon'_{j}\}_{j \in \Z}$ a second Rademacher sequence on some probability space $(\Omega',\mathcal{F}',\Prob')$ which is independent of the first.

Let $X$ be a Banach function space with finite cotype and let $p \in [1,\infty)$.\footnote{A Banach space $X$ has cotype $q \in [2,\infty]$ if $\left(\sum_{j=0}^{n}\norm{x_{j}}^{q}\right)^{1/q} \lesssim \norm{\sum_{j=0}^{n}\varepsilon_{j}x_{j}}_{L^{2}(\Omega;X)}$ for all $x_{0},\ldots,x_{n} \in X$.  We say that $X$ has finite cotype if it has cotype $q \in [2,\infty)$. The cotype of $L^{p}$ is the maximum of $2$ and $p$. Every UMD space has finite cotype.}
The Khinthchine-Maurey theorem says that, for all $x_{0},\ldots,x_{n} \in X$,
\begin{equation}\label{eq:Khintchine_Maurey}
\normB{\Big(\sum_{j=0}^{n}|x_{j}|^{2} \Big)^{1/2}}_{X} \eqsim \normB{\sum_{j=0}^{n}\varepsilon_{j}x_{j}}_{L^{p}(\Omega;X)}.
\end{equation}
In the special case $E=L^{q}(S)$ ($q \in [1,\infty)$) this easily follows from a combination of Fubini and the Kahane-Khintchine inequality. Morally, \eqref{eq:Khintchine_Maurey} means that square function estimates are equivalent to estimates for Rademacher sums.

The classical Littlewood-Paley inequality gives a two-sided estimate for the $L^{p}$-norm of a scalar-valued function by the $L^{p}$-norm of the square function corresponding to its dyadic spectral decomposition.
This classical inequality has a UMD Banach space-valued version, due to Bourgain \cite{Bourgain_vector-valued_singular_integrals} and McConnell \cite{McConnell}, in which the square function is replaced by a Rademacher sum (as in \eqref{eq:Khintchine_Maurey}; see the survey paper \cite{Hytonen_survey-paper_vector-valued}).
One of the main ingredients of this paper is a similar inequality for Bessel potential spaces, namely the randomized Littlewood-Paley decomposition \eqref{eq:randomized-LP-decomp}.

Let $X$ be a Banach space and $p \in [1,\infty]$. As a special case of the (Kahane) contraction principle, for all $x_{0},\ldots,x_{n} \in X$ and $a_{0},\ldots,a_{n} \in \C$ it holds that
\begin{equation}\label{eq:contraction_principle}
\normB{\sum_{j=0}^{n}a_{j}\varepsilon_{j}x_{j}}_{L^{p}(\Omega;X)} \leq 2|a|_{\infty}\normB{\sum_{j=0}^{n}\varepsilon_{j}x_{j}}_{L^{p}(\Omega;X)}.
\end{equation}

A family of operators $\mathcal{T} \subset \mathcal{B}(X)$ on a Banach space $X$ is called \emph{$\mathcal{R}$-bounded} if there exists a constant $C \geq 0$ such that for all $T_{0},\ldots,T_{N} \in \mathcal{T}$ and $x_{0},\ldots,x_{N} \in X$ it holds that
\begin{equation}\label{eq:R-bdd_def}
\normB{\sum_{j=0}^{N}\varepsilon_{j}T_{j}x_{j}}_{L^{2}(\Omega;X)} \leq C\,\normB{\sum_{j=0}^{N}\varepsilon_{j}x_{j}}_{L^{2}(\Omega;X)}.
\end{equation}
The moments of order $2$ above may be replaced by moments of any order $p$. The resulting least admissible constant is denoted by $\mathcal{R}_{p}(\mathcal{T})$. In the definition of $\mathcal{R}$-boundedness it actually suffices to check \eqref{eq:R-bdd_def} for distinct operators $T_{0},\ldots,T_{N} \in \mathcal{T}$.

A Banach space $X$ is said to have \emph{Pisier's contraction property} or \emph{property $(\alpha)$} if the contraction principle holds true for double  Rademacher sums (for some extra fixed multiplicative constant); see \cite[Definition~4.9]{Kunstmann&Weis_lecture_notes} for the precise definition. Every space $L^{p}$ with $p \in [1,\infty)$ enjoys property $(\alpha)$. Further examples are UMD Banach function spaces. However, the Schatten von Neumann class $\mathfrak{S}_{p}$ enjoys property $(\alpha)$ if and only if $p=2$.

A Banach space $X$ is said to have the \emph{triangular contraction property} or \emph{property ($\Delta$)} if there exists a constant $C \geq 0$ such that for all $\{x_{i,j}\}_{i,j=0}^{n} \subset X$
\[
\normB{ \sum_{0 \leq j \leq i \leq n}\varepsilon_{i}\varepsilon'_{j}x_{i,j} }_{L^{2}(\Omega \times \Omega';X)}
\leq C\,\normB{ \sum_{i,j=0}^{n}\varepsilon_{i}\varepsilon'_{j}x_{i,j} }_{L^{2}(\Omega \times \Omega';X)};
\]
see \cite{Kalton&Weis_sums_of_closed_operators}.
The moments of order $2$ above may be replaced by moments of any order $p$. The resulting least admissible constant is denoted by $\Delta_{p,X}$. Every space with Pisier's contraction property trivially has the triangular contraction property. For vector-valued $L^{p}$-spaces we have $\Delta_{p,L^{p}(S;X)} = \Delta_{p,X}$. Furthermore, every UMD space has the triangular contraction property.

Let $X$ be a Banach space.
The space $\mathrm{Rad}(X)$ is the linear space consisting of all sequences $\{x_{j}\}_{j} \subset X$ for which $\sum_{j \in \N}\varepsilon_{j}x_{j}$ defines a convergent series in $L^{2}(\Omega;X)$. It becomes a Banach space under the norm $\norm{\{x_{j}\}_{j}}_{\mathrm{Rad}(X)} := \norm{\sum_{j \in \N}\varepsilon_{j}x_{j}}_{L^{2}(\Omega;X)}$; see \cite{boek2,Kaip&Saal,Kunstmann&Weis_lecture_notes}.

\subsection{Muckenhoupt Weights}\label{subsec:prelim:weights}

In this subsection the general reference is \cite{Grafakos_modern}.

A \emph{weight} is a positive measurable function on $\R^{d}$ that takes it values almost everywhere in $(0,\infty)$.
Let $w$ be a weight on $\R^{d}$. We write $w(A) = \int_{A}w(x)\,dx$ when $A$ is Borel measurable set in $\R^{d}$.
Furthermore, given a Banach space $X$ and $p \in [1,\infty)$, we define the weighted Lebesgue-Bochner space $L^{p}(\R^{d},w;X)$ as the Banach space of all $f \in L^{0}(\R^{d};X)$ for which
\[
\norm{f}_{L^{p}(\R^{d},w;X)} := \left( \int_{\R^{d}}\norm{f(x)}_{X}^{p}\,w(x)\,dx \right)^{1/p} < \infty.
\]

For $p \in [1,\infty]$ we denote by $A_{p}=A_{p}(\R^{d})$ the class of all Muckenhaupt $A_{p}$-weights, which are all the locally integrable weights for which the $A_{p}$-characteristic $[w]_{A_{p}} \in [1,\infty]$ is finite; see \cite[Chapter~9]{Grafakos_modern} for more details.
Let us recall the following facts:
\begin{itemize}
\item $A_{\infty} = \bigcup_{p \in (1,\infty)}A_{p}$, which often also taken as definition;
\item For $p \in (1,\infty)$ and a weight $w$ on $\R^{d}$: $w \in A_{p}$ if and only if $w^{-\frac{1}{p-1}} \in A_{p'}$, where $\frac{1}{p}+\frac{1}{p'}=1$;
\item For a weight $w$ on $\R^{d}$ and $\lambda>0$: $[w(\lambda\,\cdot\,)]_{A_{p}} = [w]_{A_{p}}$;
\item For $p \in [1,\infty)$ and $w \in A_{\infty}(\R^{d})$: $\mathcal{S}(\R^{d}) \stackrel{d}{\hookrightarrow} L^{p}(\R^{d},w)$;
\item The Hardy-Littlewood maximal operator $M$ is bounded on $L^{p}(\R^{d},w)$ if (and only if) $w \in A_{p}$.
\end{itemize}

An example of an $A_{\infty}$-weight is the power weight $w_{\gamma}$ \eqref{eq:intro:power_weight} for $\gamma > -1$. Given $p \in (1,\infty)$, we have $w_{\gamma} \in A_{p}$ if and only if $\gamma \in (-1,p-1)$. Also see \eqref{eq:example_power_weight} for a slight variation.

A function $f:\R^{d} \longra \R$ is called \emph{radially decreasing} if it is of the form $f(x)=g(|x|)$ for some decreasing function $g:\R \longra \R$.
We define $\mathscr{K}(\R^{d})$ as the space of all $k \in L^{1}(\R^{d})$ having a radially decreasing integrable majorant, i.e., all $k \in L^{1}(\R^{d})$ for which there exists a radially decreasing $\psi \in L^{1}(\R^{d})^{+}$ with $|k| \leq \psi$. Equipped with the norm
\[
\norm{k}_{\mathscr{K}(\R^{d})} := \inf\left\{ \norm{\psi}_{L^{1}(\R^{d})} : \psi \in L^{1}(\R^{d})^{+} \:\mbox{radially decreasing}, |k| \leq \psi \right\},
\]
$\mathscr{K}(\R^{d})$ becomes a Banach space. Note that, given $k \in \mathscr{K}(\R^{d})$ and $t > 0$, we have $k_{t} = t^{d}k(t\,\cdot\,) \in \mathscr{K}(\R^{d})$ with $\norm{k_{t}}_{\mathscr{K}(\R^{d})} = \norm{k}_{\mathscr{K}(\R^{d})}$.

Let $X$ be a Banach space. For $k \in \mathscr{K}(\R^{d})$ we have the pointwise estimate
\begin{equation*}\label{eq:estimate_conv_radicall_decr}
\int_{\R^{d}}|k(x-y)|\,\norm{f(y)}_{X}\,dy \leq \norm{k}_{\mathscr{K}(\R^{d})}M(\norm{f}_{X})(x),\quad\quad f \in L^{1}_{loc}(\R^{d};X), x \in \R^{d}.
\end{equation*}
As a consequence, if $p \in (1,\infty)$ and $w \in A_{p}(\R^{d})$, then $k$ gives rise to a well-defined bounded convolution operator $k*: f \mapsto k*f$ on $L^{p}(\R^{d},w;X)$, given by the formula
\[
k*f(x) = \int_{\R^{d}}k(x-y) f(y)\,dy, \quad\quad x \in \R^{d},
\]
for which we have the norm estimate $\norm{k*}_{\mathcal{B}(L^{p}(\R^{d},w;X))} \lesssim_{p,d,w} \norm{k}_{\mathscr{K}(\R^{d})}$.

\subsection{Function Spaces}\label{subsec:sec:prereq:function_spaces}

As general reference to the theory of vector-valued distributions we mention \cite{Amann_distributions} (and \cite[Section~III.4]{Amann_Lin_and_quasilinear_parabolic}). For vector-valued function spaces we refer to \cite{boek,S&S_jena-notes} (unweighted setting) and \cite{Meyries&Veraar_pt-multiplication} (weighted setting) and the references given therein.

Let $X$ be a Banach space.
The space of $X$-valued tempered distributions $\mathcal{S}'(\R^{d};X)$ is defined as $\mathcal{S}'(\R^{d};X) := \mathcal{L}(\mathcal{S}(\R^{d}),X)$, the space of continuous linear operators from $\mathcal{S}(\R^{d})$ to $X$, equipped with the locally convex topology of bounded convergence.
Standard operators (derivative operators, Fourier transform, convolution, etc.) on $\mathcal{S}'(\R^{d};X)$ can be defined as in the scalar-case, cf. \cite[Section~III.4]{Amann_Lin_and_quasilinear_parabolic}.

Let $p \in (1,\infty)$ and $w \in A_{p}(\R^{d})$. Then $w^{1-p'} = w^{-\frac{1}{p-1}} \in A_{p'}$, so that $\mathcal{S}(\R^{d}) \stackrel{d}{\hookrightarrow} L^{p'}(\R^{d},w^{1-p'})$. By H\"older's inequality we find that $L^{p}(\R^{d},w;X) \hookrightarrow \mathcal{S}'(\R^{d};X)$ in the natural way.
For each $s \in \R$ we can thus define the Bessel potential space $H^{s}_{p}(\R^{d},w;X)$ as the space of all $f \in \mathcal{S}'(\R^{d};X)$
for which $\mathcal{J}_{s}f \in L^{p}(\R^{d},w;X)$, equipped with the norm $\norm{f}_{H^{s}_{p}(\R^{d},w;X)} := \norm{\mathcal{J}_{s}f}_{L^{p}(\R^{d},w;X)}$; here $\mathcal{J}_{s} \in \mathcal{L}(\mathcal{S}'(\R^{d};X))$ is the Bessel potential operator given by
\[
\mathcal{J}_{s}f := \mathscr{F}^{-1}[(1+|\,\cdot\,|^{2})^{s/2}\hat{f}], \quad\quad f \in \mathcal{S}'(\R^{d};X).
\]
Furthermore, for each $n \in \N$ we can define the Sobolev space $W^{n}_{p}(\R^{d},w;X)$ as the space of all $f \in \mathcal{S}'(\R^{d};X)$ for which $\partial^{\alpha}f \in L^{p}(\R^{d},w;X)$ for every $|\alpha| \leq n$, equipped with the norm $\norm{f}_{W^{n}_{p}(\R^{d},w;X)}:=\sum_{|\alpha| \leq n}\norm{\partial^{\alpha}f}_{L^{p}(\R^{d},w;X)}$.
Note that $H^{0}_{p}(\R^{d},w;X) = L^{p}(\R^{d},w;X) = W^{0}_{p}(\R^{d},w;X)$. If $X$ is a UMD space, then we have $H^{n}_{p}(\R^{d},w;X) = W^{n}_{p}(\R^{d},w;X)$. In the reverse direction we have that if $H^{1}_{p}(\R;X) = W^{1}_{p}(\R;X)$, then $X$ is a UMD space (see \cite{boek}).

For $0 < A < B < \infty$ we define $\Phi_{A,B}(\R^{d})$ as the set of all sequences $\varphi = (\varphi_{n})_{n \in \N} \subset \mathcal{S}(\R^{d};X)$ which can be constructed in the following way: given $\varphi_{0} \in \mathcal{S}(\R^{d})$ with
\[
0 \leq \hat{\varphi} \leq 1, \:\: \hat{\varphi}(\xi) = 1 \:\:\mbox{if $|\xi| \leq A$}, \:\:\hat{\varphi}(\xi) = 0 \:\:\mbox{if $|\xi| \geq B$},
\]
$(\varphi_{n})_{n \geq 1}$ is determined by
\[
\hat{\varphi}_{n} = \hat{\varphi}_{1}(2^{-n+1}\,\cdot\,) = \hat{\varphi}_{0}(2^{-n}\,\cdot\,) - \hat{\varphi}_{0}(2^{-n+1}\,\cdot\,), \quad\quad n \geq 1.
\]
Observe that
\begin{equation}\label{eq:Fourier_support_LP-seq}
\supp \hat{\varphi}_{0} \subset \{ \xi : |\xi| \leq B \} \quad \mbox{and} \quad \supp \hat{\varphi}_{n} \subset \{ \xi : 2^{n-1}A \leq |\xi| \leq 2^{n}B \}, \, n \geq 1.
\end{equation}
We furthermore put $\Phi(\R^{d}) := \bigcup_{0<A<B<\infty}\Phi_{A,B}(\R^{d})$.

Let $\varphi = (\varphi_{n})_{n \in \N} \in \Phi(\R^{d})$. We define the operators $\{S_{n}\}_{n \in \N} \subset  \mathcal{L}(\mathcal{S}'(\R^{d};X),\mathscr{O}_{M}(\R^{d};X))$ by
\[
S_{n}f := \varphi_{n}*f = \mathscr{F}^{-1}[\hat{\varphi}_{n}\hat{f}], \quad\quad f \in \mathcal{S}'(\R^{d};X),
\]
where $\mathscr{O}_{M}(\R^{d};X)$ stands for the space of all $X$-valued slowly increasing smooth functions on $\R^{d}$.
Given $s \in \R$, $p \in [1,\infty)$, $q \in [1,\infty]$ and $w \in A_{\infty}(\R^{d})$, the Triebel-Lizorkin space $F^{s}_{p,q}(\R^{d},w;X)$ is defined as the space of all $f \in \mathcal{S}'(\R^{d};X)$ for which
\[
\norm{f}_{F^{s}_{p,q}(\R^{d},w;X)} := \norm{ (2^{sn}S_{n}f)_{n \in \N} }_{L^{p}(\R^{d},w)[\ell^{q}(\N)](X)} < \infty.
\]
Each choice of $\varphi \in \Phi(\R^{d})$ leads to an equivalent extended norm on $\mathcal{S}'(\R^{d};X)$.

The $H$-spaces are related to the $F$-spaces as follows. In the scalar-valued case $X=\C$, we have
\begin{equation}\label{eq:identity_Bessel-Potential_Triebel-Lizorkin;scalar-valued}
H^{s}_{p}(\R^{d},w) = F^{s}_{p,2}(\R^{d},w), \quad\quad p \in (1,\infty), w \in A_{p}.
\end{equation}
In the unweighted vector-valued case, this identity is valid if and only if $X$ is isomorphic to a Hilbert space.
For general Banach spaces $X$ we still have (see \cite[Proposition~3.12]{Meyries&Veraar_sharp_embd_power_weights})
\begin{equation}\label{eq:relation_Bessel-Potential_Triebel-Lizorkin;Ap}
F^{s}_{p,1}(\R^{d},w;X) \hookrightarrow H^{s}_{p}(\R^{d},w;X) \hookrightarrow F^{s}_{p,\infty}(\R^{d},w;X),
\quad\quad p \in (1,\infty),w \in A_{p}(\R^{d}),
\end{equation}
and
\begin{equation}\label{eq:triebel_into_Lp;A_oneindig}
\left( \mathcal{S}(\R^{d};X),\norm{\,\cdot\,}_{F^{s}_{p,1}(\R^{d},w;X)} \right) \hookrightarrow L^{p}(\R^{d},w;X),
\quad\quad p \in [1,\infty), w \in A_{\infty}.
\end{equation}
For UMD spaces $X$ there is a suitable randomized substitute for \eqref{eq:identity_Bessel-Potential_Triebel-Lizorkin;scalar-valued}:
if $p \in (1,\infty)$ and $w \in A_{p}$, then (see \cite[Proposition~3.2]{Meyries&Veraar_pt-multiplication})
\begin{equation}\label{eq:randomized-LP-decomp}
\norm{f}_{H^{s}_{p}(\R^{d},w;X)} \eqsim \sup_{N \in \N}\normB{\sum_{n=0}^{N}\varepsilon_{n}2^{ns}S_{n}f}_{L^{p}(\Omega;L^{p}(\R^{d},w;X))}, \quad\quad f \in \mathcal{S}'(\R^{d};X).
\end{equation}
Moreover, the implicit constants in \eqref{eq:randomized-LP-decomp} can be taken of the form $C=C_{X,p,d,s}([w]_{A_{p}})$ for some increasing function $C_{X,p,d,s}:[1,\infty) \longra (0,\infty)$ only depending on $X$, $p$, $d$ and $s$.

\subsection{Fourier Multipliers}

Let $X$ be a Banach space. We write $\widehat{L^{1}}(\R^{d};X) := \mathscr{F}^{-1}L^{1}(\R^{d};X) \subset \mathcal{S}'(\R^{d};X)$. For a symbol $m \in L^{\infty}(\R^{d})$ we define the operator $T_{m}$ by
\[
T_{m}: \widehat{L^{1}}(\R^{d};X) \longra \widehat{L^{1}}(\R^{d};X),\, f \mapsto \mathscr{F}^{-1}[m\hat{f}].
\]
Given $p \in [1,\infty)$ and  $w \in A_{\infty}(\R^{d})$, we call $m$ a \emph{Fourier multiplier} on $L^{p}(\R^{d},w;X)$ if $T_{m}$ restricts to an operator on $\widehat{L^{1}}(\R^{d};X) \cap L^{p}(\R^{d},w;X)$ which is bounded with respect to the $L^{p}(\R^{d},w;X)$-norm. In this case $T_{m}$ has a unique extension to a bounded linear operator on $L^{p}(\R^{d},w;X)$ due to the denseness of
$\mathcal{S}(\R^{d};X)$ in $L^{p}(\R^{d},w;X)$, which we still denote by $T_{m}$.
We denote by $\mathcal{M}_{p,w}(X)$ the set of all Fourier multipliers $m \in L^{\infty}(\R^{d})$ on $L^{p}(\R^{d},w;X)$.
Equipped with the norm $\norm{m}_{\mathcal{M}_{p,w}(X)} := \norm{T_{m}}_{\mathcal{B}(L^{p}(\R^{d},w;X))}$, $\mathcal{M}_{p,w}(X)$ becomes a Banach algebra (under the natural pointwise operations) for which the natural inclusion $\mathcal{M}_{p,w}(X) \hookrightarrow \mathcal{B}(L^{p}(\R^{d},w;X)$ is an isometric Banach algebra homomorphism; see \cite{Kunstmann&Weis_lecture_notes} for the unweighted setting.

For each $N \in \N$ we define $\mathscr{M}_{N}(\R^{d})$ as the space of all $m \in C^{N}(\R^{d} \setminus \{0\})$ for which
\[
\norm{m}_{\mathscr{M}_{N}} = \norm{m}_{\mathscr{M}_{N}(\R^{d})}  :=  \sup_{|\alpha| \leq N}\sup_{\xi \neq 0}|\xi|^{|\alpha|}|D^{\alpha}m(\xi)| < \infty.
\]
If $X$ is a UMD Banach space, $p \in (1,\infty)$ and $w \in A_{p}(\R^{d})$, then we have $\mathscr{M}_{d+2}(\R^{d}) \hookrightarrow \mathcal{M}_{p,w}(X)$ with norm $\leq C_{X,p,d}([w]_{A_{p}})$, where $C_{X,p,d}:[1,\infty) \longra (0,\infty)$ is some increasing function only depending on $X$, $d$ and $p$; see \cite[Proposition~3.1]{Meyries&Veraar_pt-multiplication}.

\section{$\mathcal{R}$-Boundedness of Fourier Multipliers}\label{sec:R-bdd_FM}

At several points in the proof of the randomized difference norm characterization from Theorem~\ref{thm:intro_main_result;difference} we need the
$\mathcal{R}$-boundedness of a sequence of Fourier multiplier operators on $L^{p}(\R^{d},w;X)$.
In this section we provide the necessary $\mathcal{R}$-boundedness results.

In many situations, the $\mathcal{R}$-boundedness of a family of operators is proved under the assumption of property $(\alpha)$ (see e.g. \cite{Clement&Pagter&Suckochev&Witvliet_Schauder_decompositions,Girardi&Weis_criteria_R-boundedness,Kunstmann&Weis_lecture_notes,Venni_R-boundedness}).
Concerning operator families on $L^{p}(\T^{d};X)$ or $L^{p}(\R^{d};X)$, the necessity of property~$(\alpha)$ for a number of conclusions of this kind is proved in \cite{Hytonen&Weis_necessity_alpha}.
For example, in the the setting of Fourier multipliers it holds true that every uniform set of Marcinkiewicz multipliers on $\R^{d}$ is $\mathcal{R}$-bounded on $L^{p}(\R^{d};X)$ if and only if $X$ is a UMD space with property~$(\alpha)$. In particular, given a UMD space $X$, in the one-dimensional case $d=1$ one has that $\mathscr{M}_{1}(\R) \hookrightarrow \mathcal{M}_{p,\mathbf{1}_{
\R}}(X)$ maps bounded sets to $\mathcal{R}$-bounded sets if and only if $X$ has property~$(\alpha)$.
Regarding the sufficiency of property $(\alpha)$ for the $\mathcal{R}$-boundedness of Fourier multipliers, in the weighted setting we have:

\begin{prop}\label{prop:R-bdd_FM_alpha}
Let $X$ be a UMD space with property $(\alpha)$ and $p \in (1,\infty)$.
\begin{itemize}
\item[(i)] For all weights $w \in A_{p}(\R^{d})$, $\mathscr{M}_{d+2}(\R^{d}) \hookrightarrow \mathcal{M}_{p,w}(X)$ maps bounded sets to $\mathcal{R}$-bounded sets.
\item[(ii)] Let $w \in A_{p}^{rec}(\R^{d})$, i.e. $w$ is a locally integrable weight on $\R^{d}$ which is uniformly $A_{p}$ in each of the coordinates separately; see \cite{Kurtz_weighted_fm}. Write $\R^{d}_{*} = [\R \setminus \{0\}]^{d}$.
    If $\mathscr{M} \subset L^{\infty}(\R^{d}) \cap C^{d}(\R^{d}_{*})$ satisfies
\[
C_{\mathscr{M}} := \sup_{M \in \mathscr{M}}\sup_{\alpha \leq \mathbf{1}}\sup_{ \xi \in \R^{d}_{*}} |\xi^{\alpha}|\,|D^{\alpha}m(\xi)| < \infty,
\]
then $\mathscr{M}$ defines an $\mathcal{R}$-bounded collection of Fourier multiplier operators $\mathcal{T}_{\mathscr{M}} = \{ T_{M} : M \in \mathscr{M} \}$ in $\mathcal{B}(L^{p}(\R^{d},w;X))$ with $\mathcal{R}(\mathcal{T}_{\mathscr{M}}) \lesssim_{X,p,d,w} C_{\mathscr{M}}$.
\end{itemize}
\end{prop}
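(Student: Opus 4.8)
The plan is to prove both parts by transference/extrapolation arguments that reduce the $\mathcal{R}$-boundedness statements to known \emph{scalar} weighted Fourier multiplier theorems combined with the randomized Littlewood--Paley machinery and property~$(\alpha)$. For part (i), the key observation is that an $\mathcal{R}$-bounded family indexed by a set of symbols in $\mathscr{M}_{d+2}(\R^d)$ can be encoded as a \emph{single} operator-valued Fourier multiplier: given finitely many symbols $m_1,\dots,m_N$ with $\sup_k \norm{m_k}_{\mathscr{M}_{d+2}} \leq C$, form the $\mathrm{Rad}(X)$-valued symbol $\xi \mapsto (m_k(\xi))_{k=1}^N$ acting diagonally, and check that it lies in $\mathscr{M}_{d+2}(\R^d;\mathcal{B}(\mathrm{Rad}(X)))$ with norm $\lesssim C$ because the diagonal blocks satisfy the derivative bounds uniformly. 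Since $\mathrm{Rad}(X)$ is again a UMD space with property~$(\alpha)$ (this is where property~$(\alpha)$ is essential — it is exactly the stability of these randomization spaces under the relevant operator-valued multiplier theorem), one applies the operator-valued weighted Mihlin theorem for UMD-valued $L^p(\R^d,w;\,\cdot\,)$. The natural references here are the weighted operator-valued Mihlin multiplier theorem (the weighted analogue of \cite{Weis_operator-valued_FM_and_max-reg}, available for $A_p$-weights; cf.\ the discussion and the scalar-weighted statement $\mathscr{M}_{d+2}(\R^d) \hookrightarrow \mathcal{M}_{p,w}(X)$ recalled from \cite[Proposition~3.1]{Meyries&Veraar_pt-multiplication}, which lifts to the operator-valued setting under property~$(\alpha)$). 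Unwinding the definition of the $\mathrm{Rad}(X)$-norm then gives precisely \eqref{eq:R-bdd_def} with a constant controlled by $C$ times the multiplier norm, uniformly in $N$, which is the assertion.

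For part (ii) the strategy is the same in spirit but now one must exploit the iterated/rectangular structure of $A_p^{rec}(\R^d)$ and the Marcinkiewicz-type (mixed-derivative) hypothesis on $\mathscr{M}$. The relevant scalar fact is Kurtz's weighted Marcinkiewicz multiplier theorem \cite{Kurtz_weighted_fm}: a symbol satisfying $\sup_{\alpha \leq \mathbf{1}}\sup_{\xi \in \R^d_*}|\xi^\alpha|\,|D^\alpha m(\xi)| < \infty$ is a Fourier multiplier on $L^p(\R^d,w)$ for every $w \in A_p^{rec}(\R^d)$, with norm controlled by that quantity. To get $\mathcal{R}$-boundedness of the family $\mathcal{T}_{\mathscr{M}}$, I would again pass to $\mathrm{Rad}(X)$: property~$(\alpha)$ lets one upgrade the scalar-weighted Marcinkiewicz theorem to an $\mathcal{B}(\mathrm{Rad}(X))$-valued statement. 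Concretely, one proves a vector-valued Marcinkiewicz multiplier theorem on $L^p(\R^d,w;Y)$ for UMD spaces $Y$ with property~$(\alpha)$ and $w \in A_p^{rec}$, then applies it with $Y = \mathrm{Rad}(X)$ and the diagonal symbol $(M_k)_{k}$. The one-dimensional building block is the weighted (scalar, then under property~$(\alpha)$ operator-valued) Mihlin theorem, and one iterates over the $d$ coordinates using the fact that $A_p^{rec}$ weights are uniformly $A_p$ in each variable; the randomized Littlewood--Paley decomposition \eqref{eq:randomized-LP-decomp} applied coordinate-by-coordinate is the standard device that converts the one-dimensional estimates into the $d$-dimensional rectangular one while preserving $\mathcal{R}$-bounds (this is where property~$(\alpha)$ enters again, to juggle the several independent Rademacher families that appear in the iteration).

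The main obstacle — and the part that deserves genuine care rather than routine verification — is the coordinate-wise iteration in part (ii): one must set up a Littlewood--Paley--Rubio-de-Francia type decomposition adapted to rectangles $\prod_{i=1}^d [\,\pm 2^{n_i}, \pm 2^{n_i+1}]$, show that the associated (randomized) projections are uniformly bounded on $L^p(\R^d,w;X)$ for $w \in A_p^{rec}$, and then run a Fubini-type argument freezing all but one variable at a time, at each stage invoking the one-variable weighted operator-valued Mihlin theorem with the frozen variables absorbed into the operator. Keeping track of the several Rademacher sequences $\varepsilon_{n_1},\dots,\varepsilon_{n_d}$ and collapsing them via property~$(\alpha)$ (really property~$(\alpha)$ iterated, equivalently the multi-parameter contraction principle) is the technical heart; everything else — the diagonal-symbol encoding, the $\mathrm{Rad}(X)$ reformulation of $\mathcal{R}$-boundedness, and checking the Mihlin/Marcinkiewicz derivative bounds for the block-diagonal symbols — is bookkeeping. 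I would also note that part (i) is the special case of the argument where only the isotropic Mihlin condition is needed, so in a clean write-up (i) can be deduced as a corollary of the proof of (ii), or simply proved first as a warm-up; either way the dependence of the constants on $[w]_{A_p}$ (or on the rectangular $A_p$ constants) should be tracked as an increasing function, exactly as in the statements recalled from \cite{Meyries&Veraar_pt-multiplication}, since this uniformity is what later proofs in the paper will consume.
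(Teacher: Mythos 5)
Your approach to (i) is the paper's approach: encode the finite family as a diagonal operator-valued symbol acting on $\mathrm{Rad}(X)$ and apply a weighted operator-valued Mihlin theorem. But your justification contains a real gap. You write that the diagonal symbol ``lies in $\mathscr{M}_{d+2}(\R^{d};\mathcal{B}(\mathrm{Rad}(X)))$ with norm $\lesssim C$ because the diagonal blocks satisfy the derivative bounds uniformly,'' and then apply the operator-valued Mihlin theorem because ``$\mathrm{Rad}(X)$ is again a UMD space with property $(\alpha)$.'' This is not how the argument closes. The operator-valued weighted Mihlin theorem used here (the weighted Weis theorem, $\mathscr{RM}_{d+2}(\R^{d};\mathcal{B}(Y)) \hookrightarrow \mathcal{M}_{p,w}(Y)$ for $Y$ UMD, as recalled before \cite[Proposition~3.1]{Meyries&Veraar_pt-multiplication}) holds for \emph{every} UMD $Y$, without any hypothesis of property $(\alpha)$ on $Y$ — but it requires $\mathcal{R}$\emph{-boundedness} of $\{\,|\xi|^{|\alpha|}D^{\alpha}m(\xi)\,\}$ in $\mathcal{B}(Y)$, not merely uniform operator-norm bounds. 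For your diagonal symbol with scalar entries bounded by $C$, upgrading the uniform bound to the required $\mathcal{R}$-bound on $\mathrm{Rad}(X)$ is \emph{exactly} where property $(\alpha)$ of $X$ is used: a uniformly bounded family of scalar-diagonal operators on $\mathrm{Rad}(X)$ is $\mathcal{R}$-bounded precisely when $X$ has property $(\alpha)$. You gesture at $(\alpha)$ but attribute it to ``stability of the randomization space under the multiplier theorem,'' which is not the issue; as written, the step from uniform to $\mathcal{R}$-bounded derivative conditions is simply missing. This is the Girardi–Weis step the paper cites, and it should be made explicit.

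For (ii) your plan is in the same spirit as the paper but with different scaffolding. The paper shows that the one-coordinate dyadic projections $\{\Delta_{k}[I_{j}]\}_{j\in\Z}$ form unconditional Schauder decompositions of $L^{p}(\R^{d},w;X)$ for $w\in A_{p}^{rec}$, invokes property $(\alpha)$ (via \cite[Remark~2.5.2]{Witvliet_PHD-thesis}) to combine these commuting decompositions into a \emph{product} unconditional Schauder decomposition, and then applies the abstract Marcinkiewicz-from-Schauder-decomposition machinery of \cite{Kunstmann&Weis_lecture_notes}. You instead propose lifting Kurtz's scalar theorem directly to $\mathrm{Rad}(X)$ and iterating coordinate-by-coordinate with a ``Fubini-type argument,'' again using $(\alpha)$ to collapse the resulting multiparameter Rademacher sums. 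That is a plausible alternative route, but you flag the coordinate iteration as the technical heart while leaving it entirely at the level of a plan; the paper sidesteps the hands-on iteration by handing the product decomposition to the Schauder-decomposition framework. If you want to pursue your route, you should spell out the iteration (in particular how the weighted one-variable multiplier theorem is applied with the remaining variables frozen, and how the $\mathcal{R}$-bounds, rather than just operator norms, survive each stage).
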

\begin{proof}
(i) Let $w \in A_{p}$. For each $N \in \N$ we define $\mathscr{RM}_{N}(\R^{d};\mathcal{B}(X))$ as the space of all operator-valued symbols $m \in C^{N}(\R^{d} \setminus \{0\};\mathcal{B}(X))$ for which
\[
\norm{m}_{\mathscr{RM}_{N}} = \norm{m}_{\mathscr{RM}_{N}(\R^{d};\mathcal{B}(X))}
:=  \mathcal{R}\big\{\,|\xi|^{|\alpha|}D^{\alpha}m(\xi) : \xi \neq 0, |\alpha| \leq N \,\big\} < \infty.
\]
If $Y$ is a UMD space, then $\mathscr{RM}_{d+2}(\R^{d};\mathcal{B}(Y)) \hookrightarrow \mathcal{M}_{p,w}(Y)$ (as remarked before \cite[Proposition~3.1]{Meyries&Veraar_pt-multiplication}). Using this for $Y = \mathrm{Rad}(X)$, the desired result follows  in the same spirit as in
\cite[Section~3]{Girardi&Weis_criteria_R-boundedness} (also see \cite{Hytonen_survey-paper_vector-valued,Kunstmann&Weis_lecture_notes}).

(ii) Put $I_{j}:= [-2^{j},-2^{j-1}) \cup (2^{j-1},2^{j}]$ for each $j \in \Z$. For each $k \in \{1,\ldots,d\}$ it can be shown that $\{1_{\R^{k} \times I_{j} \times \R^{d-k}}\}_{j \in \Z} \subset \mathcal{M}_{p,w}(X)$ and that the associated sequence of Fourier multiplier operators $\{\Delta_{k}[I_{j}]\}_{j \in \Z}$ defines an unconditional Schauder decomposition of $L^{p}(\R^{d},w;X)$; see e.g. \cite[Chapter~4]{Lindemulder_master-thesis}.
Since $\{\Delta_{k}[I_{j}]\}_{j \in \Z}$ and $\{\Delta_{l}[I_{j}]\}_{j \in \Z}$ commute for $k,l \in \{1,\ldots,d\}$ and since $X$ is assumed to have property $(\alpha)$, it follows (see \cite[Remark~2.5.2]{Witvliet_PHD-thesis}) that the product decomposition $\{\prod_{i=1}^{d}\Delta_{k}[I_{j}] \}$ is an unconditional Schauder decomposition of $L^{p}(\R^{d},w;X)$. One can now proceed as in the unweighted case; see e.g. \cite[Theorem~4.13$\&$Example~5.2]{Kunstmann&Weis_lecture_notes}.
\end{proof}

As we will see below, for general UMD spaces it is still possible to give criteria for the $\mathcal{R}$-boundedness of a sequence of Fourier multipliers. Before we go to the Fourier analytic setting, we start with a general proposition which serves as the main tool for the $\mathcal{R}$-boundedness of Fourier multipliers below.
In order to state the proposition, we first need to introduce some notation.

Let $Y$ be a Banach space. For a sequence $\{T_{j}\}_{j \in \N} \subset \mathcal{B}(Y)$ we write
\[
\norm{\{T_{j}\}_{j \in \N}}_{Y \to \mathrm{Rad}(Y)} := \inf\left\{ C : \normB{\sum_{j=0}^{n}\varepsilon_{j}T_{j}y}_{L^{2}(\Omega;Y)} \leq C\norm{y}_{Y}, y \in Y \right\}
\]
and
\[
\norm{\{T_{j}\}_{j \in \N}}_{\mathrm{Rad}(Y) \to Y} :=
\inf\left\{ C : \normB{\sum_{j=0}^{n}T_{j}y_{j}}_{Y} \leq C\normB{\sum_{j=0}^{n}\varepsilon_{j}y_{j}}_{L^{2}(\Omega;Y)}, n \in \N, y_{0},\ldots,y_{n} \in Y \right\}.
\]
In the following remark we provide an interpretation of these quantities in terms of the space $\mathrm{Rad}(Y)$, which gives a motivation for the chosen notation.

\begin{remark}
Identifying $\{T_{j}\}_{j \in \N}$ with the linear operator $\mathbf{T}:Y \longra \ell^{0}(\N;X), y \mapsto (T_{j}y)_{j \in \N}$, we have
\[
\norm{\{T_{j}\}}_{Y \to \mathrm{Rad}(Y)} = \norm{\{T_{j}\}}_{\mathcal{B}(Y,\mathrm{Rad}(Y))} = \norm{\mathbf{T}}_{\mathcal{B}(Y,\mathrm{Rad}(Y))},
\]
where $\norm{\,\cdot\,}_{\mathcal{B}(Y,\mathrm{Rad}(Y))}$ is, in the natural way, viewed as an extended norm on $L(Y,\ell^{0}(\N;X))$, the space of linear operators from $Y$ to $\ell^{0}(\N;X)$.
Similarly, identifying $\{T_{j}\}_{j \in \N}$ with the linear operator $\mathbf{T}^{\mathrm{t}}:c_{00}(\N;X) \longra Y, (y_{j})_{j \in \N} \mapsto \sum_{j \in \N}T_{j}y_{j}$, we have
\[
\norm{\{T_{j}\}}_{\mathrm{Rad}(Y) \to Y} = \norm{\{T_{j}\}}_{\mathcal{B}(\mathrm{Rad}(Y),Y)} = \norm{\mathbf{T}^{\mathrm{t}}}_{\mathcal{B}(\mathrm{Rad}(Y),Y)},
\]
where $\norm{\,\cdot\,}_{\mathcal{B}(\mathrm{Rad}(Y),Y)}$ is viewed, in the natural way, as an extended norm on $L(c_{00}(Y),Y)$.

Using that the natural map $i:\mathrm{Rad}(Y^{*}) \longra \mathrm{Rad}(Y)^{*}$ is a contraction (see \cite{boek2}), we find that
\begin{eqnarray*}
\norm{\{T_{j}\}}_{\mathrm{Rad}(Y) \to Y}
&=& \norm{\mathbf{T}^{\mathrm{t}}}_{\mathcal{B}(\mathrm{Rad}(Y),Y)} = \norm{(\mathbf{T}^{t})^{*}}_{\mathcal{B}(Y^{*},\mathrm{Rad}(Y)^{*})}
 = \norm{i \circ (\{T_{j}^{*}\})^{t}}_{\mathcal{B}(Y^{*},\mathrm{Rad}(Y)^{*})} \\
&\leq& \norm{(\{T_{j}^{*}\})^{t}}_{\mathcal{B}(Y^{*},\mathrm{Rad}(Y^{*}))} = \norm{\{T_{j}^{*}\}}_{Y^{*} \to \mathrm{Rad}(Y^{*})}.
\end{eqnarray*}
If $X$ is $K$-convex with $K$-convexity constant $K_{X}$,\footnote{For the definition of $K$-convexity we refer to \cite{boek,Maurey_Type_cotype_K-convexity}. All UMD spaces are $K$-convex.} then $i$ is an isomorphism of Banach spaces with $\norm{i^{-1}} \leq K_{X}$ (see \cite{boek2}), so that
\begin{eqnarray*}
\norm{\{T_{j}\}}_{Y \to \mathrm{Rad}(Y)}
&=& \norm{\mathbf{T}}_{\mathcal{B}(Y,\mathrm{Rad}(Y))} = \norm{\mathbf{T}^{*}}_{\mathcal{B}(\mathrm{Rad}(Y)^{*},Y^{*})}
 = \norm{\{T_{j}^{*}\} \circ i^{-1}}_{\mathcal{B}(\mathrm{Rad}(Y)^{*},Y^{*})} \\
&\leq& K_{X}\norm{\{T_{j}^{*}\} }_{\mathcal{B}(\mathrm{Rad}(Y^{*}),Y^{*})} = K_{X}\norm{\{T_{j}^{*}\}}_{\mathrm{Rad}(Y^{*}) \to Y^{*}}.
\end{eqnarray*}
\end{remark}

\begin{prop}\label{prop:KW_thm3.3}
Let $Y$ be a Banach space and let $\{U_{j}\}_{j \in \N}$ and $\{V_{j}\}_{j \in \N}$ be two sequences of operators in $\mathcal{B}(Y)$.
\begin{itemize}
\item[(i)] The following inequalities hold true:
\begin{equation}\label{eq:prop:KW_thm3.3;(i);1e_verg}
\mathcal{R}(\{U_{j}\}) \leq \norm{\{U_{j}\}}_{\mathrm{Rad}(Y) \to Y} \leq \norm{\{U_{j}\}}_{\mathrm{Rad}(\mathcal{B}(Y))} \leq \sup_{n}\sup_{\epsilon_{j}=\pm1}\normB{\sum_{j=0}^{n}\epsilon_{j}U_{j}},
\end{equation}
\begin{equation}\label{eq:prop:KW_thm3.3;(i);3e_verg}
\mathcal{R}(\{U_{j}\}) \leq \norm{\{U_{j}\}}_{Y \to \mathrm{Rad}(Y)} \leq \norm{\{U_{j}\}}_{\mathrm{Rad}(\mathcal{B}(Y))} \leq \sup_{n}\sup_{\epsilon_{j}=\pm1}\normB{\sum_{j=0}^{n}\epsilon_{j}U_{j}}
\end{equation}
and
\begin{equation}\label{eq:prop:KW_thm3.3;(i);2e_verg}
\norm{\{U_{j}V_{j}\}_{j \in \N}}_{\mathrm{Rad}(Y) \to Y} \leq
\norm{\{U_{j}\}_{j \in \N}}_{\mathrm{Rad}(\mathcal{B}(Y))}\mathcal{R}(\{V_{j}\}_{j \in \N}).
\end{equation}
\item[(ii)] Suppose that $E$ has property $(\Delta)$. If
\[
C_{1} := \norm{\{U_{j}\}_{j \in \N}}_{\mathrm{Rad}(Y) \to Y} < \infty \quad \mbox{and} \quad C_{2}:= \norm{\{V_{j}\}_{j \in \N}}_{Y \to \mathrm{Rad}(Y)} < \infty,
\]
then $\{\sum_{j=0}^{n}U_{k}V_{k}\}$ is $\mathcal{R}$-bounded with $\mathcal{R}$-bound $\leq \Delta_{E}C_{1}C_{2}$.
\end{itemize}
\end{prop}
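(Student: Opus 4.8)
The plan is to prove part (i) first as a chain of elementary estimates, and then to derive part (ii) from part (i) together with the triangular contraction property. For the first inequality in \eqref{eq:prop:KW_thm3.3;(i);1e_verg}, given distinct $U_{0},\dots,U_{n}$ from the family and vectors $y_{0},\dots,y_{n}$, one writes $\norm{\sum_{j}\varepsilon_{j}U_{j}y_{j}}_{L^{2}(\Omega;Y)}$ and, conditioning on the $\varepsilon_j$ (equivalently, fixing a sign pattern), bounds it using the definition of $\norm{\{U_{j}\}}_{\mathrm{Rad}(Y)\to Y}$ applied to the vectors $\varepsilon_{j}(\omega)y_{j}$; since $\norm{\sum_j \varepsilon_j (\varepsilon_j(\omega) y_j)}_{L^2} = \norm{\sum_j \varepsilon_j y_j}_{L^2}$ by symmetry of the Rademachers, this yields $\mathcal{R}(\{U_j\}) \le \norm{\{U_j\}}_{\mathrm{Rad}(Y)\to Y}$. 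The middle inequality $\norm{\{U_{j}\}}_{\mathrm{Rad}(Y)\to Y}\le\norm{\{U_{j}\}}_{\mathrm{Rad}(\mathcal{B}(Y))}$ follows from the contraction principle interpretation: $\norm{\sum_j U_j y_j}_Y \le \norm{\sum_j \varepsilon_j U_j}_{L^2(\Omega;\mathcal{B}(Y))} \norm{\sum_j \varepsilon_j y_j}_{L^2(\Omega;Y)}$, which is a standard duality/randomization estimate (one inserts a second Rademacher sequence and uses Fubini). The final inequality is trivial since $\norm{\sum_j \varepsilon_j U_j}_{L^2(\Omega;\mathcal{B}(Y))}\le \sup_{\epsilon_j = \pm 1}\norm{\sum_j \epsilon_j U_j}_{\mathcal{B}(Y)}$.

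The chain \eqref{eq:prop:KW_thm3.3;(i);3e_verg} is handled symmetrically: for the first inequality one conditions on the signs and uses that $\norm{(\varepsilon_j(\omega) y)_j}_{\mathrm{Rad}(Y)} = \norm{(\varepsilon_j y)_j}_{\mathrm{Rad}(Y)}\cdot$(something independent of $\omega$)—more precisely one estimates $\norm{\sum_j \varepsilon_j U_j y_j}$ by freezing $y_j = y$ is not quite it; instead note $\mathcal{R}(\{U_j\})$ compares $\sum \varepsilon_j U_j y_j$ to $\sum \varepsilon_j y_j$, and applying the definition of $\norm{\{U_j\}}_{Y\to\mathrm{Rad}(Y)}$ twice (via a transference between the two one-sided norms, or directly via the contraction principle) gives the bound. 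Inequality \eqref{eq:prop:KW_thm3.3;(i);2e_verg} is the key product estimate: one writes $\norm{\sum_j U_j V_j y_j}_Y \le \norm{\sum_j \varepsilon_j U_j}_{L^2(\Omega;\mathcal{B}(Y))}\norm{\sum_j \varepsilon_j V_j y_j}_{L^2(\Omega;Y)}$ by the same duality estimate used above, and then bounds the last factor by $\mathcal{R}(\{V_j\})\norm{\sum_j \varepsilon_j y_j}_{L^2(\Omega;Y)}$ using $\mathcal{R}$-boundedness of $\{V_j\}$ (after checking the $\mathcal{R}$-bound is insensitive to repetitions, which is part of the definition).

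For part (ii), the goal is to show $\{\sum_{j=0}^{n} U_j V_j\}_{n}$ is $\mathcal{R}$-bounded. Given $n_0,\dots,n_N$ and $y_0,\dots,y_N$, expand $\sum_{m=0}^{N}\varepsilon_m \big(\sum_{j=0}^{n_m} U_j V_j\big) y_m$ and introduce an independent Rademacher sequence $\{\varepsilon'_j\}$: the expression $\sum_{m}\sum_{j\le n_m}\varepsilon_m U_j V_j y_m$ should be rewritten as a \emph{triangular} double-Rademacher sum in $\varepsilon_m\varepsilon'_j$ of terms $U_j V_j y_m$ (the constraint $j\le n_m$—or rather the monotone structure of partial sums—is exactly what triangularity encodes after reindexing). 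Applying property $(\Delta)$ turns this into the full double sum $\sum_{m,j}\varepsilon_m\varepsilon'_j U_j V_j y_m$ at the cost of $\Delta_E$. Now sum first in $j$: by $C_1 = \norm{\{U_j\}}_{\mathrm{Rad}(Y)\to Y}<\infty$ applied conditionally on $\{\varepsilon_m\}$ to the vectors $V_j y_m$ summed over $m$ weighted by $\varepsilon_m$—here one needs $\norm{\sum_j \varepsilon'_j V_j z_j}$ controlled, which for $z_j$ independent of $j$ equal to $\sum_m \varepsilon_m y_m$ reduces via $C_2 = \norm{\{V_j\}}_{Y\to\mathrm{Rad}(Y)}$—one peels off first the $U_j$'s (factor $C_1$) and then the $V_j$'s (factor $C_2$), landing on $\Delta_E C_1 C_2 \norm{\sum_m \varepsilon_m y_m}_{L^2(\Omega;Y)}$. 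I expect the main obstacle to be the careful bookkeeping in part (ii): correctly identifying the partial-sum structure with a triangular double-Rademacher sum and tracking which Rademacher variable is summed in which order so that $C_1$ and $C_2$ each apply with the intended "inside" and "outside" roles. Everything else is routine randomization and Fubini.
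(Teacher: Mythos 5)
Your overall plan matches the paper's (which itself follows Kalton--Weis and Meyries--Veraar): the first and third inequalities in \eqref{eq:prop:KW_thm3.3;(i);1e_verg}, the reduction of the middle one to \eqref{eq:prop:KW_thm3.3;(i);2e_verg} with $V_j=I$, and the proof of \eqref{eq:prop:KW_thm3.3;(i);2e_verg} itself via $\norm{\sum_j U_j V_j y_j}_Y=\norm{\E[(\sum_j\varepsilon_j U_j)(\sum_j\varepsilon_j V_j y_j)]}_Y$ are all fine. However, for the first inequality in \eqref{eq:prop:KW_thm3.3;(i);3e_verg} your sketch (``applying the definition of $\norm{\{U_j\}}_{Y\to\mathrm{Rad}(Y)}$ twice, via transference or the contraction principle'') does not pin down the actual mechanism. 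The step that makes this work is the diagonal extraction inequality $\norm{\sum_j\varepsilon_j y_{j,j}}_{L^2(\Omega;Y)}\le\norm{\sum_{j,k}\varepsilon_j\varepsilon_k'y_{j,k}}_{L^2(\Omega\times\Omega';Y)}$ (Girardi--Weis, cf.\ \cite[Lemma~3.12]{Girardi&Weis_FM_geometry_BS}), applied with $y_{j,k}=U_k y_j$, together with a Fubini argument showing the pointwise-induced operators on $L^2(\Omega;Y)$ inherit the $Y\to\mathrm{Rad}(Y)$ bound. Without naming this ingredient, your outline for this inequality is not a proof.

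In part (ii) the gap is more serious: you propose to \emph{rewrite} $\sum_m\sum_{j\le n_m}\varepsilon_m U_j V_j y_m$ as a triangular double-Rademacher sum in $\varepsilon_m\varepsilon_j'$ and then apply property $(\Delta)$. But the single sum is not equal to (nor is it dominated in general by) the double sum; e.g.\ with $Y=\C$, $U_j=V_j=I$, $y_m=1$ the single-Rademacher norm grows like $N^{3/2}$ while the full double-Rademacher norm grows like $N$. The second Rademacher sequence can only be introduced legitimately by applying $C_1$: write $\sum_m\varepsilon_m S_{n_m}y_m=\sum_j U_j w_j$ with $w_j=V_j\sum_{m:n_m\ge j}\varepsilon_m y_m$, apply the (Fubini-extended) bound $\norm{\sum_j U_j w_j}\le C_1\norm{\sum_j\varepsilon_j' w_j}$, which yields the triangular double sum $\sum_{j,m:n_m\ge j}\varepsilon_j'\varepsilon_m V_j y_m$ \emph{with $V_j$ only}; then apply $(\Delta)$ to pass to the full sum, and finally apply $C_2$ pointwise in $\omega$. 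In your version the $C_1$ estimate is applied after $(\Delta)$ to a double sum that still carries $U_j V_j$, so either the origin of $\varepsilon_j'$ is unexplained or $C_1$ would be charged twice. Reordering the steps as above resolves this and recovers the bound $\Delta_Y C_1 C_2$.
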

\begin{proof}
Except for \eqref{eq:prop:KW_thm3.3;(i);3e_verg}, where we follow the estimates from the proof of \cite[Lemma~4.1]{Meyries&Veraar_pt-multiplication},
the proposition follows easily by inspection of the proof of \cite[Theorem~3.3]{Kalton&Weis_sums_of_closed_operators}.
Let us provide the details for the convenience of the reader.

(i) The third inequality in \eqref{eq:prop:KW_thm3.3;(i);1e_verg} is trivial and the second inequality in \eqref{eq:prop:KW_thm3.3;(i);1e_verg} is just the inequality \eqref{eq:prop:KW_thm3.3;(i);2e_verg} with $V_{j}=I$ for all $j$. For the first inequality in \eqref{eq:prop:KW_thm3.3;(i);1e_verg}, let $y_{0},\ldots,y_{n} \in Y$.
For every $\{\epsilon_{j}\}_{j \in \N} \in \{-1,1\}^{n+1}$ we have
\[
\normB{\sum_{j=0}^{n}\epsilon_{j}U_{j}y_{j}}_{Y} \leq \norm{\{U_{j}\}_{j \in \N}}_{\mathrm{Rad}(Y) \to Y}\normB{\sum_{j=0}^{n}\varepsilon_{j}y_{j}}_{L^{2}(\Omega;Y)}
\]
because $\{\varepsilon_{j}\}_{j=0}^{n}$ and $\{\epsilon_{j}\varepsilon_{j}\}_{j=0}^{n}$ are identically distributed.
Plugging in $\epsilon_{j} = \varepsilon_{j}(\omega)$ and taking $L^{2}$-norms with respect to $\omega \in \Omega$, the desired inequality follows.

In \eqref{eq:prop:KW_thm3.3;(i);3e_verg} we only need to prove the first inequality; the other two inequalities are trivial.
For this we use the fact \cite[Lemma~3.12]{Girardi&Weis_FM_geometry_BS} that for any $\{y_{j,k}\}_{j,k=0}^{n} \subset Y$ one has the inequality
\begin{equation}\label{eq:prop:KW_thm3.3;(i);3e_verg;proof;ineq_KW_uit_MV}
\normB{\sum_{j=0}^{n}\varepsilon_{j}y_{j,j}}_{L^{2}(\Omega;Y)} \leq
\normB{\sum_{j,k=0}^{n}\varepsilon_{j}\varepsilon_{k}'y_{j,k}}_{L^{2}(\Omega \times \Omega';Y)}.
\end{equation}
Now let $y_{0},\ldots,y_{n} \in Y$.
Denote by $\{\tilde{U}_{j}\} \subset \mathcal{B}(L^{2}(\Omega;Y))$ the sequence of operators pointwise induced by $\{U_{j}\}$. Using Fubini one easily sees that $\norm{\{\tilde{U}_{j}\} }_{L^{2}(\Omega;Y) \to \mathrm{Rad}(L^{2}(\Omega;Y))} \leq \norm{\{U_{j}\}}_{Y \to \mathrm{Rad}(Y)}$.
Invoking \eqref{eq:prop:KW_thm3.3;(i);3e_verg;proof;ineq_KW_uit_MV} with $y_{j,k} = U_{k}y_{j}$, we thus find
\begin{eqnarray*}
\normB{\sum_{j=0}^{n}\varepsilon_{j}U_{j}y_{j}}_{L^{2}(\Omega;Y)}
&\leq& \normB{\sum_{j,k=0}^{n}\varepsilon_{j}\varepsilon_{k}'U_{k}y_{j}}_{L^{2}(\Omega \times \Omega';Y)} \\
&=& \normB{\sum_{k=0}^{n}\varepsilon_{k}'\tilde{U}_{k}\Big(\sum_{j=0}^{n} \varepsilon_{j}y_{j}\Big)}_{L^{2}(\Omega';L^{2}(\Omega;Y))} \\
&\leq& \norm{\{U_{j}\}}_{Y \to \mathrm{Rad}(Y)}\normB{\sum_{j=0}^{n} \varepsilon_{j}y_{j}}_{L^{2}(\Omega;Y))}.
\end{eqnarray*}

For \eqref{eq:prop:KW_thm3.3;(i);2e_verg} note that if $y_{0},\ldots,y_{n} \in Y$, then
\begin{eqnarray*}
\normB{\sum_{j=0}^{n}U_{j}V_{j}y_{j}}_{Y}
&=& \norml{\, \E\left[ \left( \sum_{j=0}^{n}\varepsilon_{j}U_{j} \right)\,\left( \sum_{j=0}^{n}\varepsilon_{j}V_{j}y_{j} \right) \right] \,}_{Y} \\
&\leq& \normB{\sum_{j=0}^{n}\varepsilon_{j}U_{j}}_{L^{2}(\Omega;\mathcal{B}(Y))} \normB{\sum_{j=0}^{n}\varepsilon_{j}V_{j}y_{j}}_{L^{2}(\Omega;Y)} \\
&\leq& \norm{\{U_{j}\}}_{\mathrm{Rad}(\mathcal{B}(Y))}\mathcal{R}(\{V_{j}\})\normB{\sum_{j=0}^{n}\varepsilon_{j}y_{j}}_{L^{2}(\Omega;Y)}.
\end{eqnarray*}

(ii) Write $S_{k}:=\sum_{j=0}^{k}U_{j}V_{j}$ for each $k \in \N$. For all $y_{0},\ldots,y_{n} \in Y$ we have
\begin{eqnarray*}
\normB{\sum_{k=0}^{n}\varepsilon_{k}S_{k}y_{k}}_{L^{2}(\Omega;Y)}
&=& \normB{\sum_{j=0}^{n}U_{j}\sum_{k=j}^{n}\varepsilon_{k}V_{j}y_{k}}_{L^{2}(\Omega;Y)} \\
&\leq& C_{1}\normB{\sum_{j=0}^{n}\varepsilon'_{j}\sum_{k=j}^{n}\varepsilon_{k}V_{j}y_{k}}_{L^{2}(\Omega;L^{2}(\Omega';Y))} \\
&\leq& \Delta_{Y}C_{1}\normB{\sum_{j=0}^{n}\varepsilon'_{j}V_{j}\sum_{k=0}^{n}\varepsilon_{k}y_{k}}_{L^{2}(\Omega;L^{2}(\Omega';Y))} \\
&\leq& \Delta_{Y}C_{1}C_{2}\normB{\sum_{k=0}^{n}\varepsilon_{k}y_{k}}_{L^{2}(\Omega;;Y)},
\end{eqnarray*}
which proves the required $\mathcal{R}$-bound.
\end{proof}

For later reference it will be convenient to record the following immediate corollary to the estimates \eqref{eq:prop:KW_thm3.3;(i);1e_verg} and \eqref{eq:prop:KW_thm3.3;(i);3e_verg}  in (i) of the above proposition:
\begin{cor}\label{prop:R-bdd_via_Mikhlin}
Let $X$ be a Banach space, $p \in (1,\infty)$ and $w \in A_{p}(\R^{d})$.
Let $\{m_{j}\}_{j \in \N} \subset \mathcal{M}_{p,w}(X)$ be a sequence of symbols such that
\begin{equation}\label{eq:prop:R-bdd_via_Mikhlin;cond_pm1}
K := \sup_{n}\sup_{\epsilon_{j}=\pm1}\normB{\sum_{j=0}^{n}\epsilon_{j}m_{j}}_{\mathcal{M}_{p,w}(X)} < \infty.
\end{equation}
Then $\{m_{j}\}_{j \in \N}$ defines an $\mathcal{R}$-bounded sequence of Fourier multiplier operators $\{T_{m_{j}}\}_{j \in \N}$ on $Y=L^{p}(\R^{d},w;X)$ with $\mathcal{R}$-bound
\[
\mathcal{R}(\{T_{m_{j}}\}) \leq \norm{\{T_{j}\}}_{\mathrm{Rad}(Y) \to Y} \vee  \norm{\{T_{j}\}}_{Y \to \mathrm{Rad}(Y)} \leq \norm{\{T_{j}\}}_{\mathrm{Rad}(\mathcal{B}(Y))}  \leq K.
\]
\end{cor}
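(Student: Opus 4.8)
The plan is to read this off directly from Proposition~\ref{prop:KW_thm3.3}(i), specialized to the Banach space $Y = L^{p}(\R^{d},w;X)$ and the operator sequence $U_{j} := T_{m_{j}}$, $j \in \N$. First I would note that $\{U_{j}\}_{j \in \N} \subset \mathcal{B}(Y)$, which is exactly the assumption $m_{j} \in \mathcal{M}_{p,w}(X)$; and that part~(i) of Proposition~\ref{prop:KW_thm3.3} requires nothing of $Y$ beyond being a Banach space, so it applies verbatim here, with no geometric hypothesis on $X$.

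The only computation to perform is to rewrite the hypothesis \eqref{eq:prop:R-bdd_via_Mikhlin;cond_pm1} as a bound on the quantity $\sup_{n}\sup_{\epsilon_{j}=\pm1}\norm{\sum_{j=0}^{n}\epsilon_{j}U_{j}}_{\mathcal{B}(Y)}$ sitting on the right-hand sides of \eqref{eq:prop:KW_thm3.3;(i);1e_verg} and \eqref{eq:prop:KW_thm3.3;(i);3e_verg}. For this I would invoke the fact recalled above that the symbol-to-operator assignment $\mathcal{M}_{p,w}(X) \to \mathcal{B}(Y)$, $m \mapsto T_{m}$, is an isometric Banach algebra homomorphism; in particular it is linear, so $\sum_{j=0}^{n}\epsilon_{j}U_{j} = T_{\sum_{j=0}^{n}\epsilon_{j}m_{j}}$ and hence $\norm{\sum_{j=0}^{n}\epsilon_{j}U_{j}}_{\mathcal{B}(Y)} = \norm{\sum_{j=0}^{n}\epsilon_{j}m_{j}}_{\mathcal{M}_{p,w}(X)}$ for every finite sign sequence. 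Taking the supremum over $n$ and over $\epsilon_{j} \in \{-1,1\}$ gives $\sup_{n}\sup_{\epsilon_{j}=\pm1}\norm{\sum_{j=0}^{n}\epsilon_{j}U_{j}}_{\mathcal{B}(Y)} = K < \infty$.

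With this in hand, \eqref{eq:prop:KW_thm3.3;(i);1e_verg} and \eqref{eq:prop:KW_thm3.3;(i);3e_verg} give at once the two chains $\mathcal{R}(\{U_{j}\}) \le \norm{\{U_{j}\}}_{\mathrm{Rad}(Y)\to Y} \le \norm{\{U_{j}\}}_{\mathrm{Rad}(\mathcal{B}(Y))} \le K$ and $\mathcal{R}(\{U_{j}\}) \le \norm{\{U_{j}\}}_{Y\to\mathrm{Rad}(Y)} \le \norm{\{U_{j}\}}_{\mathrm{Rad}(\mathcal{B}(Y))} \le K$; taking the maximum of the two middle quantities (still dominated by $\norm{\{U_{j}\}}_{\mathrm{Rad}(\mathcal{B}(Y))} \le K$) produces precisely the displayed chain of inequalities, with $T_{j} = T_{m_{j}}$. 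There is no genuine obstacle here: the corollary is Proposition~\ref{prop:KW_thm3.3}(i) repackaged for Fourier multipliers, and the only point deserving a word of care is the linearity and isometry of $m \mapsto T_{m}$, which makes the $\pm1$-condition on symbols coincide literally with the $\pm1$-condition on the associated operators.
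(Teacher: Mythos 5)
Your proof is correct and coincides with the paper's approach: the paper itself introduces this corollary with the words ``the following immediate corollary to the estimates \eqref{eq:prop:KW_thm3.3;(i);1e_verg} and \eqref{eq:prop:KW_thm3.3;(i);3e_verg},'' and your argument simply fills in the routine step of passing from the operator-norm condition in Proposition~\ref{prop:KW_thm3.3}(i) to the symbol-norm condition \eqref{eq:prop:R-bdd_via_Mikhlin;cond_pm1} via the linearity and isometry of $m \mapsto T_{m}$. No gap.
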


If $X$ is a UMD space, $p \in (1,\infty)$ and $w \in A_{p}(\R^{d})$,
then we have $\mathscr{M}_{d+2}(\R^{d}) \hookrightarrow \mathcal{M}_{p,w}(X)$. So the number $K$ from \eqref{eq:prop:R-bdd_via_Mikhlin;cond_pm1} can be explicitly bounded via the Mihlin condition defining $\mathscr{M}_{d+2}(\R^{d})$. In particular, for a bounded sequence in $\mathscr{M}_{d+2}(\R^{d})$ which is locally finite in a uniform way we find:

\begin{cor}\label{prop;R-bddness_fm;nbhd_cond_non-zero_restr}
Let $X$ be a UMD space, $p \in (1,\infty)$ and $w \in A_{p}(\R^{d})$. Let $\{m_{j}\}_{j \in \N} \subset L^{\infty}(\R^{d})$ be a sequence of symbols such that:
\begin{itemize}
\item[(a)] There exists $N \in \N$ such that every $\xi \in \R^{d} \setminus \{0\}$ possesses an open neighborhood $U \subset \R^{d} \setminus \{0\}$ with the property that $\#\{ j : m_{j}|_{U} \neq 0 \} \leq N$.
\item[(b)] $\{m_{j}\}_{j \in \N}$ is a bounded sequence in $\mathscr{M}_{d+2}(\R^{d})$.
\end{itemize}
Then $\{m_{j}\}_{j \in \N}$ defines an $\mathcal{R}$-bounded sequence of Fourier multiplier operators $\{T_{m_{j}}\}_{j \in \N}$
 on $L^{p}(\R^{d},w;X)$ with $\mathcal{R}$-bound
\[
\mathcal{R}(\{T_{m_{j}}\}) \leq \sup_{n}\sup_{\epsilon_{j}=\pm1}\normB{\sum_{j=0}^{n}\epsilon_{j}m_{j}}_{\mathcal{M}_{p,w}(X)}
\lesssim C_{X,p,d}([w]_{A_{p}})N\sup_{j \in \N}\norm{m_{j}}_{\mathscr{M}_{d+2}},
\]
where $C_{X,p,d}:[1,\infty) \longra (0,\infty)$ is some increasing function only depending on $X$, $p$ and $d$.
\end{cor}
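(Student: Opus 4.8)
The plan is to reduce the statement to Corollary~\ref{prop:R-bdd_via_Mikhlin} and then estimate, by hand, the number $K$ from \eqref{eq:prop:R-bdd_via_Mikhlin;cond_pm1} using the uniform local finiteness in~(a). Since $X$ is UMD, $p \in (1,\infty)$ and $w \in A_{p}(\R^{d})$, there is the continuous embedding $\mathscr{M}_{d+2}(\R^{d}) \hookrightarrow \mathcal{M}_{p,w}(X)$ with operator norm at most $C_{X,p,d}([w]_{A_{p}})$ for some increasing $C_{X,p,d}:[1,\infty) \longra (0,\infty)$. Thus, fixing $n \in \N$ and signs $\epsilon_{0},\ldots,\epsilon_{n} \in \{-1,1\}$, the finite sum $m := \sum_{j=0}^{n}\epsilon_{j}m_{j}$ belongs to $\mathscr{M}_{d+2}(\R^{d})$, and it suffices to bound $\norm{m}_{\mathscr{M}_{d+2}}$ uniformly in $n$ and in the choice of the $\epsilon_{j}$; feeding this bound through the embedding and then through Corollary~\ref{prop:R-bdd_via_Mikhlin} will give the claim.

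For the uniform bound, I would fix $\xi \in \R^{d} \setminus \{0\}$ and a multi-index $\alpha$ with $|\alpha| \leq d+2$. By~(a) there is an open neighborhood $U \subset \R^{d} \setminus \{0\}$ of $\xi$ on which all but at most $N$ of the $m_{j}$ vanish identically; for each such $j$, the derivative $D^{\alpha}m_{j}$ also vanishes identically on $U$, in particular at $\xi$. Hence $D^{\alpha}m(\xi) = \sum_{j \in I}\epsilon_{j}D^{\alpha}m_{j}(\xi)$ with $I := \{\, 0 \leq j \leq n : m_{j}|_{U} \neq 0 \,\}$ of cardinality $\leq N$, so that
\[
|\xi|^{|\alpha|}\,|D^{\alpha}m(\xi)| \;\leq\; \sum_{j \in I}|\xi|^{|\alpha|}\,|D^{\alpha}m_{j}(\xi)| \;\leq\; N\sup_{j \in \N}\norm{m_{j}}_{\mathscr{M}_{d+2}}.
\]
Taking the supremum over $|\alpha| \leq d+2$ and over $\xi \neq 0$ yields $\norm{m}_{\mathscr{M}_{d+2}} \leq N\sup_{j}\norm{m_{j}}_{\mathscr{M}_{d+2}}$, independently of $n$ and of the signs, whence $\norm{m}_{\mathcal{M}_{p,w}(X)} \leq C_{X,p,d}([w]_{A_{p}})\,N\sup_{j}\norm{m_{j}}_{\mathscr{M}_{d+2}}$ and therefore $K \lesssim C_{X,p,d}([w]_{A_{p}})\,N\sup_{j}\norm{m_{j}}_{\mathscr{M}_{d+2}}$. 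Corollary~\ref{prop:R-bdd_via_Mikhlin} then immediately gives the $\mathcal{R}$-boundedness of $\{T_{m_{j}}\}$ together with the displayed two-sided estimate.

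I do not expect a genuine obstacle here. The one point that deserves a line of care is the passage from ``$m_{j}$ vanishes on a neighborhood of $\xi$'' to ``all derivatives of $m_{j}$ up to order $d+2$ vanish at $\xi$'': it is exactly this that collapses the (finite, length $n+1$) sum defining $D^{\alpha}m(\xi)$ to one with at most $N$ summands, and hence produces an estimate that is uniform in $n$ --- which is precisely the form required by \eqref{eq:prop:R-bdd_via_Mikhlin;cond_pm1}.
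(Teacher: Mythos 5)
Your proof is correct and follows exactly the route the paper indicates in the sentence preceding the corollary: verify the hypothesis \eqref{eq:prop:R-bdd_via_Mikhlin;cond_pm1} of Corollary~\ref{prop:R-bdd_via_Mikhlin} by bounding $\|\sum_{j}\epsilon_{j}m_{j}\|_{\mathscr{M}_{d+2}}$ pointwise through the uniform local finiteness in~(a), and then feed this through $\mathscr{M}_{d+2}(\R^{d}) \hookrightarrow \mathcal{M}_{p,w}(X)$. The point you flag --- that $m_{j}|_{U}=0$ forces $D^{\alpha}m_{j}|_{U}=0$ and hence collapses each derivative sum to at most $N$ terms uniformly in $n$ --- is indeed the only step needing care, and you handle it correctly.
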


An example for the 'uniform locally finiteness condition' (a) from the above corollary is a kind of dyadic corona condition on the supports of the symbols:
\begin{ex}\label{ex:prop;R-bddness_fm;nbhd_cond_non-zero_restr;ex_cond(a);dyadic_support}
Suppose that $\{m_{j}\}_{j \in \N} \subset L^{\infty}(\R^{d})$ satisfies the support condition
\begin{equation}\label{eq:ex:prop;R-bddness_fm;nbhd_cond_non-zero_restr;ex_cond(a);dyadic_support;condition}
\supp m_{0} \subset \{ \xi : |\xi| \leq c \} \quad \mbox{and} \quad \supp m_{j} \subset \{ \xi :  c3^{-1}2^{j-J+1} \leq |\xi| \leq c2^{j} \},\, j \geq 1,
\end{equation}
for some $c>0$ and $J \in \Z_{>0}$. Then $\supp m_{j} \cap \supp m_{k} = \emptyset$ for all $j,k \in \N$ with $|j-k| \geq J+1$.
In particular, condition (a) of Corollary~\ref{prop;R-bddness_fm;nbhd_cond_non-zero_restr} is satisfied with $N=J$.
\end{ex}

\begin{ex}\label{ex:prop;R-bddness_fm;nbhd_cond_non-zero_restr;ex_cond_(a)en(b)}
Suppose that $m_{0} \in C^{d+2}_{c}(\R^{d})$ and $m_{1} \in C^{d+2}_{c}(\R^{d} \setminus \{0\})$. Set $m_{j} := m(2^{-j}\,\cdot\,)$ for each $j \geq 2$. Then $\{m_{j}\}_{j \in \N}$ fulfills the conditions (a) and (b) of Corollary~\ref{prop;R-bddness_fm;nbhd_cond_non-zero_restr}, where (a) follows from Example~\ref{ex:prop;R-bddness_fm;nbhd_cond_non-zero_restr;ex_cond(a);dyadic_support} and (b) from the dilation invariance of the Mihlin condition defining $\mathscr{M}_{d+2}(\R^{d})$. In particular, given $\varphi = \{\varphi_{j}\}_{j \in \N} \in \Phi(\R^{d})$, Corollary~\ref{prop;R-bddness_fm;nbhd_cond_non-zero_restr} can be applied to the sequence of symbols $\{m_{j}\}_{j \in \N} = \{\hat{\varphi}_{j}\}_{j \in \N}$, whose associated sequence of Fourier multiplier operators is $\{S_{j}\}_{j \in \N}$.
\end{ex}

Up to now we have only exploited Proposition~\ref{prop:KW_thm3.3}(i) in order to get $\mathcal{R}$-boundedness of a sequence of Fourier multipliers.
However, in many situations the condition \eqref{eq:prop:R-bdd_via_Mikhlin;cond_pm1} is too strong. It is for example not fulfilled by the sequence $\{m_{j} = m(2^{-j}\,\cdot\,) \}_{j \in \N}$, where $m \in C^{\infty}_{c}(\R^{d})$ is a given symbol which is non-zero in the origin; this follows from the fact that $\mathcal{M}_{p,w}(X) \hookrightarrow L^{\infty}(\R^{d})$.
The case that $m$ is constant on a neighborhood of the origin can be handled by the following proposition (see Corollary~\ref{cor:prop:R-bdd_partial_sums;dilations_constant_on_nbhd_origin}), of which the main ingredient is Proposition~\ref{prop:KW_thm3.3}(ii):

\begin{prop}\label{prop:R-bdd_partial_sums}
Let $X$ be a UMD space, $p \in (1,\infty)$ and $w \in A_{p}(\R^{d})$. Let $\{m_{j}\}_{j \in \N} \subset \mathcal{M}_{p,w}(X)$ be a sequence of Fourier multiplier symbols which satisfies the support condition \eqref{eq:ex:prop;R-bddness_fm;nbhd_cond_non-zero_restr;ex_cond(a);dyadic_support;condition} for some $c>0$ and $J \in \N$. Write $T_{j}=T_{m_{j}}$ for the Fourier multiplier operator on $Y=L^{p}(\R^{d},w;X)$ associated with $m_{j}$ for each $j \in \N$.
If
\begin{equation}\label{eq:prop:R-bdd_partial_sums;condition}
K:= \norm{ \{T_{j}\} }_{\mathrm{Rad}(Y)\to Y} \wedge \norm{ \{T_{j}\} }_{Y \to \mathrm{Rad}(Y)}  < \infty,
\end{equation}
then the collection of partial sums $\{ \sum_{j=0}^{n}T_{j} : n \in \N \}$ is $\mathcal{R}$-bounded with $\mathcal{R}$-bound $\leq (2J+1)C_{X,p,d}([w]_{A_{p}})\,K$ for some increasing function $C_{X,p,d}:[1,\infty) \longra (0,\infty)$ only depending on $X$, $p$ and $d$.
\end{prop}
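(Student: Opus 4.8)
The plan is to factor each $T_{j}$ as a product of two Fourier multiplier operators and then apply Proposition~\ref{prop:KW_thm3.3}(ii). First note that, since $X$ is UMD, $Y := L^{p}(\R^{d},w;X)$ has property~$(\Delta)$ with constant $\Delta_{p,Y} = \Delta_{p,X} < \infty$ (recall $\Delta_{p,L^{p}(S;X)} = \Delta_{p,X}$ and that every UMD space has property~$(\Delta)$), so the hypothesis of part~(ii) is met; moreover $\mathscr{M}_{d+2}(\R^{d}) \hookrightarrow \mathcal{M}_{p,w}(X)$, so any symbol in $\mathscr{M}_{d+2}(\R^{d})$ induces a bounded Fourier multiplier operator on $Y$.

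The key step is to construct auxiliary symbols $\{\psi_{j}\}_{j \in \N} \subset \mathscr{M}_{d+2}(\R^{d})$ with $\psi_{j} \equiv 1$ on $\supp m_{j}$ for every $j$ and such that $\{\psi_{j}\}$ again satisfies a dyadic corona support condition of the form \eqref{eq:ex:prop;R-bddness_fm;nbhd_cond_non-zero_restr;ex_cond(a);dyadic_support;condition}, with $c$ and $J$ replaced by suitable larger constants. Concretely, take a radial $\psi_{0} \in C^{\infty}_{c}(\R^{d})$ equal to $1$ on $\{|\xi| \leq c\}$ and supported in $\{|\xi| \leq 2c\}$, together with a radial $\Psi \in C^{\infty}_{c}(\R^{d}\setminus\{0\})$ equal to $1$ on $\{c3^{-1}2^{1-J} \leq |\xi| \leq c\}$ and supported in a slightly wider annulus of the same shape, and put $\psi_{j} := \Psi(2^{-j}\,\cdot\,)$ for $j \geq 1$; then $\psi_{j} \equiv 1$ on the annulus appearing in \eqref{eq:ex:prop;R-bddness_fm;nbhd_cond_non-zero_restr;ex_cond(a);dyadic_support;condition}, hence on $\supp m_{j}$, so that $\psi_{j}m_{j} = m_{j}$. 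Crucially, $\Psi$ can be chosen with $\norm{\Psi}_{\mathscr{M}_{d+2}} \lesssim_{d} 1$ uniformly in $J$: placing the two transition layers of $\Psi$ at the radii $\sim c3^{-1}2^{1-J}$ and $\sim c$ keeps every quantity $|\xi|^{|\alpha|}|D^{\alpha}\Psi(\xi)|$ of order $1$, regardless of the width of the annulus. Since the supports of the $\psi_{j}$ are annuli of a fixed dyadic ratio (of order $2^{J}$), the sequence $\{\psi_{j}\}$ fulfils the uniform local finiteness condition of Corollary~\ref{prop;R-bddness_fm;nbhd_cond_non-zero_restr} with parameter $N \lesssim J$, by Example~\ref{ex:prop;R-bddness_fm;nbhd_cond_non-zero_restr;ex_cond(a);dyadic_support}. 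Writing $T_{\psi_{j}}$ for the Fourier multiplier operator on $Y$ with symbol $\psi_{j}$, Corollaries~\ref{prop:R-bdd_via_Mikhlin} and~\ref{prop;R-bddness_fm;nbhd_cond_non-zero_restr} then give
\[
\norm{\{T_{\psi_{j}}\}}_{\mathrm{Rad}(Y) \to Y} \vee \norm{\{T_{\psi_{j}}\}}_{Y \to \mathrm{Rad}(Y)} \leq \sup_{n}\sup_{\epsilon_{j}=\pm1}\normB{\sum_{j=0}^{n}\epsilon_{j}\psi_{j}}_{\mathcal{M}_{p,w}(X)} \lesssim (2J+1)\,C_{X,p,d}([w]_{A_{p}}).
\]

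To conclude, since $\mathcal{M}_{p,w}(X) \to \mathcal{B}(Y)$ is an algebra homomorphism and $\psi_{j}m_{j} = m_{j}$, we have $T_{m_{j}}T_{\psi_{j}} = T_{\psi_{j}}T_{m_{j}} = T_{j}$, so each partial sum $\sum_{j=0}^{n}T_{j}$ has the form $\sum_{j=0}^{n}U_{j}V_{j}$ required in Proposition~\ref{prop:KW_thm3.3}(ii). By definition of $K$ either $K = \norm{\{T_{j}\}}_{\mathrm{Rad}(Y)\to Y}$ or $K = \norm{\{T_{j}\}}_{Y \to \mathrm{Rad}(Y)}$. In the first case apply part~(ii) with $U_{j} := T_{m_{j}}$, $V_{j} := T_{\psi_{j}}$, so that $C_{1} = K$ and $C_{2} = \norm{\{T_{\psi_{j}}\}}_{Y \to \mathrm{Rad}(Y)}$; in the second case take $U_{j} := T_{\psi_{j}}$, $V_{j} := T_{m_{j}}$, so that $C_{1} = \norm{\{T_{\psi_{j}}\}}_{\mathrm{Rad}(Y)\to Y}$ and $C_{2} = K$. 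Both choices satisfy $C_{1}, C_{2} < \infty$ and, by the previous display, $C_{1}C_{2} \leq (2J+1)\,C_{X,p,d}([w]_{A_{p}})\,K$. Hence Proposition~\ref{prop:KW_thm3.3}(ii) yields that $\{\sum_{j=0}^{n}T_{j} : n \in \N\}$ is $\mathcal{R}$-bounded with $\mathcal{R}$-bound at most $\Delta_{p,Y}C_{1}C_{2} \leq \Delta_{p,X}(2J+1)C_{X,p,d}([w]_{A_{p}})K$; absorbing $\Delta_{p,X}$ into $C_{X,p,d}$ gives the claimed bound.

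The main difficulty lies in the construction and bookkeeping of the $\psi_{j}$: one must arrange simultaneously that $\psi_{j} = 1$ on $\supp m_{j}$, that $\{\psi_{j}\}$ is bounded in $\mathscr{M}_{d+2}(\R^{d})$ uniformly in $J$, and that its local-finiteness parameter grows only linearly in $J$, for otherwise the final $\mathcal{R}$-bound would fail to carry the factor $2J+1$. Everything else is a straightforward application of Proposition~\ref{prop:KW_thm3.3}(ii) together with the Fourier multiplier facts recorded above.
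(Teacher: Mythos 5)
Your proof is correct, and it follows the same overall strategy as the paper's: factor each $T_{j}$ as a product $U_{j}V_{j}$ of two commuting Fourier multiplier operators, one of which acts as the identity on the range of the other, and then invoke Proposition~\ref{prop:KW_thm3.3}(ii). The difference is in how the auxiliary operators are built and estimated. The paper first normalizes to $c=\tfrac{3}{2}$ by scaling invariance of the $A_{p}$-characteristic, takes a Littlewood--Paley system $\varphi\in\Phi_{1,3/2}(\R^{d})$, sets $R_{j}:=\sum_{\ell=-J}^{J}S_{j+\ell}$, and obtains $\norm{\{R_{j}\}}_{\mathrm{Rad}(Y)\to Y}\vee\norm{\{R_{j}\}}_{Y\to\mathrm{Rad}(Y)}\leq(2J+1)\tilde C_{X,p,d}([w]_{A_{p}})$ by applying the triangle inequality to the already-bounded family $\{S_{j}\}$; the factor $2J+1$ is literally the number of summands. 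You instead build a single wide annular cutoff $\psi_{j}=\Psi(2^{-j}\,\cdot\,)$ whose Mihlin norm is uniformly controlled (because the Mihlin condition is dilation-invariant and each of the two transition layers sits at the correct scale), and extract the factor $\sim J$ from the overlap count in Corollary~\ref{prop;R-bddness_fm;nbhd_cond_non-zero_restr}. Both routes deliver the same bound up to absolute constants. Your version avoids the preliminary rescaling step and the telescoping sum, at the cost of having to argue carefully that $\norm{\Psi}_{\mathscr{M}_{d+2}}$ stays bounded as $J$ grows and that the local-finiteness parameter $N$ is $O(J)$; the paper's version is perhaps slightly more mechanical once the LP machinery is in hand, since the $J$-dependence drops out of a one-line triangle-inequality estimate. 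The logic is otherwise identical: $T_{j}T_{\psi_{j}}=T_{\psi_{j}}T_{j}=T_{j}$ corresponds to $T_{j}R_{j}=R_{j}T_{j}=T_{j}$, and the two choices of $(U_{j},V_{j})$ according to which of the two quantities in \eqref{eq:prop:R-bdd_partial_sums;condition} realizes the minimum are exactly the cases the paper considers.
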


\begin{proof}
Due to scaling invariance of the $A_{p}$-characteristic, we may without loss of generality assume that $c=\frac{3}{2}$.
Fix $\varphi = (\varphi_{j})_{j \in \N} \in \Phi_{1,\frac{3}{2}}(\R^{d})$ and denote by $\{S_{j}\}_{j \in \N}$ the corresponding convolution operators. For convenience of notation we put $\varphi_{j}:= 0$ and  $S_{j}:=0$ for every $j \in \Z_{<0}$. For each $j \in \N$ we define $R_{j} := \sum_{\ell=-J}^{J}S_{j+\ell}$. By Example~\ref{ex:prop;R-bddness_fm;nbhd_cond_non-zero_restr;ex_cond_(a)en(b)} (and Corollary~\ref{prop;R-bddness_fm;nbhd_cond_non-zero_restr}), there exists an increasing function $\tilde{C}_{X,p,d}:[1,\infty) \longra (0,\infty)$, only depending on $X$, $p$ and $d$, such that
\[
\norm{ \{S_{j}\} }_{\mathrm{Rad}(Y)\to Y} \vee \norm{ \{S_{j}\} }_{Y \to \mathrm{Rad}(Y)} \stackrel{\eqref{eq:prop:KW_thm3.3;(i);1e_verg},\eqref{eq:prop:KW_thm3.3;(i);3e_verg}}{\leq} \tilde{C}_{X,p,d}([w]_{A_{p}}),
\]
and thus
\begin{equation}\label{eq:prop:R-bdd_partial_sums;bound_tilde_Sj}
\norm{ \{R_{j}\} }_{\mathrm{Rad}(Y)\to Y} \vee \norm{ \{R_{j}\} }_{Y \to \mathrm{Rad}(Y)} \leq (2J+1)\tilde{C}_{X,p,d}([w]_{A_{p}}).
\end{equation}
As a consequence of the support condition \eqref{eq:ex:prop;R-bddness_fm;nbhd_cond_non-zero_restr;ex_cond(a);dyadic_support;condition} and the fact that
\[
\sum_{\ell=-J}^{J}\hat{\varphi}_{\ell}(\xi) = 1 \:\: \mbox{for} \:\: |\xi| \leq \frac{3}{2} \quad \mbox{and} \quad
\sum_{\ell=-J}^{J}\hat{\varphi}_{j+\ell}(\xi) = 1 \:\: \mbox{for} \:\: 2^{j-J} \leq |\xi| \leq \frac{3}{2}2^{j},\, j \geq 1,
\]
we have $T_{j}R_{j} = R_{j}T_{j}=T_{j}$ for every $j \in \N$.
Since $\{T_{j}\}$ and $\{R_{j}\}$ are commuting and since $\Delta_{Y} \eqsim_{p} \Delta_{Y,p}=\Delta_{X,p} < \infty$ ($X$ being a UMD space), the required $\mathcal{R}$-bound follows from an application of Proposition~\ref{prop:KW_thm3.3}(ii) with either $U_{j} = T_{j}$ and $V_{j}=R_{j}$ or $U_{j}=R_{j}$ and $V_{j}=T_{j}$.
\end{proof}

\begin{remark}
The condition \eqref{eq:prop:R-bdd_partial_sums;condition} in Proposition~\ref{prop:R-bdd_partial_sums} may be replaced by the condition that $\{T_{j}\}$ is $\mathcal{R}$-bounded with $\mathcal{R}$-bound $K$: under this modification, it can be shown that the collection of partial sums is $\mathcal{R}$-bounded with $\mathcal{R}$-bound $\leq (2J+1)^{2}C_{X,p,d}([w]_{A_{p}})\,K$ for some increasing function $C_{X,p,d}:[1,\infty) \longra (0,\infty)$ only depending on $X$, $p$ and $d$. Indeed, in the notation of the proof above, we have
\begin{eqnarray*}
\norm{ \{T_{j}\} }_{\mathrm{Rad}(Y)\to Y}
&=& \norm{ \{R_{j}T_{j}\} }_{\mathrm{Rad}(Y)\to Y} \stackrel{\eqref{eq:prop:KW_thm3.3;(i);2e_verg}}{\leq} \norm{\{R_{j}\}}_{\mathrm{Rad}(\mathcal{B}(Y))} \mathcal{R}(\{T_{j}\}) \\
&\stackrel{\eqref{eq:prop:R-bdd_partial_sums;bound_tilde_Sj}}{\leq}&  (2J+1)\tilde{C}_{X,p,d}([w]_{A_{p}})\mathcal{R}(\{T_{j}\}).
\end{eqnarray*}

An alternative approach for the $\mathcal{R}$-boundedness condition would be to modify the proof of \cite[Theorem~3.9]{Clement&Pagter&Suckochev&Witvliet_Schauder_decompositions} (or \cite[Theorem~2.4.3]{Witvliet_PHD-thesis}), which is a generalization of the vector-valued Stein inequality to the setting of unconditional Schauder decompositions.
Via this approach one would get linear dependence on $J$ instead of quadratic.
\end{remark}

\begin{cor}\label{cor:prop:R-bdd_partial_sums;dilations_constant_on_nbhd_origin}
Let $X$ be a UMD space, $p \in (1,\infty)$ and $w \in A_{p}(\R^{d})$. Suppose that $M \in C^{d+2}_{c}(\R^{d})$ is constant on a neighborhood of $0$ and put $M_{j}:=M(2^{-j}\,\cdot\,)$ for each $j \in \Z$. Then $\{M_{j}\}_{j \in \Z}$ defines an $\mathcal{R}$-bounded sequence of Fourier multiplier operators $\{T_{M_{j}}\}_{j \in \Z}$ in $\mathcal{B}(L^{p}(\R^{d},w;X))$ with $\mathcal{R}$-bound $\lesssim_{M}C_{X,p,d}([w]_{A_{p}})$, where $C_{X,p,d}$ is the function from Proposition~\ref{prop:R-bdd_partial_sums}.
\end{cor}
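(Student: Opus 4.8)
The plan is to peel off from $M$ a low-frequency bump that is the first member of a Littlewood--Paley sequence, thereby reducing the claim to the $\mathcal{R}$-boundedness of the partial sums of such a sequence, which is exactly Proposition~\ref{prop:R-bdd_partial_sums}. So I would first choose $\varphi = (\varphi_n)_{n \in \N} \in \Phi(\R^d)$ with $\hat{\varphi}_0 \equiv 1$ on a ball containing $\supp M$, and set $N := M - M(0)\hat{\varphi}_0$. Because $M$ and $\hat{\varphi}_0$ are constant near $0$ (equal to $M(0)$, resp.\ $1$), the symbol $N$ vanishes near $0$; as $N \in C^{d+2}_c(\R^d)$, this gives $N \in C^{d+2}_c(\R^d \setminus \{0\})$. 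For each $j \in \Z$,
\[
T_{M(2^{-j}\cdot)} = M(0)\,T_{\hat{\varphi}_0(2^{-j}\cdot)} + T_{N(2^{-j}\cdot)},
\]
so by subadditivity of the $\mathcal{R}$-bound it is enough to treat $\{T_{N(2^{-j}\cdot)}\}_{j\in\Z}$ and $\{T_{\hat{\varphi}_0(2^{-j}\cdot)}\}_{j\in\Z}$ separately.

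For $\{N(2^{-j}\cdot)\}_{j\in\Z}$: the supports $2^j\supp N$ lie in dyadic coronas of fixed logarithmic width, so condition~(a) of Corollary~\ref{prop;R-bddness_fm;nbhd_cond_non-zero_restr} holds (as in Example~\ref{ex:prop;R-bddness_fm;nbhd_cond_non-zero_restr;ex_cond(a);dyadic_support}), while (b) holds because $\norm{N(2^{-j}\cdot)}_{\mathscr{M}_{d+2}} = \norm{N}_{\mathscr{M}_{d+2}}$ by dilation invariance of the Mihlin condition; relabelling $\Z$ as $\N$ affects neither the hypotheses nor the $\mathcal{R}$-bound. Hence this family is $\mathcal{R}$-bounded with bound $\lesssim_M C_{X,p,d}([w]_{A_p})$.

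For $\{T_{\hat{\varphi}_0(2^{-j}\cdot)}\}_{j\in\Z}$ I would split at $j=0$. If $j\ge0$, the telescoping identity $\hat{\varphi}_n = \hat{\varphi}_0(2^{-n}\cdot) - \hat{\varphi}_0(2^{-n+1}\cdot)$ gives $\hat{\varphi}_0(2^{-j}\cdot) = \sum_{n=0}^j \hat{\varphi}_n$, hence $T_{\hat{\varphi}_0(2^{-j}\cdot)} = \sum_{n=0}^j S_n$. By \eqref{eq:Fourier_support_LP-seq} the sequence $\{\hat{\varphi}_n\}_{n\in\N}$ obeys the support condition \eqref{eq:ex:prop;R-bddness_fm;nbhd_cond_non-zero_restr;ex_cond(a);dyadic_support;condition} (for a suitable $J$), and $\norm{\{S_n\}}_{\mathrm{Rad}(Y)\to Y} \lesssim_M C_{X,p,d}([w]_{A_p})$ follows from Example~\ref{ex:prop;R-bddness_fm;nbhd_cond_non-zero_restr;ex_cond_(a)en(b)}, Corollary~\ref{prop;R-bddness_fm;nbhd_cond_non-zero_restr} and \eqref{eq:prop:KW_thm3.3;(i);1e_verg}, where $Y = L^p(\R^d,w;X)$. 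Proposition~\ref{prop:R-bdd_partial_sums} then yields the $\mathcal{R}$-boundedness of $\{\sum_{n=0}^j S_n : j\ge0\}$ with the required bound.

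The one subtle point is the regime $j<0$, where $\hat{\varphi}_0(2^{-j}\cdot) = \hat{\varphi}_0(2^{|j|}\cdot)$ is a bump that equals $1$ near $0$ but whose support shrinks to $\{0\}$; the family $\{\hat{\varphi}_0(2^{|j|}\cdot)\}_{|j|\ge1}$ therefore violates the uniform local finiteness needed for Corollary~\ref{prop;R-bddness_fm;nbhd_cond_non-zero_restr}, and the telescoping $\hat{\varphi}_0(2^{|j|}\cdot) = \hat{\varphi}_0 - \sum_{n=0}^{|j|-1}\hat{\psi}_n$ with $\hat{\psi}_n := \hat{\varphi}_0(2^n\cdot) - \hat{\varphi}_0(2^{n+1}\cdot) \in C^{d+2}_c(\R^d\setminus\{0\})$ reduces matters to the $\mathcal{R}$-boundedness of the partial sums $\{\sum_{n=0}^m T_{\hat{\psi}_n}\}_{m\in\N}$, where now the supports lie in coronas that \emph{shrink} as $n\to\infty$ — so Proposition~\ref{prop:R-bdd_partial_sums} does not literally apply. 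I would settle this by the evident co-dyadic analogue of Proposition~\ref{prop:R-bdd_partial_sums}, proved in exactly the same way: replace $\Phi_{1,3/2}$ by the dilation-reversed Littlewood--Paley family (each $\hat{\varphi}_n$ replaced by $\hat{\varphi}_0(2^n\cdot) - \hat{\varphi}_0(2^{n+1}\cdot)$), which, $X$ being UMD, still induces an unconditional Schauder decomposition of $Y$, and re-run the argument through Proposition~\ref{prop:KW_thm3.3}(ii); the hypothesis $\norm{\{T_{\hat{\psi}_n}\}}_{\mathrm{Rad}(Y)\to Y} < \infty$ is verified exactly as in the previous paragraph, since Corollary~\ref{prop;R-bddness_fm;nbhd_cond_non-zero_restr} is insensitive to whether the coronas grow or shrink. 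Adding the three contributions, all with implicit constants $\lesssim_M C_{X,p,d}([w]_{A_p})$, completes the proof; the main obstacle is precisely this "zooming-in" direction.
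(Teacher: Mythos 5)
Your identification of the ``zooming-in'' regime $j<0$ as the crux, and your decomposition $M = M(0)\hat{\varphi}_0 + N$ with the corona family $\{N(2^{-j}\cdot)\}_{j\in\Z}$ handled directly by Corollary~\ref{prop;R-bddness_fm;nbhd_cond_non-zero_restr}, are both correct. But the paper disposes of the $j<0$ direction by a much lighter device than your proposed co-dyadic analogue of Proposition~\ref{prop:R-bdd_partial_sums}: it exploits the \emph{scaling invariance of the $A_p$-characteristic}. Since the asserted $\mathcal{R}$-bound depends on $w$ only through $[w]_{A_p}$, which is invariant under $w\mapsto w(\lambda\,\cdot\,)$, and since conjugation by the dilation $D_{2^K}f = f(2^K\cdot)$ intertwines $T_{M_j}$ on $L^p(\R^d,w(2^K\cdot);X)$ with $T_{M_{j-K}}$ on $L^p(\R^d,w;X)$, the $\N$-indexed claim for \emph{all} $w\in A_p$ (with a bound that is a function of $[w]_{A_p}$ alone) automatically upgrades to the $\{j\geq K\}$-indexed claim for every $K\in\Z$, and hence to the $\Z$-indexed claim. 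After this reduction the paper applies Proposition~\ref{prop:R-bdd_partial_sums} directly to the telescoping $m_0 = M$, $m_j = M(2^{-j}\cdot)-M(2^{-j+1}\cdot)$ ($j\geq1$), with no need to peel off the low-frequency bump at all. Your co-dyadic version of Proposition~\ref{prop:R-bdd_partial_sums} should indeed hold --- the proof of that proposition uses only the finite partial products $R_j = \sum_{\ell=-J}^J S_{j+\ell}$, their $\mathrm{Rad}$ bounds (which come from Corollary~\ref{prop;R-bddness_fm;nbhd_cond_non-zero_restr} and are insensitive to the orientation of the coronas), and Proposition~\ref{prop:KW_thm3.3}(ii); the unconditional Schauder decomposition you invoke is not actually needed there --- but it is an extra lemma that the scaling argument renders unnecessary, and in any case it should be proved rather than asserted. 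In short: your route is sound modulo proving that co-dyadic lemma, but the dilation invariance of $[w]_{A_p}$ lets you sidestep it entirely, which is what the paper does.
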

\begin{proof}
By the scaling invariance of the $A_{p}$-characteristic, it suffices to prove the $\mathcal{R}$-boundedness statement for $\{M_{j}\}_{j \in \N}$ instead of $\{M_{j}\}_{j \in \Z}$. Indeed, for each $K \in \Z_{<0}$ we then in particular have that $\{M_{j}\}_{j \in \N}$ defines an $\mathcal{R}$-bounded sequence of Fourier multiplier operators $\{T_{M_{j}}\}_{j \in \N}$ in $\mathcal{B}(L^{p}(\R^{d},w(2^{-K}\,\cdot\,);X))$ with $\mathcal{R}$-bound $\lesssim_{M}C_{X,p,d}([w]_{A_{p}})$, or equivalently, that $\{M_{j}\}_{j \geq K}$ defines an $\mathcal{R}$-bounded sequence of Fourier multiplier operators $\{T_{M_{j}}\}_{j \geq K}$ in $\mathcal{B}(L^{p}(\R^{d},w(2^{K}\,\cdot\,);X))$ with $\mathcal{R}$-bound $\lesssim_{M}C_{X,p,d}([w]_{A_{p}})$.

Define the sequence of symbols $\{m_{j}\}_{j \in \N}$ by $m_{0}:=M$, $m_{1}:= m_{0}(2^{-1}\,\cdot\,) - m_{0}$, and $m_{j}:= m_{1}(2^{-j+1}\,\cdot\,)$ for $j \geq 2$. Then $\{m_{j}\}_{j \in \N}$ is a bounded sequence in $\mathscr{M}_{d+2}$ which satisfies the support condition \eqref{eq:ex:prop;R-bddness_fm;nbhd_cond_non-zero_restr;ex_cond(a);dyadic_support;condition}. By a combination of Corollary~\ref{prop;R-bddness_fm;nbhd_cond_non-zero_restr}, Example~\ref{ex:prop;R-bddness_fm;nbhd_cond_non-zero_restr;ex_cond(a);dyadic_support} and Proposition~\ref{prop:R-bdd_partial_sums}, the collection of partial sums $\{ T_{M_{i}} : i \in \N \} = \{\sum_{k=0}^{i}T_{m_{k}} : i \in \N \}$ is $\mathcal{R}$-bounded in $\mathcal{B}(L^{p}(\R^{d},w;X))$ (with the required dependence of the $\mathcal{R}$-bound).
\end{proof}

With the following theorem we can in particular treat dilations of symbols $M$ belonging to the Schwartz class $\mathcal{S}(\R^{d})$ without any further restrictions. Note that this would be immediate from Proposition~\ref{prop:R-bdd_FM_alpha}(i) in case of property $(\alpha)$.

\begin{thm}\label{thm:R-bdd_dilations_FM;general_UMD}
Let $X$ be a UMD space, $p \in (1,\infty)$ and $w \in A_{p}(\R^{d})$.
Let $M \in C(\R^{d}) \cap C^{d+2}(\R^{d} \setminus \{0\})$ and set $M_{j}:=M(2^{-j}\,\cdot\,)$ for each $j \in \Z$.
Suppose that there exist $\delta_{0},\delta_{\infty}>0$ such that
\begin{equation}\label{thm:R-bdd_dilations_FM;general_UMD;cond0}
C_{0}  := \sup_{0<|\xi|\leq 1}|\xi|^{-\delta_{0}}|M(\xi)-M(0)| \:\vee\: \sup_{1 \leq |\alpha| \leq d+2}\sup_{0<|\xi| \leq 1}|\xi|^{|\alpha|-\delta_{0}}\big|D^{\alpha}M(\xi)\big| < \infty
\end{equation}
and
\begin{equation}\label{thm:R-bdd_dilations_FM;general_UMD;cond_infty}
C_{\infty} := \sup_{|\alpha| \leq d+2}\sup_{|\xi| \geq 1}|\xi|^{|\alpha|+\delta_{\infty}}|D^{\alpha}M(\xi)| < \infty.
\end{equation}
Then $\{M_{j}\}_{j \in \Z}$ defines an $\mathcal{R}$-bounded sequence of Fourier multiplier operators $\{T_{M_{j}}\}_{j \in \Z}$ in $\mathcal{B}(L^{p}(\R^{d},w;X))$ with $\mathcal{R}$-bound $\leq C_{X,d,p,\delta_{0},\delta_{\infty}}([w]_{A_{p}})[\norm{M}_{\infty} \vee C_{0} \vee C_{\infty}]$, where $C_{X,d,p,\delta_{0},\delta_{\infty}}:[1,\infty) \longra (0,\infty)$ is some increasing function only depending on $X$, $p$, $d$, $\delta_{0}$ and $\delta_{\infty}$.
\end{thm}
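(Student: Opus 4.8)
The plan is to split $M$ into a part that is constant near the origin --- to which Corollary~\ref{cor:prop:R-bdd_partial_sums;dilations_constant_on_nbhd_origin} applies directly --- plus an absolutely convergent series of parts supported in fixed dyadic annuli, each of which, under the dilations $2^{-j}\,\cdot\,$, is a uniformly locally finite family of Mihlin symbols and is therefore covered by Corollary~\ref{prop;R-bddness_fm;nbhd_cond_non-zero_restr}. The conditions \eqref{thm:R-bdd_dilations_FM;general_UMD;cond0} and \eqref{thm:R-bdd_dilations_FM;general_UMD;cond_infty} (which in particular force $\norm{M}_{\infty} \leq |M(0)| + C_{0} + C_{\infty} < \infty$) are exactly what makes the resulting series of $\mathcal{R}$-bounds a convergent geometric series.

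Concretely, I would fix $\chi \in C^{\infty}_{c}(\R^{d})$ with $\chi \equiv 1$ on $\{|\xi| \leq 1\}$ and $\supp \chi \subset \{|\xi| \leq 2\}$, put $\psi := \chi - \chi(2\,\cdot\,)$ (so $\supp \psi \subset \{\tfrac{1}{2} \leq |\xi| \leq 2\}$ and $\sum_{k \in \Z}\psi(2^{-k}\xi) = 1$ for $\xi \neq 0$ by telescoping), and set $N := M - M(0)\chi$ and $N^{(k)} := N \cdot \psi(2^{-k}\,\cdot\,)$ for $k \in \Z$. Then $N \in C(\R^{d}) \cap C^{d+2}(\R^{d}\setminus\{0\})$ with $N(0)=0$, and $\supp N^{(k)} \subset \{2^{k-1} \leq |\xi| \leq 2^{k+1}\}$. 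The first step is to estimate $\norm{N^{(k)}}_{\mathscr{M}_{d+2}}$. For $k \leq -1$ the support of $N^{(k)}$ lies in $\{|\xi| \leq 1\}$, where $N = M - M(0)$; combining \eqref{thm:R-bdd_dilations_FM;general_UMD;cond0} with the Leibniz rule and $|D^{\beta}[\psi(2^{-k}\,\cdot\,)]| \lesssim 2^{-k|\beta|}$ one obtains $|\xi|^{|\alpha|}|D^{\alpha}N^{(k)}(\xi)| \lesssim_{d} C_{0}\,2^{k\delta_{0}}$ for $|\alpha| \leq d+2$, i.e.\ $\norm{N^{(k)}}_{\mathscr{M}_{d+2}} \lesssim_{d} C_{0}\,2^{k\delta_{0}}$. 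Symmetrically, once $k$ exceeds a constant $k_{0}$ depending only on $\chi$, we have $\supp N^{(k)} \subset \{|\xi| \geq 1\}$ with $N = M$ there, so \eqref{thm:R-bdd_dilations_FM;general_UMD;cond_infty} gives $\norm{N^{(k)}}_{\mathscr{M}_{d+2}} \lesssim_{d} C_{\infty}\,2^{-k\delta_{\infty}}$. Each of the finitely many remaining $k$ (with $-1 < k \leq k_{0}$) satisfies $\norm{N^{(k)}}_{\mathscr{M}_{d+2}} \lesssim_{d} \norm{M}_{\infty} \vee C_{0} \vee C_{\infty}$, since $N$ is $C^{d+2}$ on the fixed compact annulus $\{\tfrac{1}{2} \leq |\xi| \leq 2^{k_{0}+1}\}$ with $C^{d+2}$-norm $\lesssim_{d} \norm{M}_{\infty}+C_{0}+C_{\infty}$ there. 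Summing the two geometric series $\sum_{k \leq -1}2^{k\delta_{0}}$ and $\sum_{k > k_{0}}2^{-k\delta_{\infty}}$ --- which is where $\delta_{0},\delta_{\infty}>0$ is used --- yields $\sum_{k \in \Z}\norm{N^{(k)}}_{\mathscr{M}_{d+2}} \lesssim_{d,\delta_{0},\delta_{\infty}} \norm{M}_{\infty}\vee C_{0}\vee C_{\infty} < \infty$.

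Next, using $\mathscr{M}_{d+2}(\R^{d}) \hookrightarrow \mathcal{M}_{p,w}(X)$ with norm $\leq C_{X,p,d}([w]_{A_{p}})$ together with the summability just established (the tail $\sum_{|k|>K}N^{(k)}$ having $\mathscr{M}_{d+2}$-norm $\to 0$), I would argue that $N = \sum_{k}N^{(k)}$ in $\mathscr{M}_{d+2}(\R^{d})$, hence $M(2^{-j}\,\cdot\,) = M(0)\chi(2^{-j}\,\cdot\,) + \sum_{k}N^{(k)}(2^{-j}\,\cdot\,)$ with convergence in $\mathcal{M}_{p,w}(X)$, and therefore $T_{M_{j}} = M(0)T_{\chi(2^{-j}\,\cdot\,)} + \sum_{k}T_{N^{(k)}(2^{-j}\,\cdot\,)}$ in $\mathcal{B}(Y)$, $Y := L^{p}(\R^{d},w;X)$, uniformly in $j \in \Z$. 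Applying the triangle inequality in $L^{2}(\Omega;Y)$ inside the definition of the $\mathcal{R}$-bound then gives
\[
\mathcal{R}(\{T_{M_{j}}\}_{j \in \Z}) \leq |M(0)|\,\mathcal{R}(\{T_{\chi(2^{-j}\,\cdot\,)}\}_{j \in \Z}) + \sum_{k \in \Z}\mathcal{R}(\{T_{N^{(k)}(2^{-j}\,\cdot\,)}\}_{j \in \Z}).
\]
The first term is handled by Corollary~\ref{cor:prop:R-bdd_partial_sums;dilations_constant_on_nbhd_origin} applied to $\chi$ (constant near $0$), giving a bound $\lesssim_{d} C_{X,p,d}([w]_{A_{p}})$, and $|M(0)| \leq \norm{M}_{\infty}$. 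For the $k$-th summand, the dilates $N^{(k)}(2^{-j}\,\cdot\,)$, $j \in \Z$, are supported in $\{2^{k+j-1} \leq |\xi| \leq 2^{k+j+1}\}$, so any $\xi \neq 0$ has a punctured neighbourhood inside a single dyadic shell $\{2^{m-1} < |\xi| < 2^{m}\}$ meeting at most two of these supports --- condition (a) of Corollary~\ref{prop;R-bddness_fm;nbhd_cond_non-zero_restr} with $N=2$ --- while dilation invariance of the Mihlin norm gives $\norm{N^{(k)}(2^{-j}\,\cdot\,)}_{\mathscr{M}_{d+2}} = \norm{N^{(k)}}_{\mathscr{M}_{d+2}}$, which is condition (b); splitting into $j \geq 0$ and $j < 0$ to match the $\N$-indexed formulation of that corollary, and recombining via $\mathcal{R}(\mathcal{T}_{1}\cup\mathcal{T}_{2}) \leq \mathcal{R}(\mathcal{T}_{1})+\mathcal{R}(\mathcal{T}_{2})$, yields $\mathcal{R}(\{T_{N^{(k)}(2^{-j}\,\cdot\,)}\}_{j \in \Z}) \lesssim C_{X,p,d}([w]_{A_{p}})\norm{N^{(k)}}_{\mathscr{M}_{d+2}}$. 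Plugging in the Mihlin-norm bounds from the previous paragraph and absorbing all constants into one increasing function of $[w]_{A_{p}}$ gives the asserted estimate.

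The only genuinely computational point --- and the one I expect to be the main obstacle --- is the derivative bookkeeping for $\norm{N^{(k)}}_{\mathscr{M}_{d+2}}$ in the pure-Hölder regime $k \leq -1$ and the pure-decay regime $k > k_{0}$: one must verify that the powers of $2$ coming from \eqref{thm:R-bdd_dilations_FM;general_UMD;cond0}/\eqref{thm:R-bdd_dilations_FM;general_UMD;cond_infty} (essentially $2^{k(\delta_{0}-|\beta|)}$, resp.\ $2^{-k(|\beta|+\delta_{\infty})}$, at the scale $|\xi|\sim 2^{k}$), from the localiser $\psi(2^{-k}\,\cdot\,)$ (a factor $2^{-k(|\alpha|-|\beta|)}$ per Leibniz term), and from the weight $|\xi|^{|\alpha|}\sim 2^{k|\alpha|}$ in the Mihlin norm, all telescope to the single summable factor $2^{\pm k\delta_{0/\infty}}$. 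Everything else is a routine assembly of the already-established Corollaries~\ref{prop;R-bddness_fm;nbhd_cond_non-zero_restr} and~\ref{cor:prop:R-bdd_partial_sums;dilations_constant_on_nbhd_origin} together with the elementary subadditivity of the $\mathcal{R}$-bound and its countable, uniformly convergent version used above.
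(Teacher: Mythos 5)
Your proof is correct, but it takes a genuinely different route from the one in the paper. The paper splits $M$ into only three pieces, $M = M(0)\zeta + \zeta(M-M(0)\zeta) + (1-\zeta)(M-M(0)\zeta)$, handles the first by Corollary~\ref{cor:prop:R-bdd_partial_sums;dilations_constant_on_nbhd_origin}, and for the other two verifies the $\pm1$-sum condition \eqref{eq:prop:R-bdd_via_Mikhlin;cond_pm1} of Corollary~\ref{prop:R-bdd_via_Mikhlin} directly: fixing a sign vector, the supports of the dilates $M^{[i]}(2^{-j}\,\cdot\,)$ mean that at any frequency $\xi$ only those $j$ with $2^{-j}\xi$ in the (bounded, resp.\ unbounded) support contribute, and the contributions sum geometrically in $j$ thanks to the $|\xi|^{\delta_{0}}$ resp.\ $|\xi|^{-\delta_{\infty}}$ decay, giving a single uniform $\mathscr{M}_{d+2}$-bound. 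You instead perform a full Littlewood--Paley decomposition $M = M(0)\chi + \sum_{k}N^{(k)}$ of the symbol, invoke Corollary~\ref{prop;R-bddness_fm;nbhd_cond_non-zero_restr} for each fixed $k$ (where the dilates of $N^{(k)}$ form a uniformly locally finite annular family with dilation-invariant Mihlin norm), and then sum the resulting $\mathcal{R}$-bounds over $k$; here the geometric decay is in the LP index $k$ rather than in the dilation parameter $j$, and you additionally need the countable subadditivity of the $\mathcal{R}$-bound together with convergence of the partial sums $\sum_{|k|\leq K}N^{(k)}\to N$ in $\mathscr{M}_{d+2}$ (which indeed holds, since by local finiteness the tail $\sum_{|k|>K}N^{(k)}$ has $\mathscr{M}_{d+2}$-norm controlled by $\sup_{|k|>K}\norm{N^{(k)}}_{\mathscr{M}_{d+2}}\to 0$). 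What the paper's coarser decomposition buys is that it stays entirely within the $\pm1$-sum framework of Corollary~\ref{prop:R-bdd_via_Mikhlin} and needs neither the infinite subadditivity nor a convergence argument; what your refined decomposition buys is a cleaner conceptual picture — the hypotheses are precisely the statement that the dyadic pieces of $M$ have summable Mihlin norms — and it makes the two assumptions \eqref{thm:R-bdd_dilations_FM;general_UMD;cond0}, \eqref{thm:R-bdd_dilations_FM;general_UMD;cond_infty} enter entirely symmetrically. Both yield the same quantitative dependence on $[w]_{A_{p}}$, $\delta_{0}$, $\delta_{\infty}$.
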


\begin{remark}\label{rmk:thm:R-bdd_dilations_FM;general_UMD}
In the proof of Theorem~\ref{thm:R-bdd_dilations_FM;general_UMD} we use the Mihlin multiplier theorem $\mathscr{M}_{d+2} \hookrightarrow \mathcal{M}_{p,w}(X)$. The availability of better multiplier theorems would lead to weaker conditions on $M$.
For example, using the classical Mihlin multiplier condition $|D^{\alpha}m| \lesssim |\xi|^{|\alpha|}$, $\alpha \in \{0,1\}^{d}$, we could treat symbols $M \in C(\R^{d}) \cap C^{d}(\R^{d} \setminus \{0\})$ satisfying \eqref{thm:R-bdd_dilations_FM;general_UMD;cond0} and \eqref{thm:R-bdd_dilations_FM;general_UMD;cond_infty} with the suprema taken over $\alpha \in \{0,1\}^{d}$ instead of $|\alpha| \leq d+2$;
as in the unweighted case, for $w \in A_{p}^{rec}(\R^{d})$ it can be shown that this classical Mihlin condition is sufficient for $m$ to be a Fourier multiplier on $L^{p}(\R^{d},w;X)$ (see \cite[Chapter~4]{Lindemulder_master-thesis}). In the unweighted case one could even use multiplier theorems which incorporate information of the Banach space under consideration \cite{Girardi&Weis_FM_geometry_BS,Hytonen_Fourier_embedding&Mihlin_type_multiplier_theorems}.
In Theorem~\ref{thm:R-bdd_dilations_via_Mihlin-Holder} (and Corollary~\ref{cor:thm:R-bdd_dilations_via_Mihlin-Holder}) we will actually use the Mihlin-H\"older condition from \cite[Theorem~3.1]{Hytonen_Fourier_embedding&Mihlin_type_multiplier_theorems} (which is weaker than the Mihlin-H\"ormander condition) for the one-dimensional case $d=1$.
\end{remark}

\begin{proof}
As in the proof of Corollary~\ref{cor:prop:R-bdd_partial_sums;dilations_constant_on_nbhd_origin}, it is enough to establish the $\mathcal{R}$-boundedness of $\{M_{j}\}_{j \in \N}$. Put $C:= \norm{M}_{\infty} \vee C_{0} \vee C_{\infty}$.
Pick $\zeta \in C^{\infty}_{c}(R^{d})$ with the property that $\chi(\xi)=1$ if $|\xi| \leq 1$ and $\zeta(\xi)=0$ if $|\xi| \geq 3/2$.
Then
\[
M  := M(0)\zeta + \zeta(M-M(0)\zeta) + (1-\zeta)(M-M(0)\zeta) =: M^{[1]} + M^{[2]} + M^{[3]}.
\]
For each $i \in \{1,2,3\}$ we define $\{M^{[i]}_{j}\}_{j \in \N}$ by $M^{[i]}_{j} := M^{i}(2^{-j}\,\cdot\,)$.
By Corollary~\ref{cor:prop:R-bdd_partial_sums;dilations_constant_on_nbhd_origin}, $\{M^{[1]}_{j}\}_{j \in \N}$ defines an $\mathcal{R}$-bounded sequence of Fourier multiplier operators in $\mathcal{B}(L^{p}(\R^{d},w;X))$ with $\mathcal{R}$-bound $\lesssim_{X,d,p,w,\zeta}|M(0)| \leq C$.
In order to get $\mathcal{R}$-boundedness for $i=2,3$ we use Corollary~\ref{prop:R-bdd_via_Mikhlin} (in combination with $\mathscr{M}_{d+2} \hookrightarrow \mathcal{M}_{p,w}(X)$).
To this end, let $\epsilon = \{\epsilon_{j}\}_{j=0}^{N} \in \{-1,1\}^{N+1}$, $N \in \N$, and put $M^{[i]}_{\epsilon} := \sum_{j=0}^{N}\epsilon_{j}M^{[i]}_{j}$ for each $i \in \{2,3\}$.
In order to obtain a uniform bound for $M^{[i]}_{\epsilon}$ in $\mathscr{M}_{d+2}$, we note that:
\begin{itemize}
\item $M^{[2]} \in C(\R^{d}) \cap C^{d+2}(\R^{d} \setminus \{0\})$ with $\supp M^{[2]} \subset B(0,2)$ and
\[
C^{[2]}:= \sup_{|\alpha| \leq d+2}\sup_{\xi \neq 0}|\xi|^{|\alpha|-\delta_{0}}|D^{\alpha}M^{[2]}(\xi)| \lesssim_{\zeta,\delta_{0},\delta_{\infty}} C;
\]
\item $M^{[3]} \in C^{d+2}(\R^{d})$ with $M^{[3]}(\xi)=0$ for $|\xi| \leq 1$ and
\[
C^{[3]}:= \sup_{|\alpha| \leq d+2}\sup_{\xi \neq 0}|\xi|^{|\alpha|+\delta_{\infty}}|D^{\alpha}M^{[2]}(\xi)| \lesssim_{\zeta,\delta_{0},\delta_{\infty}} C.
\]
\end{itemize}
For notational convenience, for each $j \geq N+1$ we write $\epsilon_{j}=0$.

\emph{The case $i=2$:} Let $|\alpha| \leq d+2$. For $\xi \in \bar{B}(0,2)$ we have
\begin{eqnarray*}
|\xi|^{|\alpha|}|D^{\alpha}M^{[2]}_{\epsilon}(\xi)|
&\leq& \sum_{j=0}^{\infty}|\xi|^{|\alpha|}|D^{\alpha}M^{[2]}_{j}(\xi)|
 = \sum_{j=0}^{\infty}|2^{-j}\xi|^{|\alpha|}|D^{\alpha}M^{[2]}(2^{-j}\xi)| \\
&\leq& C^{[2]}\sum_{j=0}^{\infty}|2^{-j}\xi|^{\delta_{0}} = C^{[2]}\left( \sum_{j=0}^{\infty}2^{-j\delta_{0}}\right)|\xi|^{\delta_{0}} \\
&\leq& C^{[2]}\frac{2^{\delta_{0}}}{1-2^{-\delta_{0}}}
\end{eqnarray*}
and for $\xi \in B(0,2^{l+1}) \setminus \bar{B}(0,2^{l})$, $l \in \N$, we similarly have, now using the support condition $\supp M^{[2]} \subset B(0,2)$,
\begin{eqnarray*}
|\xi|^{|\alpha|}|D^{\alpha}M^{[2]}_{\epsilon}(\xi)|
&\leq& \sum_{j=0}^{\infty}|\xi|^{|\alpha|}|D^{\alpha}M^{[2]}_{j}(\xi)|
 = \sum_{j=0}^{\infty}|2^{-j}\xi|^{|\alpha|}|D^{\alpha}M^{[2]}(2^{-j}\xi)| \\
&=& \sum_{j=l}^{\infty}|2^{-j}\xi|^{|\alpha|}|D^{\alpha}M^{[2]}_{j}(2^{-j}\xi)| \leq C^{[2]}\sum_{j=l}^{\infty}|2^{-j}\xi|^{\delta_{0}} \\
&=& C^{[2]}\left( \sum_{j=l}^{\infty}2^{-j\delta_{0}}\right)|\xi|^{\delta_{0}} \leq C^{[2]}\frac{2^{\delta_{0}}}{1-2^{-\delta_{0}}}.
\end{eqnarray*}
Hence, $\norm{M^{[2]}_{\epsilon}}_{\mathscr{M}_{d+2}} \leq C^{[2]}2^{\delta_{0}}(1-2^{-\delta_{0}})^{-1}$.

\emph{The case $i=3$:} Fix $l \in \N$. Since $M^{[3]} \equiv 0$ on $B(0,1)$, we have
\[
M^{[3]}_{\epsilon}(\xi) = \sum_{j=0}^{l}\epsilon_{j}M^{[3]}_{j}(\xi), \quad\quad \xi \in B(0,2^{l}) \setminus \bar{B}(0,2^{l-1}).
\]
For all $|\alpha| \leq d+2$ and $\xi \in B(0,2^{l}) \setminus \bar{B}(0,2^{l-1})$ we thus find
\begin{eqnarray*}
|\xi|^{|\alpha|}|D^{\alpha}M^{[3]}_{\epsilon}(\xi)|
&=& |\xi|^{|\alpha|}\Big|\sum_{j=0}^{l}\epsilon_{j}D^{\alpha}M^{[3]}_{j}(\xi)\Big|
   \leq \sum_{j=0}^{l}|\xi|^{|\alpha|} |D^{\alpha}M^{[3]}(\xi)| \\
&=& \sum_{j=0}^{l}|2^{-j}\xi|^{|\alpha|} |D^{\alpha}M^{[3]}(2^{-j}\xi)| \leq C^{[3]}\sum_{j=0}^{l}|2^{-j}\xi|^{-\delta_{\infty}}  \\
&\leq& C^{[3]}\sum_{j=0}^{l}(2^{-j+l-1})^{-\delta_{\infty}} = C^{[3]}2^{\delta_{\infty}}\sum_{j=0}^{l}2^{-\delta_{\infty}(l-j)} \\
&=& C^{[3]}2^{\delta_{\infty}}\sum_{k=0}^{l}2^{-\delta_{\infty} k} \leq  C^{[3]}\frac{2^{\delta_{\infty}}}{1-2^{-\delta_{\infty}}}.
\end{eqnarray*}
As $l \in \N$ was arbitrary and $M^{[3]}_{\epsilon} \equiv 0$ on $B(0,1)$, this shows that $\norm{M^{[3]}_{\epsilon}}_{\mathscr{M}_{d+2}} \leq  C^{[3]}2^{\delta_{\infty}}(1-2^{-\delta_{\infty}})^{-1}$.
\end{proof}

Note that Theorem~\ref{thm:R-bdd_dilations_FM;general_UMD} does not cover the symbol
$M(\xi) = \prod_{j=1}^{d}\mathrm{sinc}(\xi_{j})$, where $\mathrm{sinc}$ is the function given by $\mathrm{sinc}(t)=\frac{\sin(t)}{t}$ for $t \neq 0$ and $\mathrm{sinc}(0)=1$; see the end of Section~\ref{subsec:sec:difference_norms;main_result} for the relevance of this symbol, which is the Fourier transform of $2^{-d}1_{[-1,1]^{d}}$.
However, as already mentioned in Remark~\ref{rmk:thm:R-bdd_dilations_FM;general_UMD}, in the unweighted one-dimensional case we can use the Mihlin-H\"older multiplier theorem \cite[Theorem~3.1]{Hytonen_Mihlin-Hormander} in order to relax the conditions from Theorem~\ref{thm:R-bdd_dilations_FM;general_UMD}. This will lead to a criterium (Corollary~\ref{cor:thm:R-bdd_dilations_via_Mihlin-Holder}) which covers the symbol $M=\mathrm{sinc}$; see Example~\ref{ex:thm:difference_norms_separate_ineq_'sharp';R-bdd_indicator_cube}.

For each $k \in \Z$ and $j \in \{-1,1\}$ we define $I_{k,j} := j\,[2^{k-2},2^{k+2}]$.
For $\gamma \in (0,1)$ and $M \in C_{b}(\R \setminus \{0\})$ we put
\[
[M]_{\gamma}:= \sup_{k \in \Z, j = \pm 1}2^{k\gamma}[M|_{I_{k,j}}]_{C^{\gamma}(I_{k,j})}
\quad \mbox{and} \quad \normm{M}_{\gamma} := \norm{M}_{\infty} + [M]_{\gamma}.
\]
Since
\[
|M(\xi)-M(\xi-h)| \leq 4[M]_{\gamma}\,|h|^{\gamma}|\xi|^{-\gamma}, \quad |\xi| > 2|h|,
\]
the following lemma is a direct corollary of the vector-valued Mihlin-H\"older multiplier theorem \cite[Theorem~3.1]{Hytonen_Mihlin-Hormander}:
\begin{lemma}\label{lemma:mihlin-hormander}
Let $X$ be a UMD space and $p \in (1,\infty)$. Then there exists $\gamma_{X} \in (0,1)$, only depending on $X$, such that the following holds true: if $\gamma \in (\gamma_{X},1)$ and if $M \in C_{b}(\R \setminus \{0\})$ satisfies $\normm{M}_{\gamma} < \infty$, then $M$ defines a Fourier multiplier operator $T_{M}$ on $L^{p}(\R;X)$ of norm $\norm{T_{M}}_{\mathcal{B}(L^{p}(\R;X))} \lesssim_{X,p,\gamma}\normm{M}_{\gamma}$.\footnote{One can take $\gamma_{X}= \tau \vee q'$, where $\tau \in (1,2]$ and $q \in [2,\infty)$ denote the type and cotype of $X$, respectively. Here one needs the fact that $X$, as a UMD space, has non-trivial type and finite cotype; see \cite{boek}.}
\end{lemma}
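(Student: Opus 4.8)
The plan is to deduce the lemma directly from the vector-valued Mihlin--H\"older multiplier theorem \cite[Theorem~3.1]{Hytonen_Mihlin-Hormander}, as announced in the sentence preceding the statement. That theorem supplies, for any Banach space $X$ with nontrivial Fourier type, a threshold exponent $\gamma_X \in (0,1)$ depending only on $X$ such that every symbol $M \in C_b(\R \setminus \{0\})$ with $\norm{M}_\infty < \infty$ and finite scale-invariant H\"older regularity on dyadic annuli of some exponent $\gamma \in (\gamma_X,1)$ is an $L^p(\R;X)$-Fourier multiplier, with operator norm controlled --- up to a constant depending on $X$, $p$ and $\gamma$ --- by the sum of $\norm{M}_\infty$ and that H\"older quantity. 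So two things have to be checked: that such a $\gamma_X$ exists for our $X$, and that $\normm{M}_\gamma$ dominates the quantity occurring in Hyt\"onen's hypothesis.

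For the first point: since $X$ is a UMD space it is $B$-convex, hence has nontrivial type (see \cite{boek}), and therefore nontrivial Fourier type; this is exactly what is needed for \cite[Theorem~3.1]{Hytonen_Mihlin-Hormander} to apply with some $\gamma_X \in (0,1)$ depending only on $X$. As recorded in the footnote, a concrete admissible value can be read off from the type and cotype of $X$, but for the statement only the existence of $\gamma_X$ is used.

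For the second point: from $\normm{M}_\gamma < \infty$ one reads off $\norm{M}_\infty < \infty$ and, for every $k \in \Z$ and $j \in \{-1,1\}$, the local bound $[M|_{I_{k,j}}]_{C^\gamma(I_{k,j})} \le 2^{-k\gamma}[M]_\gamma$. Because each annulus $I_{k,j} = j[2^{k-2},2^{k+2}]$ has ratio $16$ and consecutive ones overlap, any two points $\xi$ and $\xi - h$ with $|\xi| > 2|h|$ (which forces them to have the same sign with $|\xi - h| \in (\tfrac12|\xi|, \tfrac32|\xi|)$) lie in a common $I_{k,j}$; feeding the corresponding local H\"older estimate and $|\xi| \gtrsim 2^k$ into that yields the pointwise bound displayed just before the lemma, $|M(\xi) - M(\xi-h)| \le 4[M]_\gamma |h|^\gamma |\xi|^{-\gamma}$ for $|\xi| > 2|h|$. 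Together with $\norm{M}_\infty < \infty$ this is precisely the scale-invariant Mihlin--H\"older condition required, and tracking the constants shows the induced quantity is $\lesssim \normm{M}_\gamma$. Applying \cite[Theorem~3.1]{Hytonen_Mihlin-Hormander} then gives $T_M \in \mathcal{B}(L^p(\R;X))$ with $\norm{T_M}_{\mathcal{B}(L^p(\R;X))} \lesssim_{X,p,\gamma} \normm{M}_\gamma$.

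The only genuine obstacle is bookkeeping: matching the exact normalization of our annuli $I_{k,j}$ and of the seminorm $[\,\cdot\,]_\gamma$ with the dyadic normalization (and precise H\"older formulation) used in \cite[Theorem~3.1]{Hytonen_Mihlin-Hormander}, and verifying that the bounded overlap of the dilated annuli costs only a harmless multiplicative constant. No idea beyond the already-displayed pointwise difference estimate is needed, which is why the lemma can be called a direct corollary.
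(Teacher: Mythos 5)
Your proposal is correct and follows essentially the same route as the paper: the paper itself offers no proof beyond the displayed pointwise bound $|M(\xi)-M(\xi-h)| \leq 4[M]_{\gamma}|h|^{\gamma}|\xi|^{-\gamma}$ for $|\xi|>2|h|$ and the remark that the lemma is a direct corollary of the cited Mihlin--H\"older theorem, and you have simply verified that bound (two points at comparable distance from the origin with the same sign lie in a common dyadic annulus $I_{k,j}$) and confirmed the theorem applies. The only small imprecision is that you attribute the applicability of Hyt\"onen's theorem to nontrivial Fourier type, whereas the paper's footnote invokes the type and cotype of $X$; since UMD implies both nontrivial (Fourier) type and finite cotype, this is only a bookkeeping difference and does not affect the argument.
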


Using this lemma, we find the following variant of Theorem~\ref{thm:R-bdd_dilations_FM;general_UMD}:

\begin{thm}\label{thm:R-bdd_dilations_via_Mihlin-Holder}
Let $X$ be a UMD space $p \in (1,\infty)$. Let $\gamma \in (\gamma_{X},1)$, where $\gamma_{X} \in (0,1)$ is from Lemma~\ref{lemma:mihlin-hormander}.
Let $M \in C_{b}(\R)$ and set $M_{n}:=M(2^{-n}\,\cdot\,)$ for each $n \in \Z$.
Suppose that there exist $\delta_{0},\delta_{\infty} > 0$ such that
\[
C_{0} := \sup_{0<|\xi|\leq 1}|\xi|^{-\delta_{0}}|M(\xi)-M(0)| \,\vee\, \sup_{k \leq -1,j=\pm1}2^{k(\gamma-\delta_{0})}[M|_{I_{k,j}}]_{C^{\gamma}(I_{k,j})} < \infty
\]
and
\[
C_{\infty} := \sup_{|\xi| \geq 1}|\xi|^{\delta_{\infty}}|M(\xi)| \,\vee\, \sup_{k \geq 0,j=\pm1}2^{k(\gamma+\delta_{\infty})}[M|_{I_{k,j}}]_{C^{\gamma}(I_{k,j})} < \infty.
\]
Then $\{M_{n}\}_{n \in \Z}$ defines an $\mathcal{R}$-bounded sequence of Fourier multiplier operators $\{T_{M_{n}}\}_{n \in \Z}$ in $\mathcal{B}(L^{p}(\R;X))$ with $\mathcal{R}$-bound $\lesssim_{X,p,\tau,q,\gamma,\delta_{0},\delta_{\infty}}[\norm{M}_{\infty} \vee C_{0} \vee C_{\infty}]$.
\end{thm}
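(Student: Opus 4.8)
The plan is to follow the scheme of the proof of Theorem~\ref{thm:R-bdd_dilations_FM;general_UMD}, replacing the Mihlin embedding $\mathscr{M}_{d+2} \hookrightarrow \mathcal{M}_{p,w}(X)$ by the Mihlin--H\"older embedding of Lemma~\ref{lemma:mihlin-hormander}. As in the proof of Corollary~\ref{cor:prop:R-bdd_partial_sums;dilations_constant_on_nbhd_origin}, dilation invariance of the $L^{p}(\R)$-norm reduces matters to the $\mathcal{R}$-boundedness of $\{M_{n}\}_{n \in \N}$. Fix $\zeta \in C^{\infty}_{c}(\R)$ with $\zeta \equiv 1$ on $[-1,1]$ and $\supp\zeta \subset (-\tfrac{3}{2},\tfrac{3}{2})$, and split
\[
M \;=\; \underbrace{M(0)\zeta}_{=:\,M^{[1]}} \;+\; \underbrace{\zeta\,(M-M(0)\zeta)}_{=:\,M^{[2]}} \;+\; \underbrace{(1-\zeta)\,(M-M(0)\zeta)}_{=:\,M^{[3]}},
\]
writing $M^{[i]}_{n} := M^{[i]}(2^{-n}\,\cdot\,)$. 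Since $M^{[1]} = M(0)\zeta \in C^{\infty}_{c}(\R)$ is constant near $0$, Corollary~\ref{cor:prop:R-bdd_partial_sums;dilations_constant_on_nbhd_origin} already yields that $\{M^{[1]}_{n}\}_{n \in \N}$ induces an $\mathcal{R}$-bounded sequence of Fourier multipliers on $L^{p}(\R;X)$ with bound $\lesssim_{\zeta}|M(0)| \leq \norm{M}_{\infty}$. For $i \in \{2,3\}$ we invoke Corollary~\ref{prop:R-bdd_via_Mikhlin} instead: it suffices to bound $\normm{M^{[i]}_{\epsilon}}_{\gamma}$ uniformly over $N \in \N$ and over $\epsilon = (\epsilon_{n})_{n=0}^{N} \in \{-1,1\}^{N+1}$, where $M^{[i]}_{\epsilon} := \sum_{n=0}^{N}\epsilon_{n}M^{[i]}_{n}$, since Lemma~\ref{lemma:mihlin-hormander} (applicable because $\gamma \in (\gamma_{X},1)$) turns such a bound into the requisite uniform control of $\norm{M^{[i]}_{\epsilon}}_{\mathcal{M}_{p,1}(X)}$.

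Both remaining estimates rest on the two elementary identities $\norm{f(2^{-n}\,\cdot\,)}_{\infty} = \norm{f}_{\infty}$ and $[f(2^{-n}\,\cdot\,)|_{I_{k,j}}]_{C^{\gamma}(I_{k,j})} = 2^{-n\gamma}[f|_{I_{k-n,j}}]_{C^{\gamma}(I_{k-n,j})}$, the latter because $2^{-n}I_{k,j} = I_{k-n,j}$. For $M^{[2]}$: this symbol lies in $C_{b}(\R)$, is supported in $[-\tfrac{3}{2},\tfrac{3}{2}]$, coincides with $M - M(0)$ on $[-1,1]$, and therefore inherits from hypothesis $C_{0}$ (the boundedly many transition scales $|\xi| \sim 1$ being absorbed into $\norm{M}_{\infty} \vee C_{0}$ and $\norm{\zeta}_{C^{\gamma}}$) the homogeneity bounds $|M^{[2]}(\xi)| \lesssim C^{[2]}|\xi|^{\delta_{0}}$ and $2^{k(\gamma-\delta_{0})}[M^{[2]}|_{I_{k,j}}]_{C^{\gamma}} \leq C^{[2]}$ for a constant $C^{[2]} \lesssim_{\zeta,\gamma,\delta_{0}} \norm{M}_{\infty} \vee C_{0}$. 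Because $M^{[2]}$ is supported near $0$, for each fixed $\xi$ the support condition truncates $\sum_{n \geq 0}|M^{[2]}_{n}(\xi)|$ to a geometric tail of ratio $2^{-\delta_{0}}$ with sum $\lesssim_{\delta_{0}} C^{[2]}$; and, applying the second identity, multiplying by $2^{k\gamma}$, reindexing $m = k-n$ and again using the support of $M^{[2]}$ to cut the sum off from above at an absolute scale, $2^{k\gamma}[M^{[2]}_{\epsilon}|_{I_{k,j}}]_{C^{\gamma}}$ is dominated by $\sum_{m}2^{m\delta_{0}}\,2^{m(\gamma-\delta_{0})}[M^{[2]}|_{I_{m,j}}]_{C^{\gamma}} \leq C^{[2]}\sum_{m}2^{m\delta_{0}} \lesssim_{\delta_{0}} C^{[2]}$, a geometric series in $2^{\delta_{0}}$ that converges since $\delta_{0}>0$. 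Hence $\normm{M^{[2]}_{\epsilon}}_{\gamma} \lesssim_{\zeta,\gamma,\delta_{0}} \norm{M}_{\infty} \vee C_{0}$, uniformly in $N$ and $\epsilon$. The symbol $M^{[3]} \in C_{b}(\R)$ is treated symmetrically at infinity: it vanishes on $[-1,1]$, equals $M$ for $|\xi| \geq \tfrac{3}{2}$, and inherits from hypothesis $C_{\infty}$ the bounds $|M^{[3]}(\xi)| \lesssim C^{[3]}|\xi|^{-\delta_{\infty}}$ and $2^{k(\gamma+\delta_{\infty})}[M^{[3]}|_{I_{k,j}}]_{C^{\gamma}} \leq C^{[3]}$ with $C^{[3]} \lesssim_{\zeta,\gamma,\delta_{\infty}} \norm{M}_{\infty} \vee C_{\infty}$; since $M^{[3]}$ vanishes near $0$, only dilates with $|2^{-n}\xi| \gtrsim 1$ contribute, and the same reindexing and geometric summation — now in $2^{-\delta_{\infty}}$ — give $\normm{M^{[3]}_{\epsilon}}_{\gamma} \lesssim_{\zeta,\gamma,\delta_{\infty}} \norm{M}_{\infty} \vee C_{\infty}$. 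Combining the three contributions via Corollaries~\ref{cor:prop:R-bdd_partial_sums;dilations_constant_on_nbhd_origin} and \ref{prop:R-bdd_via_Mikhlin} (the latter with Lemma~\ref{lemma:mihlin-hormander}) produces the asserted $\mathcal{R}$-bound.

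The main obstacle is the bookkeeping just sketched: one must verify carefully that the auxiliary symbols $M^{[2]}$ and $M^{[3]}$ genuinely inherit, uniformly in the dyadic scale, the prescribed homogeneity-type bounds near $0$ and near $\infty$ — in particular on the boundedly many intermediate dyadic intervals $I_{k,j}$ with $|\xi| \sim 1$, where the cut-off $\zeta$ blends the two regimes and one falls back on plain boundedness of $M$, the endpoint cases of $C_{0}$ and $C_{\infty}$, and the $C^{\gamma}$-control of $\zeta$ — and that the dilation scaling of the H\"older seminorm is applied in the correct order, so that after reindexing $m = k-n$ the sums over dyadic scales collapse to convergent geometric series. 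This is precisely the one-dimensional, H\"older-seminorm analogue of the computation carried out in detail in the proof of Theorem~\ref{thm:R-bdd_dilations_FM;general_UMD}, with $\normm{\,\cdot\,}_{\gamma}$ in place of $\norm{\,\cdot\,}_{\mathscr{M}_{d+2}}$, so no genuinely new difficulty is expected.
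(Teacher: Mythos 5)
Your proposal is correct and follows exactly the route the paper indicates: the paper's proof of this theorem consists of the single sentence that one argues as in Theorem~\ref{thm:R-bdd_dilations_FM;general_UMD}, replacing the Mihlin embedding $\mathscr{M}_{d+2}(\R^d)\hookrightarrow\mathcal{M}_{p,w}(X)$ by Lemma~\ref{lemma:mihlin-hormander} to treat $M^{[2]}$ and $M^{[3]}$, and your write-up is precisely a careful execution of that instruction (same $\zeta$-decomposition, same use of Corollary~\ref{cor:prop:R-bdd_partial_sums;dilations_constant_on_nbhd_origin} for $M^{[1]}$, same reduction to $\N$ by dilation invariance, same uniform bound on $\normm{M^{[i]}_\epsilon}_\gamma$ via the dilation identity for $[\,\cdot\,|_{I_{k,j}}]_{C^\gamma}$ and geometric summation). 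The detailed bookkeeping is sound, so this is the paper's proof with the omitted steps filled in rather than an alternative argument.
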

\begin{proof}
This can be shown in a similar fashion as Theorem~\ref{thm:R-bdd_dilations_FM;general_UMD}, now using the (Mihlin-H\"older multiplier theorem in the form of) Lemma~\ref{lemma:mihlin-hormander} to treat the cases $i=2,3$.
\end{proof}

\begin{cor}\label{cor:thm:R-bdd_dilations_via_Mihlin-Holder}
Let $X$ be a UMD space $p \in (1,\infty)$. Let $\gamma \in (\gamma_{X},1)$, where $\gamma_{X} \in (0,1)$ is from Lemma~\ref{lemma:mihlin-hormander}. Let $M \in C_{b}(\R) \cap C^{1}(\R \setminus \{0\})$ and set $M_{n}:=M(2^{-n}\,\cdot\,)$ for each $n \in \Z$.
Suppose that there exist $\delta_{0},\delta_{\infty} > 0$ and $\theta \in [0,1]$ such that
\begin{equation}\label{eq:cor:thm:R-bdd_dilations_via_Mihlin-Holder;cond_origin}
C_{0} := \sup_{0<|\xi|\leq 1}|\xi|^{-\delta_{0}}|M(\xi)-M(0)| \,\vee\, \sup_{|\xi| \leq 1}|\xi|^{1-\delta_{0}}|M'(\xi)| < \infty
\end{equation}
and
\begin{equation}\label{eq:cor:thm:R-bdd_dilations_via_Mihlin-Holder;cond_infinity}
C_{\infty} := \sup_{|\xi| \geq 1}|\xi|^{\max\big\{\delta_{\infty},(\gamma+\delta_{\infty})\frac{1-\theta}{1-\gamma}\big\}}|M(\xi)|
\,\vee\, \sup_{|\xi| \geq 1}|\xi|^{(\gamma+\delta_{\infty})\frac{\theta}{\gamma}}|M'(\xi)|   < \infty.
\end{equation}
Then $\{M_{n}\}_{n \in \Z}$ defines an $\mathcal{R}$-bounded sequence of Fourier multiplier operators $\{T_{M_{n}}\}_{n \in \Z}$ in $\mathcal{B}(L^{p}(\R;X))$ with $\mathcal{R}$-bound $\lesssim_{X,p,\tau,q,\gamma,\delta_{0},\delta_{\infty}}[\norm{M}_{\infty} \vee C_{0} \vee C_{\infty}]$.
\end{cor}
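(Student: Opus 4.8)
The plan is to derive Corollary~\ref{cor:thm:R-bdd_dilations_via_Mihlin-Holder} from Theorem~\ref{thm:R-bdd_dilations_via_Mihlin-Holder}, applied with the same $X$, $p$, $\gamma$, $\delta_{0}$ and $\delta_{\infty}$, by verifying that the hypotheses \eqref{eq:cor:thm:R-bdd_dilations_via_Mihlin-Holder;cond_origin} and \eqref{eq:cor:thm:R-bdd_dilations_via_Mihlin-Holder;cond_infinity} force the constants $C_{0}$ and $C_{\infty}$ appearing in that theorem to be finite and $\lesssim \norm{M}_{\infty} \vee C_{0} \vee C_{\infty}$ (the constants of the corollary). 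Since $M \in C_{b}(\R)$ by assumption, the only real work is to pass from the pointwise bounds on $M$ and $M'$ to bounds on the weighted H\"older seminorms $2^{k\gamma}[M|_{I_{k,j}}]_{C^{\gamma}(I_{k,j})}$. For this I would use the elementary interpolation inequality
\[
[f]_{C^{\gamma}(I)} \leq \big(2\norm{f}_{L^{\infty}(I)}\big)^{1-\gamma}\norm{f'}_{L^{\infty}(I)}^{\gamma},
\]
valid on any interval $I$ with a constant independent of $I$; it follows at once from $|f(x)-f(y)| \leq \min\{2\norm{f}_{L^{\infty}(I)},\,\norm{f'}_{L^{\infty}(I)}|x-y|\}$ by taking the geometric mean of the two bounds with weights $1-\gamma$ and $\gamma$.

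For the annuli $I_{k,j}$ with $k \leq -2$ one has $I_{k,j} \subset [-1,1]\setminus\{0\}$ and $|\xi| \eqsim 2^{k}$ on $I_{k,j}$, so \eqref{eq:cor:thm:R-bdd_dilations_via_Mihlin-Holder;cond_origin} gives $\norm{M-M(0)}_{L^{\infty}(I_{k,j})} \lesssim C_{0}2^{k\delta_{0}}$ and $\norm{M'}_{L^{\infty}(I_{k,j})} \lesssim C_{0}2^{k(\delta_{0}-1)}$. Applying the interpolation inequality to $f=M-M(0)$ (whose H\"older seminorm on $I_{k,j}$ equals that of $M$) yields $[M|_{I_{k,j}}]_{C^{\gamma}(I_{k,j})} \lesssim C_{0}2^{k\delta_{0}(1-\gamma)+k(\delta_{0}-1)\gamma} = C_{0}2^{k(\delta_{0}-\gamma)}$, that is, $2^{k(\gamma-\delta_{0})}[M|_{I_{k,j}}]_{C^{\gamma}(I_{k,j})} \lesssim C_{0}$. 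For the annuli $I_{k,j}$ with $k \geq 2$ one has $I_{k,j} \subset \{|\xi| \geq 1\}$, and \eqref{eq:cor:thm:R-bdd_dilations_via_Mihlin-Holder;cond_infinity} gives, using $\max\{\delta_{\infty},(\gamma+\delta_{\infty})\tfrac{1-\theta}{1-\gamma}\} \geq (\gamma+\delta_{\infty})\tfrac{1-\theta}{1-\gamma}$, the bounds $\norm{M}_{L^{\infty}(I_{k,j})} \lesssim C_{\infty}2^{-k(\gamma+\delta_{\infty})\frac{1-\theta}{1-\gamma}}$ and $\norm{M'}_{L^{\infty}(I_{k,j})} \lesssim C_{\infty}2^{-k(\gamma+\delta_{\infty})\frac{\theta}{\gamma}}$. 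The exponents in \eqref{eq:cor:thm:R-bdd_dilations_via_Mihlin-Holder;cond_infinity} are rigged precisely so that $\tfrac{1-\theta}{1-\gamma}(1-\gamma) + \tfrac{\theta}{\gamma}\gamma = 1$, whence the interpolation inequality gives $[M|_{I_{k,j}}]_{C^{\gamma}(I_{k,j})} \lesssim C_{\infty}2^{-k(\gamma+\delta_{\infty})}$, i.e. $2^{k(\gamma+\delta_{\infty})}[M|_{I_{k,j}}]_{C^{\gamma}(I_{k,j})} \lesssim C_{\infty}$.

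It then remains to handle the finitely many transitional annuli $I_{k,j}$ with $k \in \{-1,0,1\}$, which straddle $\{|\xi|=1\}$: on each such fixed compact subset of $\R\setminus\{0\}$ one splits at $|\xi|=1$ and uses \eqref{eq:cor:thm:R-bdd_dilations_via_Mihlin-Holder;cond_origin} on the part with $|\xi| \leq 1$ and \eqref{eq:cor:thm:R-bdd_dilations_via_Mihlin-Holder;cond_infinity} on the part with $|\xi| \geq 1$ to see that $M$ and $M'$ are bounded there by $\lesssim \norm{M}_{\infty} \vee C_{0} \vee C_{\infty}$; since $2^{k\gamma} \eqsim 1$ for these $k$, the corresponding weighted H\"older seminorms are dominated as well. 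Moreover the two pointwise suprema over $\{0<|\xi|\leq1\}$ and $\{|\xi|\geq1\}$ that enter the constants $C_{0}$ and $C_{\infty}$ of Theorem~\ref{thm:R-bdd_dilations_via_Mihlin-Holder} are, respectively, exactly one of the defining terms of the corollary's $C_{0}$ and dominated by the first term of the corollary's $C_{\infty}$ (again because $\max\{\delta_{\infty},(\gamma+\delta_{\infty})\tfrac{1-\theta}{1-\gamma}\} \geq \delta_{\infty}$). Combining the three cases shows that the constants of Theorem~\ref{thm:R-bdd_dilations_via_Mihlin-Holder} satisfy $C_{0}, C_{\infty} \lesssim \norm{M}_{\infty} \vee C_{0} \vee C_{\infty}$, and that theorem then delivers the asserted $\mathcal{R}$-bound. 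I expect the only step requiring genuine care to be the bookkeeping of the dyadic exponents near infinity, in particular checking that the interpolation parameter $\theta$ may indeed range over all of $[0,1]$ (and degenerates harmlessly at the endpoints $\theta=0,1$); everything else is routine.
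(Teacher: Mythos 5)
Your proposal is correct and follows essentially the same route as the paper: both reduce to Theorem~\ref{thm:R-bdd_dilations_via_Mihlin-Holder} by bounding the weighted H\"older seminorms $2^{k(\gamma-\delta_0)}[M|_{I_{k,j}}]_{C^{\gamma}(I_{k,j})}$ and $2^{k(\gamma+\delta_\infty)}[M|_{I_{k,j}}]_{C^{\gamma}(I_{k,j})}$ via the pointwise data on $M$ and $M'$, using the interpolation inequality $[f]_{C^{\gamma}(I)} \lesssim \norm{f}_{L^{\infty}(I)}^{1-\gamma}\norm{f'}_{L^{\infty}(I)}^{\gamma}$. The only cosmetic differences are that the paper uses the mean value theorem instead of the geometric-mean interpolation for the case near the origin and inserts an additional Young's inequality step near infinity, whereas you substitute the decay bounds directly; your explicit treatment of the transitional annuli $k\in\{-1,0,1\}$, which the paper dispatches with ``the result now easily follows,'' is a welcome bit of extra care.
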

\begin{proof}
For every $k \in \Z$ and $j \in \{-1,1\}$ we have
\[
2^{k(\gamma-\delta_{0})}[M|_{I_{k,j}}]_{C^{\gamma}(I_{k,j})} \lesssim_{\gamma} 2^{k(\gamma-\delta_{0})}2^{k(1-\gamma)}\norm{M'|_{I_{k,j}}}_{\infty}
\eqsim \sup_{\xi \in I_{k,j}}|\xi|^{1-\delta_{0}}|M'(\xi)|
\]
and
\begin{eqnarray*}
2^{k(\gamma+\delta_{\infty})}[M|_{I_{k,j}}]_{C^{\gamma}(I_{k,j})}
&\leq& 2^{k(\gamma+\delta_{\infty})}\,2^{1-\gamma}\norm{M|_{I_{k,j}}}_{\infty}^{1-\gamma}\norm{M'|_{I_{k,j}}}_{\infty}^{\gamma} \\
&\lesssim_{\gamma}&  2^{k(\gamma+\delta_{\infty})\frac{1-\theta}{1-\gamma}}\norm{M|_{I_{k,j}}}_{\infty} + 2^{k(\gamma+\delta_{\infty})\frac{\theta}{\gamma}}\norm{M'|_{I_{k,j}}}_{\infty} \\
&\eqsim& \sup_{\xi \in I_{k,j}}|\xi|^{(\gamma+\delta_{\infty})\frac{1-\theta}{1-\gamma}}|M(\xi)|
   + \sup_{\xi \in I_{k,j}}|\xi|^{(\gamma+\delta_{\infty})\frac{\theta}{\gamma}}|M'(\xi)|.
\end{eqnarray*}
The result now easily follows from Theorem~\ref{thm:R-bdd_dilations_via_Mihlin-Holder}.
\end{proof}

\section{Difference Norms}\label{sec:difference_norms}

\subsection{Notation}\label{subsec:sec:difference_norms;notation}

Let $X$ be a Banach space. For each $m \in \Z_{\geq 1}$ and $h \in \R^{d}$ we define difference operator $\Delta^{m}_{h}$ on $L^{0}(\R^{d};X)$ by
$\Delta^{m}_{h} := (L_{h}-I)^{m} = \sum_{j=0}^{m}(-1)^{j}{m \choose j}L_{(m-j)h}$, where $L_{h}$ denotes the left translation by $h$:
\[
\Delta^{m}_{h}f(x) = \sum_{j=0}^{m}(-1)^{j}{m \choose j}f(x+(m-j)h), \quad\quad f \in L^{0}(\R^{d};X), x \in \R^{d}.
\]

Let $p \in (1,\infty)$, $w \in A_{p}(\R^{d})$, $m \in \Z_{\geq 1}$, and $K \in \mathscr{K}(\R^{d})$.
For every $c>0$, $\tilde{K}_{c} = c^{d}K(-c\,\cdot\,) \in \mathscr{K}(\R^{d})$ gives rise to a (well-defined) bounded convolution operator $f \mapsto \tilde{K}_{c}*f$ on $L^{p}(\R^{d},w;X)$ of norm $\lesssim_{p,d,w} \norm{\tilde{K}_{c}}_{\mathscr{K}(\R^{d})} = \norm{K}_{\mathscr{K}(\R^{d})}$, which is given by the formula
\[
\tilde{K}_{c}*f\,(x) = \int_{\R^{d}}\tilde{K}_{c}(x-y)f(y)\,dy = \int_{\R^{d}}K(h)L_{c^{-1}h}f(x)\,dh, \quad\quad x \in \R^{d};
\]
see the last part of Section~\ref{subsec:prelim:weights}.
Defining $K^{\Delta^{m}} := \sum_{j=0}^{m-1}(-1)^{j}{m \choose j}\tilde{K}_{[(m-j)]^{-1}} \in \mathscr{K}(\R^{d})$,
for each $t>0$ the operator
\[
f \mapsto K_{m}(t,f) := K^{\Delta^{m}}_{t^{-1}}*f + (-1)^{m}\hat{K}(0)f = \sum_{j=0}^{m-1}(-1)^{j}{m \choose j}\tilde{K}_{[(m-j)t]^{-1}}*f + (-1)^{m}\hat{K}(0)f
\]
is bounded on $L^{p}(\R^{d},w;X)$ of norm $\lesssim_{p,d,w,m} \norm{K}_{\mathscr{K}(\R^{d})}$, and the following identity holds
\[
K_{m}(t,f)(x) = \int_{\R^{d}}K(h)\Delta^{m}_{th}f(x)\,dh, \quad\quad x \in \R^{d}.
\]
Given $f \in L^{p}(\R^{d},w;X)$,
the functions $K_{m}(t,f)$ may be interpreted as weighted means of differences of $f$.

For $f \in L^{p}(\R^{d},w;X)$ we set
\begin{equation*}
[f]_{H^{s}_{p}(\R^{d},w;X)}^{(m,K)} := \sup_{J \in \N}\normB{\sum_{j=1}^{J}\varepsilon_{j}2^{js}K_{m}(2^{-j},f)}_{L^{p}(\Omega;L^{p}(\R^{d},w;X))}.
\end{equation*}
and
\begin{equation*}
\normm{f}_{H^{s}_{p}(\R^{d},w;X)}^{(m,K)} := \norm{f}_{L^{p}(\R^{d},w;X)} + [f]_{H^{s}_{p}(\R^{d},w;X)}^{(m,K)}.
\end{equation*}

\subsection{Statement of the Main Result}\label{subsec:sec:difference_norms;main_result}

The following theorem is the main result of this paper. As already announced in the introduction, it is (indeed) a more general version of Theorem~\ref{thm:intro_main_result;difference} thanks to the $\mathcal{R}$-boundedness results Theorem~\ref{thm:R-bdd_dilations_FM;general_UMD} and Corollary~\ref{cor:thm:R-bdd_dilations_via_Mihlin-Holder}; see Examples \ref{ex:thm:difference_norms_separate_ineq_'sharp';examples_cond_rho} and \ref{ex:thm:difference_norms_separate_ineq_'sharp';R-bdd_indicator_cube}.

\begin{thm}\label{thm:difference_norms_separate_ineq_'sharp'}
Let $X$ be a UMD Banach space, $s > 0$, $p \in (1,\infty)$, $w \in A_{p}(\R^{d})$, $m \in \Z_{\geq 1}$ and $K \in \mathscr{K}(\R^{d})$.
\begin{itemize}
\item[(i)] Suppose that $K \in L^{1}(\R^{d},(1+|\,\cdot\,|)^{d+2})$ and that $K^{\Delta^{m}}$ fulfills the Tauberian condition
\begin{equation}\label{eq:thm:difference_norms_separate_ineq_'sharp';(i)_Tauberisch}
|\mathscr{F}K^{\Delta^{m}}(\xi)| \geq c, \quad\quad \xi \in \R^{d}, \frac{\epsilon}{2} < |\xi| < 2\epsilon,
\end{equation}
for some $\epsilon,c > 0$. Then we have the estimate
\begin{equation}\label{eq:thm:difference_norms_separate_ineq_'sharp';(i)_estimate}
\norm{f}_{H^{s}_{p}(\R^{d},w;X)} \lesssim \normm{f}_{H^{s}_{p}(\R^{d},w;X)}^{(m,K)},
\quad\quad f \in L^{p}(\R^{d},w;X).
\end{equation}
\item[(ii)] Suppose that $m > s$, $K \in L^{1}(\R^{d},(1+|\,\cdot\,|)^{(d+3)m})$, and that $\{ f \mapsto K_{m}(2^{-j},f) : j \in \Z_{\geq 1} \} \subset \mathcal{B}(L^{p}(\R^{d},w;X))$ is $\mathcal{R}$-bounded. Then we have the estimate
\begin{equation}\label{eq:thm:difference_norms_separate_ineq_'sharp';(ii)_estimate}
\normm{f}_{H^{s}_{p}(\R^{d},w;X)}^{(m,K)} \lesssim \norm{f}_{H^{s}_{p}(\R^{d},w;X)}, \quad\quad f \in L^{p}(\R^{d},w;X).
\end{equation}
\end{itemize}
\end{thm}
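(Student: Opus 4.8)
The statement to be proved consists of the two estimates~\eqref{eq:thm:difference_norms_separate_ineq_'sharp';(i)_estimate} and~\eqref{eq:thm:difference_norms_separate_ineq_'sharp';(ii)_estimate}. The plan is to derive both from the randomized Littlewood--Paley decomposition~\eqref{eq:randomized-LP-decomp} and the $\mathcal{R}$-boundedness criterion of Corollary~\ref{prop;R-bddness_fm;nbhd_cond_non-zero_restr} (via the dyadic--corona Example~\ref{ex:prop;R-bddness_fm;nbhd_cond_non-zero_restr;ex_cond(a);dyadic_support}). Throughout I will write $\psi(\xi):=\int_{\R^{d}}K(h)(e^{ih\cdot\xi}-1)^{m}\,dh=\widehat{K^{\Delta^{m}}}(\xi)+(-1)^{m}\widehat{K}(0)$ for the Fourier multiplier symbol of $f\mapsto K_{m}(1,f)$, so that $f\mapsto K_{m}(2^{-j},f)$ has symbol $\psi(2^{-j}\,\cdot\,)$. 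Since $(e^{ih\cdot\xi}-1)^{m}$ vanishes to order $m$ at $\xi=0$, the hypothesis $K\in L^{1}(\R^{d},(1+|\,\cdot\,|)^{d+2})$ in (i) gives $\psi\in C^{d+2}(\R^{d})$ with $\|\psi\|_{C^{d+2}}<\infty$, and the stronger moment hypothesis in (ii) additionally yields $|D^{\alpha}\psi(\xi)|\lesssim|\xi|^{m-|\alpha|}$ for $|\xi|\le1$, $|\alpha|\le d+2$. Below, $\|\,\cdot\,\|$ abbreviates the norm of $L^{p}(\Omega;L^{p}(\R^{d},w;X))$ (or of $L^{p}(\R^{d},w;X)$, depending on context).

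\emph{Part (i).} First I would locate a dyadic annulus on which $\psi$ is bounded below: if $\widehat{K}(0)=0$ then $\psi=\widehat{K^{\Delta^{m}}}$ and the Tauberian condition~\eqref{eq:thm:difference_norms_separate_ineq_'sharp';(i)_Tauberisch} provides such an annulus directly; if $\widehat{K}(0)\neq0$, then $K^{\Delta^{m}}\in L^{1}(\R^{d})$ and the Riemann--Lebesgue lemma give $\psi(\xi)\to(-1)^{m}\widehat{K}(0)$ as $|\xi|\to\infty$, hence $|\psi|\ge|\widehat{K}(0)|/2$ on $\{|\xi|\ge R\}$ for some $R$. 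Either way, fix $\rho,c'>0$ with $|\psi|\ge c'$ on $\{\rho\le|\xi|\le4\rho\}$, and choose $\varphi=(\varphi_{n})_{n}\in\Phi(\R^{d})$ so that $2^{-n}\supp\widehat{\varphi}_{n}$ is a fixed compact subset of $\{\rho\le|\xi|\le4\rho\}$ for every $n\ge1$. Then, for $n\ge1$, the symbols $\widehat{\theta}_{n}:=\widehat{\varphi}_{n}/\psi(2^{-n}\,\cdot\,)$ (extended by $0$ off $\supp\widehat{\varphi}_{n}$) form a bounded sequence in $\mathscr{M}_{d+2}(\R^{d})$ with dyadic--corona supports, so by Corollary~\ref{prop;R-bddness_fm;nbhd_cond_non-zero_restr} and Example~\ref{ex:prop;R-bddness_fm;nbhd_cond_non-zero_restr;ex_cond(a);dyadic_support} the family $\{T_{\widehat{\theta}_{n}}\}_{n\ge1}$ is $\mathcal{R}$-bounded on $L^{p}(\R^{d},w;X)$. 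Since $\widehat{\theta}_{n}(\xi)\psi(2^{-n}\xi)=\widehat{\varphi}_{n}(\xi)$ identically, one gets the operator identity $S_{n}f=T_{\widehat{\theta}_{n}}K_{m}(2^{-n},f)$ for $n\ge1$, whence for every $N$
\[
\Big\|\sum_{n=1}^{N}\varepsilon_{n}2^{ns}S_{n}f\Big\|
=\Big\|\sum_{n=1}^{N}\varepsilon_{n}T_{\widehat{\theta}_{n}}\!\big(2^{ns}K_{m}(2^{-n},f)\big)\Big\|
\le\mathcal{R}\big(\{T_{\widehat{\theta}_{n}}\}\big)\,[f]^{(m,K)}_{H^{s}_{p}(\R^{d},w;X)}.
\]
Adding the $n=0$ term $\|S_{0}f\|\lesssim\|f\|_{L^{p}(\R^{d},w;X)}$ and invoking~\eqref{eq:randomized-LP-decomp} then gives~\eqref{eq:thm:difference_norms_separate_ineq_'sharp';(i)_estimate}. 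Note that the $\mathcal{R}$-boundedness \emph{hypothesis} on $\{K_{m}(2^{-j},\,\cdot\,)\}$ is not used in this part.

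\emph{Part (ii).} I may assume $\|f\|_{H^{s}_{p}(\R^{d},w;X)}<\infty$, so that $f=\sum_{n\ge0}S_{n}f$ in $L^{p}(\R^{d},w;X)$ for a fixed $\varphi\in\Phi(\R^{d})$ and $\sup_{N}\|\sum_{n=0}^{N}\varepsilon_{n}2^{ns}S_{n}f\|\eqsim\|f\|_{H^{s}_{p}(\R^{d},w;X)}$; fix also a slightly fattened $\widetilde{\varphi}\in\Phi(\R^{d})$ with $S_{n}=S_{n}\widetilde{S}_{n}$. Putting $S_{n}:=0$ for $n<0$ and regrouping by the offset $k=n-j$,
\[
\sum_{j=1}^{J}\varepsilon_{j}2^{js}K_{m}(2^{-j},f)
=\sum_{k\in\Z}\ \sum_{\substack{1\le j\le J\\ j+k\ge0}}\varepsilon_{j}2^{js}\,K_{m}(2^{-j},S_{j+k}f).
\]
For $k\ge0$ I would apply the $\mathcal{R}$-boundedness hypothesis to the vectors $2^{(j+k)s}S_{j+k}f$, obtaining (after a reindexing of the Rademacher variables and a triangle inequality against~\eqref{eq:randomized-LP-decomp}) the bound $\|\sum_{j}\varepsilon_{j}2^{js}K_{m}(2^{-j},S_{j+k}f)\|\lesssim2^{-ks}\mathcal{R}(\{K_{m}(2^{-j},\,\cdot\,)\})\,\|f\|_{H^{s}_{p}(\R^{d},w;X)}$. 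For $k<0$, the symbols $\{\psi(2^{-j}\,\cdot\,)\widehat{\varphi}_{j+k}\}_{j}$ are the $2^{j}$-dilates of a single symbol with dyadic--corona support in a shell $\{|\xi|\sim2^{k}\}$ and $\mathscr{M}_{d+2}$-norm $\lesssim2^{km}$ (here the near-origin estimate $|D^{\alpha}\psi|\lesssim|\xi|^{m-|\alpha|}$ is used), so by Corollary~\ref{prop;R-bddness_fm;nbhd_cond_non-zero_restr} the operators $\{K_{m}(2^{-j},\,\cdot\,)S_{j+k}\}_{j}$ are $\mathcal{R}$-bounded with bound $\lesssim2^{km}$; inserting $S_{j+k}=S_{j+k}\widetilde{S}_{j+k}$ and applying this to $2^{(j+k)s}\widetilde{S}_{j+k}f$ gives $\|\sum_{j}\varepsilon_{j}2^{js}K_{m}(2^{-j},S_{j+k}f)\|\lesssim2^{-ks}2^{km}\|f\|_{H^{s}_{p}(\R^{d},w;X)}=2^{k(m-s)}\|f\|_{H^{s}_{p}(\R^{d},w;X)}$. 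Summing over $k\in\Z$ and using $s>0$ together with $m>s$ (so $\sum_{k\ge0}2^{-ks}+\sum_{k<0}2^{k(m-s)}<\infty$) yields $[f]^{(m,K)}_{H^{s}_{p}(\R^{d},w;X)}\lesssim\|f\|_{H^{s}_{p}(\R^{d},w;X)}$, and adding $\|f\|_{L^{p}(\R^{d},w;X)}\lesssim\|f\|_{H^{s}_{p}(\R^{d},w;X)}$ gives~\eqref{eq:thm:difference_norms_separate_ineq_'sharp';(ii)_estimate}.

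\emph{Main obstacle.} The crux of (i) is the realization that one must invert the \emph{full} symbol $\psi$ rather than $\widehat{K^{\Delta^{m}}}$: the constant part $(-1)^{m}\widehat{K}(0)$ of $K_{m}(2^{-j},\,\cdot\,)$ cannot be carried along in the reproducing formula, which is what forces the dichotomy $\widehat{K}(0)=0$ (where the Tauberian condition suffices) versus $\widehat{K}(0)\neq0$ (where one reaches an annulus at infinity via Riemann--Lebesgue). The crux of (ii) is the splitting $k\ge0$ / $k<0$: for $k\ge0$ only the $\mathcal{R}$-boundedness hypothesis — not the Mihlin calculus of Section~\ref{sec:R-bdd_FM} — can control the operators, since the dilated symbols $\psi(2^{-j}\,\cdot\,)\widehat{\varphi}_{j+k}$ need not be uniformly Mihlin for $k\ge0$; for $k<0$ the summability-producing gain $2^{k(m-s)}$ rests on the order-$m$ vanishing of $\psi$ at the origin, which is where the moment condition on $K$ enters. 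The remaining ingredients — the $C^{d+2}$- and near-origin estimates on $\psi$ under the stated moment conditions, the uniform Mihlin bounds on $\widehat{\theta}_{n}$ and on $\psi\widehat{\varphi}_{k}$, and the Rademacher reindexings — are routine.
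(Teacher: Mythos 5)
Your proof is correct and, especially in part (i), takes a genuinely different route from the paper. The paper proves (i) by telescoping: it forms $\hat k = \mathscr F K^{\Delta^m}(2^{-N}\cdot)-\mathscr F K^{\Delta^m}$ for a suitably large $N$, so that $k_j*f = K_m(2^{-(j+N)},f)-K_m(2^{-j},f)$ is a pure convolution (the constant term $(-1)^m\hat K(0)f$ cancels), and then invokes the self-contained Lemma~\ref{lemma:afsch_Bessel_potential_by_convolutions} for this $k$. Since $\mathscr F K^{\Delta^m}\in C_0(\R^d)$, the telescoped $\hat k$ automatically inherits a Tauberian bound on a dilated annulus whatever $\hat K(0)$ is, so the paper needs no case distinction. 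You dispense with telescoping and invert the full symbol $\psi = \mathscr F K^{\Delta^m}+(-1)^m\hat K(0)$ of $K_m(2^{-n},\cdot)$ against a Littlewood--Paley sequence adapted to an annulus on which $|\psi|$ is bounded below; the cost is the dichotomy $\hat K(0)=0$ (Tauberian) versus $\hat K(0)\neq 0$ (Riemann--Lebesgue near infinity), but both branches are sound and the argument absorbs the content of Lemma~\ref{lemma:afsch_Bessel_potential_by_convolutions} into a single step. Part (ii) shares the paper's skeleton (decompose $f=\sum_n S_n f$, split the offset $k$, use the $\mathcal R$-boundedness hypothesis for $k\ge 0$, win the factor $2^{k(m-s)}$ from the order-$m$ vanishing of the symbol for $k<0$), but the implementation of the $k<0$ half differs: the paper proves a pointwise-in-$h$ $\mathcal R$-bound $\lesssim 2^n(1+|h|)^{d+3}$ for the degree-one symbols $M^{h,n}_j$ in Lemma~\ref{lemma:fm_differences_with_support_cond}, raises it to the $m$-th power (whence the $(1+|\cdot|)^{(d+3)m}$ moment hypothesis), and then integrates against $|K(h)|$; you instead estimate the Mihlin norm of the composed symbol $\psi\cdot\hat\varphi_{1+k}$ directly, which hides the $h$-integral in the constant and in fact runs under a weaker moment hypothesis on $K$. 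The one slip: there is no $\tilde\varphi\in\Phi(\R^d)$ with $S_n=S_n\tilde S_n$ for all $n$, since a genuine Littlewood--Paley partition of unity cannot equal $1$ on the overlapping supports $\supp\hat\varphi_n$. What you want instead, as in Lemma~\ref{lemma:fm_differences_with_support_cond}, is a single $\chi\in C^\infty_c(\R^d\setminus\{0\})$ with $\chi\equiv 1$ on $\frac{1}{2}\supp\hat\varphi_1$ (and $\eta\in C^\infty_c(\R^d)$ with $\eta\equiv 1$ on $\supp\hat\varphi_0$), setting $\tilde S_n := T_{\chi(2^{-n}\cdot)}$ for $n\ge 1$ and $\tilde S_0 := T_\eta$; these are not part of a $\Phi(\R^d)$-sequence, but they reproduce ($S_n=S_n\tilde S_n$), are $\mathcal R$-bounded by Corollary~\ref{prop;R-bddness_fm;nbhd_cond_non-zero_restr}, and satisfy $\sup_N\|\sum_{n=0}^N\varepsilon_n 2^{ns}\tilde S_n f\|_{L^p(\Omega;L^p(\R^d,w;X))}\lesssim\|f\|_{H^s_p(\R^d,w;X)}$, which is all your $k<0$ estimate needs.
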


\begin{remark}
The $\mathcal{R}$-boundedness condition in (ii) of the above theorem may be replaced by the (at first sight) weaker condition that
\[
\normB{\sum_{j=1}^{N}\varepsilon_{j}K_{m}(2^{-j},g_{j})}_{L^{p}(\Omega;L^{p}(\R^{d},w;X))} \lesssim
\normB{\sum_{j=1}^{N}\varepsilon_{j}g_{j}}_{L^{p}(\Omega;L^{p}(\R^{d},w;X))}, \quad\quad N \in \N,
\]
for all $\{g_{j}\}_{j \geq 1} \subset L^{p}(\R^{d},w;X)$ with Fourier support $\supp \hat{g}_{j} \subset \{ \xi : |\xi| \geq c2^{j} \}$, where $c>0$ is some fixed number. But the $\mathcal{R}$-boundedness condition in (ii) is in fact implied by this condition.
Indeed, this condition implies the $\mathcal{R}$-boundedness of the sequence of Fourier multiplier operators associated with the the sequence of symbols $\big\{[(1-\zeta)\widehat{K^{\Delta^{m}}}](2^{-j}\,\cdot\,) \big\}_{j \geq 1}$, where $\zeta \in C^{\infty}_{c}(\R^{d})$ is a bump function which is $1$ on a neighborhood of the set $\{ \xi : |\xi| \geq c \}$. On the other hand, we have $\zeta\widehat{K^{\Delta^{m}}} \in C^{d+2}_{c}(\R^{d})$ in view of $\widehat{K^{\Delta^{m}}} \subset \mathscr{F}L^{1}(\R^{d},(1+|\,\cdot\,|)^{d+2}) \subset C_{b}^{d+2}(\R^{d})$, so that we can apply Theorem~\ref{thm:R-bdd_dilations_FM;general_UMD} to the symbol $\zeta\widehat{K^{\Delta^{m}}}$. We thus find that the sequence of symbols
$\big\{\widehat{K^{\Delta^{m}}_{2^{j}}} = \widehat{K^{\Delta^{m}}}(2^{-j}\,\cdot\,) \big\}_{j \geq 1}$ defines an $\mathcal{R}$-bounded sequence of Fourier multiplier operators on $L^{p}(\R^{d},w;X)$, which is of course equivalent to the $\mathcal{R}$-boundedness condition in (ii).
\end{remark}

\begin{remark}\label{rmk:thm:difference_norms_separate_ineq_'sharp';rmk_sum_integers}
Let $X$ be a Banach space, $s>0$, $p \in (1,\infty)$ and $w \in A_{p}(\R^{d})$. For each $f \in L^{p}(\R^{d},w;X)$ we put
\[
[f]_{H^{s}_{p}(\R^{d},w;X)}^{(m,K);\Z} := \sup_{J \in \N}\normB{\sum_{j=-J}^{J}\varepsilon_{j}2^{js}\varepsilon_{j}2^{js}K_{m}(2^{-j},f)\,}_{L^{p}(\Omega;L^{p}(\R^{d},w;X))}.
\]
On the one hand, $[\,\cdot\,]_{H^{s}_{p}(\R^{d},w;X)}^{(m,K)} \leq [\,\cdot\,]_{H^{s}_{p}(\R^{d},w;X)}^{(m,K);\Z}$ thanks to the contraction principle \eqref{eq:contraction_principle}. On the other hand, $[\,\cdot\,]_{H^{s}_{p}(\R^{d},w;X)}^{(m,K);\Z} \lesssim \norm{\,\cdot\,}_{L^{p}(\R^{d},w;X)} + [\,\cdot\,]_{H^{s}_{p}(\R^{d},w;X)}^{(m,K)}$ because $s>0$ and $\{ f \mapsto K_{m}(2^{-j},f) : j \in \Z \}$ is a uniformly bounded family in $\mathcal{B}(L^{p}(\R^{d},w;X))$.
In Theorem~\ref{thm:difference_norms_separate_ineq_'sharp'} we may thus replace $\normm{\,\cdot\,}_{H^{s}_{p}(\R^{d},w;X)}^{(m,K)}$ by $\norm{\,\cdot\,}_{L^{p}(\R^{d},w;X)} + [\,\cdot\,]_{H^{s}_{p}(\R^{d},w;X)}^{(m,K);\Z}$.
\end{remark}

\begin{ex}\label{ex:thm:difference_norms_separate_ineq_'sharp';examples_cond_rho}
Let $K \in \mathscr{K}(\R^{d})$ and $m \in \Z_{\geq 1}$.
\begin{itemize}
\item[(i)]  Note that $\mathscr{F}K^{\Delta^{m}} \in C_{b}(\R^{d})$ with $\mathscr{F}K^{\Delta^{m}}(0) = \sum_{j=0}^{m-1}(-1)^{j}{m \choose j}\hat{K}(0) = (-1)^{m+1}\hat{K}(0)$. So for $K^{\Delta^{m}}$ to fulfill the Tauberian condition \eqref{eq:thm:difference_norms_separate_ineq_'sharp';(i)_Tauberisch} for some $\epsilon,c > 0$ it is sufficient that $\hat{K}(0) \neq 0$.
\item[(ii)] Let $X$ be a UMD space, $p \in (1,\infty)$ and $w \in A_{p}(\R^{d})$. Note that the $\mathcal{R}$-boundedness conditition in Theorem~\ref{thm:difference_norms_separate_ineq_'sharp'}(ii) is equivalent to the $\mathcal{R}$-boundedness of the convolution operators $\{f \mapsto K^{\Delta^{m}}_{2^{j}}*f : j \in \Z_{\geq 1}  \} \subset \mathcal{B}(L^{p}(\R^{d},w;X))$. By Theorem~\ref{thm:R-bdd_dilations_FM;general_UMD}, for the latter it is sufficient that $K \in L^{1}(\R^{d},(1+|\,\cdot\,|)^{d+2}) \subset \mathscr{F}^{-1}C^{d+2}_{b}(\R^{d})$ fulfills the condition
    \begin{equation}\label{eq:ex:thm:difference_norms_separate_ineq_'sharp';examples_cond_rho;(ii)}
    \sup_{|\alpha| \leq d+2}\sup_{\xi \in \R^{d}}(1+|\xi|)^{|\alpha|+\delta}|D^{\alpha}\hat{K}(\xi)| < \infty
    \end{equation}
    for some $\delta > 0$; in particular, it is sufficient that $K \in \mathcal{S}(\R^{d})$.
\end{itemize}
\end{ex}

Under the availability of better multiplier theorems than $\mathscr{M}_{d+2}(\R^{d}) \hookrightarrow \mathcal{M}_{p,w}(X)$, the condition \eqref{eq:ex:thm:difference_norms_separate_ineq_'sharp';examples_cond_rho;(ii)} can be weakened; see Remark~\ref{rmk:thm:R-bdd_dilations_FM;general_UMD}. For example, in the one-dimensional case $d=1$ we can use $\mathscr{M}_{1}(\R) \hookrightarrow \mathcal{M}_{p,w}(X)$, resulting in the weaker condition that
\[
\sup_{k=0,1}(1+|\xi|)^{k+\delta}|\hat{K}^{(k)}(\xi)| < \infty
\]
for some $\delta > 0$. However, this condition is still to strong to handle the kernel $K=2^{-1}1_{[-1,1]} \in L^{\infty}_{c}(\R^{d}) \subset \mathscr{K}(\R^{d}) \cap \mathscr{F}^{-1}C^{\infty}_{0}(\R^{d})$ with Fourier transform $\hat{K} = \mathrm{sinc}$, where $\mathrm{sinc}(t)=\sin(t)/t$ for $t \neq 0$ and $\mathrm{sinc}(0)=1$. As already announced, in the unweighted case this $K$ can be handled by Corollary~\ref{cor:thm:R-bdd_dilations_via_Mihlin-Holder}:
\begin{ex}\label{ex:thm:difference_norms_separate_ineq_'sharp';R-bdd_indicator_cube}
Let $X$ be a UMD Banach space, $p \in (1,\infty)$ and $K=2^{-d}1_{Q[0,1]}$.
For every $m \in \Z_{\geq 1}$ it holds that $\{ f \mapsto K_{m}(2^{-j},f) : j \in \Z \} \subset \mathcal{B}(L^{p}(\R^{d};X))$ is $\mathcal{R}$-bounded.
\end{ex}
\begin{proof}
It is enough to show that $\{T_{\hat{K}(\ell2^{-j}\,\cdot\,)} : j \in \Z, \ell \in \{1,\ldots,m\} \} = \{ f \mapsto K_{\ell^{-1}2^{j}}*f : j \in \Z, \ell \in \{1,\ldots,m\} \}$ is $\mathcal{R}$-bounded in $\mathcal{B}(L^{p}(\R^{d};X))$. By the product structure of $K$ it suffices to consider the case $d=1$. So we only need to check that $M := \mathrm{sinc} = \mathscr{F}\frac{1}{2}1_{[-1,1]} \in C^{\infty}_{0}(\R)$ satisfies the conditions from Corollary~\ref{cor:thm:R-bdd_dilations_via_Mihlin-Holder}. In the notation of Corollary~\ref{cor:thm:R-bdd_dilations_via_Mihlin-Holder}, let $\gamma \in (\gamma_{X},1)$ be fixed. The condition \eqref{eq:cor:thm:R-bdd_dilations_via_Mihlin-Holder;cond_origin} is fulfilled for $\delta_{0}=1$ because $\mathrm{sinc}$ is a $C^{1}$-function on $[-1,1]$. Furthermore, the condition \eqref{eq:cor:thm:R-bdd_dilations_via_Mihlin-Holder;cond_infinity} is fulfilled for any $\delta_{\infty} \in (0,1-\gamma)$ and $\theta=\gamma$.
\end{proof}

Still consider $K=2^{-d}1_{Q[0,1]} \in L^{\infty}_{c}(\R^{d}) \subset \mathscr{K}(\R^{d})$.
The $\mathcal{R}$-boundedness condition from Theorem~\ref{thm:difference_norms_separate_ineq_'sharp'}(ii) is fulfilled provided that, for each $\ell \in \{1,\ldots,m\}$, the set of convolution operators $\{f \mapsto K_{t}*f : t=\ell^{-1}2^{j}, j \in \Z_{\geq 1}  \} \subset \mathcal{B}(L^{p}(\R^{d},w;X))$ is $\mathcal{R}$-bounded. A nice way to look at the convolution operator $f \mapsto K_{r^{-1}}*f$, $r>0$, is as the averaging operator $A_{r} \in \mathcal{B}(L^{p}(\R^{d},w;X))$ given by
\[
A_{r}f(x) := \fint_{Q[x,r]}f(y)\,dy, \quad\quad f \in L^{p}(\R^{d},w;X), x \in \R^{d}.
\]
This leads to the following natural question:
\begin{question}\label{question:R-bdd_averaging_operators}
Given a UMD space $X$, $p \in (1,\infty)$, $w \in A_{p}(\R^{d})$ and $c > 0$, is the set of averaging operators $\{A_{r} : r = c2^{-j},j \in \Z_{\geq 1}  \}$  $\mathcal{R}$-bounded in $\mathcal{B}(L^{p}(\R^{d},w;X))$?
\end{question}

Three cases in which we can give a positive answer to this question are:
\begin{itemize}
\item[(i)] $X$ is a UMD space, $p \in (1,\infty)$ and $w=1$;
\item[(ii)] $X$ is a UMD space with property $(\alpha)$, $p \in (1,\infty)$ and $w \in A_{p}^{rec}(\R^{d})$;\footnote{Recall that $A_{p}^{rec}$ is the class of weights on $\R^{d}$ which are uniformly $A_{p}$ in each of the coordinates separately.}
\item[(iii)] $X$ is a UMD Banach function space, $p \in (1,\infty)$ and $w \in A_{p}(\R^{d})$;
\end{itemize}
Here case (i) follows similarly to the proof of Example~\ref{ex:thm:difference_norms_separate_ineq_'sharp';R-bdd_indicator_cube}, case (ii) follows from an application of Proposition~\ref{prop:R-bdd_FM_alpha}(ii), and case (iii) can be treated via the Banach lattice version of the Hardy-Littlewood maximal function by using the fact that $\mathcal{R}$-boundedness coincides with $\ell^{2}$-boundedness in this situation (see Proposition~\ref{prop:R-bdd_convolutions_UMD_BFS} for a more general result in this direction).
Note that in the cases (ii) and (iii) one in fact has $\mathcal{R}$-boundedness of $\{A_{r}:r>0\}$ in $\mathcal{B}(L^{p}(\R^{d},w;X))$

\subsection{Proof of the Main Result}

Below we will use the following notation:
\begin{equation*}
X_{p,w} := L^{p}(\Omega;L^{p}(\R^{d},w;X)) = L^{p}(\R^{d},w;L^{p}(\Omega;X)).
\end{equation*}
\begin{equation*}
X_{p,w}(\R^{d}_{\pm}) := L^{p}(\Omega;L^{p}(\R^{d}_{\pm},w;X)) = L^{p}(\R^{d}_{\pm},w;L^{p}(\Omega;X)).
\end{equation*}

\paragraph{\textbf{Proof of Theorem~\ref{thm:difference_norms_separate_ineq_'sharp'}(i).}}

\begin{lemma}\label{lemma:afsch_Bessel_potential_by_convolutions}
Let $X$ be a UMD space, $s \in \R$, $p \in (1,\infty)$ and $w \in A_{p}(\R^{d})$.
Suppose that $k \in \mathscr{K}(\R^{d}) \cap L^{1}(\R^{d},(1+|\,\cdot\,|)^{d+2})$ fulfills the Tauberian condition
\[
|\hat{k}(\xi)| > 0, \quad\quad \xi \in \R^{d}, \frac{\epsilon}{2} < |\xi| < 2\epsilon,
\]
for some $\epsilon > 0$.
For $f \in L^{p}(\R^{d},w;X)$ we can then estimate
\begin{equation}\label{eq:lemma:afsch_Bessel_potential_by_convolutions;estimate}
\norm{f}_{H^{s}_{p}(\R^{d},w;X)} \lesssim \norm{f}_{L^{p}(\R^{d},w;X)} +
\sup_{J \in \N}\normB{\sum_{j=1}^{J}\varepsilon_{j}2^{js}k_{j}*f}_{X_{p,w}}.
\end{equation}
\end{lemma}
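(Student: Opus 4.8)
The plan is to combine the randomized Littlewood--Paley decomposition \eqref{eq:randomized-LP-decomp} with an approximate reconstruction of the pieces $S_n f = \varphi_n * f$ (for a fixed $\varphi = (\varphi_n)_{n\in\N}\in\Phi(\R^d)$) out of the convolutions $k_j * f$, the latter being made possible by the Tauberian hypothesis on $\hat k$. First I would use \eqref{eq:randomized-LP-decomp} to write $\norm{f}_{H^s_p(\R^d,w;X)} \eqsim \sup_N \big\| \sum_{n=0}^N \varepsilon_n 2^{ns} S_n f \big\|_{X_{p,w}}$. Since $\varphi_0 \in \mathcal{S}(\R^d)\subset\mathscr{K}(\R^d)$, the term $S_0 f$ is dominated by $\norm{f}_{L^p(\R^d,w;X)}$, so by the triangle inequality in $L^p(\Omega;\,\cdot\,)$ it suffices to estimate $\sup_N\big\|\sum_{n=1}^N\varepsilon_n 2^{ns}S_n f\big\|_{X_{p,w}}$.

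Next I would invert $\hat k$ on the Tauberian annulus. Since $k \in L^1(\R^d,(1+|\,\cdot\,|)^{d+2})$, we have $\hat k \in C^{d+2}_b(\R^d)$, so, combined with $|\hat k| > 0$ on $\{\epsilon/2 < |\xi| < 2\epsilon\}$, one can fix $\theta \in C^\infty_c(\R^d)$ with $\supp\theta\subset\{\epsilon/2<|\xi|<2\epsilon\}$ and $\sum_{j\in\Z}\theta(2^{-j}\xi)=1$ for all $\xi\neq0$ (a standard dyadic dilation partition of unity; the annular ratio $4$ exceeds $2$), and set $\mu := \theta/\hat k \in C^{d+2}_c(\R^d\setminus\{0\})\subset\mathscr{M}_{d+2}(\R^d)$, $\mu_j := \mu(2^{-j}\,\cdot\,)$. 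Then $\mu_j\,\widehat{k_j} = \theta(2^{-j}\,\cdot\,)$, so $T_{\mu_j}(k_j*f) = \mathscr{F}^{-1}[\theta(2^{-j}\,\cdot\,)\hat f]$. Because $\supp\hat\varphi_n$ and $\supp\theta(2^{-j}\,\cdot\,)$ are fixed annuli centered at $|\xi|\sim 2^n$ and $|\xi|\sim 2^j\epsilon$ respectively, there is a constant $C_0=C_0(\varphi,\epsilon)\in\N$ with $\hat\varphi_n\,\theta(2^{-j}\,\cdot\,)\equiv 0$ whenever $|j-n|>C_0$; writing $\hat\varphi_n = \sum_{|\ell|\le C_0}\hat\varphi_n\theta(2^{-(n+\ell)}\,\cdot\,)$ then yields, for $n\ge1$,
\[
2^{ns}S_n f = \sum_{|\ell|\le C_0}2^{-\ell s}\,T_{\hat\varphi_n\mu_{n+\ell}}\big(2^{(n+\ell)s}\,k_{n+\ell}*f\big).
\]

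The key point is the $\mathcal{R}$-boundedness of $\{T_{\hat\varphi_n\mu_{n+\ell}}\}_{n\in\N}$ on $L^p(\R^d,w;X)$, uniformly over $\ell\in\{-C_0,\dots,C_0\}$. For fixed $\ell$ one has $\hat\varphi_n\mu_{n+\ell} = \nu_\ell(2^{-n}\,\cdot\,)$ with $\nu_\ell := \hat\varphi_1(2\,\cdot\,)\,\mu(2^{-\ell}\,\cdot\,)\in C^{d+2}_c(\R^d)$, so by dilation invariance of the Mihlin norm these symbols are uniformly bounded in $\mathscr{M}_{d+2}(\R^d)$, and their supports satisfy a dyadic corona condition ($\supp(\hat\varphi_n\mu_{n+\ell})\subset\supp\hat\varphi_n$); hence Corollary~\ref{prop;R-bddness_fm;nbhd_cond_non-zero_restr} together with Example~\ref{ex:prop;R-bddness_fm;nbhd_cond_non-zero_restr;ex_cond(a);dyadic_support} applies. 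I would then split the $(n+\ell)$-sum into a main part ($n+\ell\ge1$) and a remainder ($n+\ell\le0$). In the main part, apply the $\mathcal{R}$-bound with the vectors $g_n := 2^{(n+\ell)s}k_{n+\ell}*f \in L^p(\R^d,w;X)$ and reindex $j = n+\ell$, using that $(\varepsilon_{j-\ell})_j$ and $(\varepsilon_j)_j$ are identically distributed to rewrite $\big\|\sum_n\varepsilon_n g_n\big\|_{X_{p,w}}$ as $\big\|\sum_{j=a}^{b}\varepsilon_j 2^{js}k_j*f\big\|_{X_{p,w}}$ with $1\le a\le b$, which is $\le 2\sup_J\big\|\sum_{j=1}^J\varepsilon_j 2^{js}k_j*f\big\|_{X_{p,w}}$; summing the finitely many $\ell$ (with bounded weights $2^{-\ell s}$) controls the main part by the right-hand side of \eqref{eq:lemma:afsch_Bessel_potential_by_convolutions;estimate}. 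The remainder involves only indices $1\le n\le C_0$ (since $n+\ell\le0$ and $\ell\ge-C_0$), so by the triangle inequality in $L^p(\Omega;\,\cdot\,)$ and the uniform boundedness of $S_n$, $T_{\mu_{n+\ell}}$ and $k_{n+\ell}*(\,\cdot\,)$ on $L^p(\R^d,w;X)$ it is $\lesssim\norm{f}_{L^p(\R^d,w;X)}$. Combining everything gives \eqref{eq:lemma:afsch_Bessel_potential_by_convolutions;estimate}.

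I expect the main obstacle to be the $\mathcal{R}$-boundedness step: one must manufacture the corrector multipliers $\mu_j$ with good uniform behaviour (uniform Mihlin bounds plus a dyadic support pattern) so that the Rademacher-sum norm genuinely transfers from the $S_n$-side to the $k_j$-side — and this is precisely where the integrability hypothesis $k\in L^1(\R^d,(1+|\,\cdot\,|)^{d+2})$ (guaranteeing $\hat k\in C^{d+2}_b$) and the Tauberian lower bound on $\hat k$ enter. The rest is routine bookkeeping of the finitely many shift parameters $\ell$ and of the low-frequency and boundary end terms.
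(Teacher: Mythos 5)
Your proof is correct, and it is essentially the same strategy as the paper's, with one presentational difference worth noting. You build the corrector multipliers from a genuine dyadic partition of unity $\theta$ supported in the Tauberian annulus and set $\mu = \theta/\hat k$, so that $\mu_j\widehat{k_j} = \theta(2^{-j}\,\cdot\,)$; the reconstruction of $\hat\varphi_n$ then naturally involves two-sided shifts $\ell \in \{-C_0,\dots,C_0\}$, and for negative $\ell$ you must handle the finitely many indices with $n+\ell\le 0$ as a crude remainder absorbed into $\norm{f}_{L^p(\R^d,w;X)}$. The paper avoids this by \emph{choosing} $\varphi\in\Phi(\R^d)$ with $\supp\hat\varphi_1\subset\{|\xi|\ge 2\epsilon\}$ and using a telescoping identity $\sum_{l=j}^{j+N} m_l\hat k_l = \eta(2^{-(j+N)}\,\cdot\,)-\eta(2^{-j+1}\,\cdot\,) \equiv 1$ on $\supp\hat\varphi_j$ with $m = (\eta-\eta(2\,\cdot\,))/\hat k$: since the pieces $\supp\hat\varphi_j$ then sit at frequencies strictly above $2^j\epsilon$, all shifts are one-sided ($l\in\{0,\dots,N\}$) and no low-frequency remainder arises. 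Otherwise the key ingredients match: $\hat k\in C^{d+2}_b$ from the weighted integrability hypothesis makes $\mu$ (resp.\ $m$) a legitimate $C^{d+2}_c(\R^d\setminus\{0\})$ symbol; its dyadic dilations satisfy the corona condition of Example~\ref{ex:prop;R-bddness_fm;nbhd_cond_non-zero_restr;ex_cond(a);dyadic_support}, so Corollary~\ref{prop;R-bddness_fm;nbhd_cond_non-zero_restr} gives $\mathcal{R}$-boundedness; and the shift-invariance in distribution of the Rademacher sequence transfers the estimate to the $k_j*f$ side, with the finite sum over $\ell$ controlled by the contraction principle. Your version trades the convenience of a tailored $\varphi$ for a slightly longer bookkeeping of boundary terms, but the argument is sound.
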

\begin{proof}
Pick $\varphi=(\varphi_{j})_{j \in \N} \in \Phi(\R^{d})$ such that $\supp \hat{\varphi}_{1} \subset \{ \xi : |\xi| \geq 2\epsilon \}$; see \eqref{eq:Fourier_support_LP-seq}. Using \eqref{eq:randomized-LP-decomp} in combination with $S_{0} \in \mathcal{B}(L^{p}(\R^{d},w;X))$, we get
\[
\norm{f}_{H^{s}_{p}(\R^{d},w;X)} \lesssim \norm{f}_{L^{p}(\R^{d},w;X)} + \sup_{J \in \N}\normB{\sum_{j=1}^{J}\varepsilon_{j}2^{js}S_{j}f}_{X_{p,w}}.
\]
In view of the contraction principle \eqref{eq:contraction_principle}, it is thus enough to find an $N \in \N$ such that
\begin{equation}\label{eq:lemma:afsch_Bessel_potential_by_convolutions;suffices}
\normB{\sum_{j=1}^{J}\varepsilon_{j}2^{js}S_{j}f}_{X_{p,w}} \lesssim
\normB{\sum_{j=1}^{J+N}\varepsilon_{j}2^{js}k_{j}*f}_{X_{p,w}}, \quad\quad f \in L^{p}(\R^{d},w;X), J \in \N.
\end{equation}

In order to establish \eqref{eq:lemma:afsch_Bessel_potential_by_convolutions;suffices},
pick $\eta \in C^{\infty}_{c}(\R^{d})$ with $\supp \eta \subset B(0,2\epsilon)$ and $\eta(\xi)=1$ for $|\xi| \leq~\frac{3\epsilon}{2}$.
Define $m \in C^{d+2}_{c}(\R^{d}) \subset \mathscr{M}_{d+2}(\R^{d})$ by $m(\xi):= [\eta(\xi)-\eta(2\xi)]\hat{k}(\xi)^{-1}$ if $\frac{\epsilon}{2} < |\xi| < 2\epsilon$ and $m(\xi):=0$ otherwise; note that this gives a well-defined $C^{d+2}$-function on $\R^{d}$ because $\eta-\eta(2\,\cdot\,)$ is a smooth function supported in the set $\{ \xi : \frac{\epsilon}{2} < |\xi| < 2\epsilon\}$ on which the function $\hat{k} \in C^{d+2}(\R^{d})$ does not vanish, where the regularity $\hat{k} \in C^{d+2}(\R^{d})$ is a consequence of the assumption that $k \in L^{1}(\R^{d},(1+|\,\cdot\,|)^{d+2})$.
By Example~\ref{ex:prop;R-bddness_fm;nbhd_cond_non-zero_restr;ex_cond_(a)en(b)}, the sequence of (dyadic) dilated symbols $\{m_{j}:=m(2^{-j}\,\cdot\,)\}_{j \geq 1}$ defines an $\mathcal{R}$-bounded sequence of Fourier multiplier operators $\{T_{m_{j}}\}_{j \geq 1}$ on $L^{p}(\R^{d},w;X)$.
Furthermore, by construction we have
\[
\sum_{l=j}^{j+N}m_{l}\hat{k}_{l}(\xi) = \eta(2^{-(j+N)}\xi)-\eta(2^{-j+1}\xi) = 1 \quad
\mbox{for}\:\: 2^{j}\epsilon \leq |\xi| \leq 2^{j+N-1}3\epsilon, j \geq 1, N \in \N.
\]
Since $\supp \hat{\varphi}_{j} \subset \{ \xi : 2^{j}\epsilon \leq |\xi| < 2^{j}B \}$ for every $j \geq 1$ for some $B>\epsilon$, there thus exists $N \in \N$ such that $\sum_{l=j}^{j+N}m_{l}\hat{k}_{l} \equiv 1$ on $\supp \hat{\varphi}_{j}$ for all $j \geq 1$.
For each $j \geq 1$ we consequently have
\[
S_{j} = T_{\hat{\varphi}_{j}} = T_{\hat{\varphi}_{j}\left(\sum_{l=j}^{j+N}m_{l}\hat{k}_{l}\right)} =
\sum_{l=j}^{j+N}T_{\hat{\varphi}_{j}}T_{m_{l}}T_{\hat{k}_{l}} = \sum_{l=0}^{N}S_{j}T_{m_{j+l}}[k_{j+l}*\,\cdot\,]
\quad \mbox{in}\:\: \mathcal{B}(L^{p}(\R^{d},w;X)).
\]
Using this together with the $\mathcal{R}$-boundedness of $\{S_{j}\}_{j \in \N}$ and $\{T_{m_{j}}\}_{j \geq 1}$ (see Example~\ref{ex:prop;R-bddness_fm;nbhd_cond_non-zero_restr;ex_cond_(a)en(b)}), for each $f \in L^{p}(\R^{d},w;X)$ we obtain the estimates
\begin{eqnarray*}
\normB{\sum_{j=1}^{J}\varepsilon_{j}2^{js}S_{j}f}_{X_{p,w}}
&\leq& \sum_{l=0}^{N}\normB{\sum_{j=1}^{J}\varepsilon_{j}2^{js}S_{j}T_{m_{j+l}}[k_{j+l}*f]}_{X_{p,w}} \\
&\lesssim& \sum_{l=0}^{N}\normB{\sum_{j=1}^{J}\varepsilon_{j}2^{js}k_{j+l}*f}_{X_{p,w}} \\
&\lesssim& \normB{\sum_{j=1}^{J+N}\varepsilon_{j}2^{js}k_{j}*f}_{X_{p,w}}.
\end{eqnarray*}
\end{proof}

\begin{proof}[Proof of Theorem~\ref{thm:difference_norms_separate_ineq_'sharp'}(i)]
In view of \eqref{eq:thm:difference_norms_separate_ineq_'sharp';(i)_Tauberisch} and the fact that $\mathscr{F}K^{\Delta^{m}} \in C_{0}(\R^{d})$, there exists $N \in \N$ such that the function $k \in \mathscr{K}(\R^{d}) \cap L^{1}(\R^{d},(1+|\,\cdot\,|)^{d+2})$ determined by  $\hat{k} = \mathscr{F}K^{\Delta^{m}}(2^{-N}\,\cdot\,) - \mathscr{F}K^{\Delta^{m}}$ fulfills the Tauberian condition
\[
|\hat{k}(\xi)| \geq \frac{c}{2} > 0, \quad\quad \xi \in \R^{d}, \frac{\delta}{2} < |\xi| < 2\delta,
\]
for $\delta:= 2^{N}\epsilon > 0$. Since
\begin{eqnarray*}
k_{j}*f
&=& [K^{\Delta^{m}}_{2^{-(j+N)}}*f +(-1)^{m}\hat{K}(0)f] - [K^{\Delta^{m}}_{2^{-j}}*f +(-1)^{m}\hat{K}(0)f] \\
&=& K_{m}(2^{-(j+N)},f) - K_{m}(2^{-j},f), \quad\quad j \geq 1,
\end{eqnarray*}
with Lemma~\ref{lemma:afsch_Bessel_potential_by_convolutions} it follows that
\begin{eqnarray*}
\norm{f}_{H^{s}_{p}(\R^{d},w;X)}
&\lesssim& \norm{f}_{L^{p}(\R^{d},w;X)} + \sup_{J}\normB{\sum_{j=1}^{J}\varepsilon_{j}2^{js}k_{j}*f}_{X_{p,w}} \\
&\lesssim& \norm{f}_{L^{p}(\R^{d},w;X)} +
\sup_{J}2^{-Ns}\normB{\sum_{j=1}^{J}\varepsilon_{j}2^{(j+N)s}K_{m}(2^{-(j+N)},f)}_{X_{p,w}} \\
&& \quad +\: \sup_{J}\normB{\sum_{j=1}^{J}\varepsilon_{j}2^{js}K_{m}(2^{-j},f)}_{X_{p,w}}\\
&\stackrel{\eqref{eq:contraction_principle}}{\leq}& \norm{f}_{L^{p}(\R^{d},w;X)} + (2^{-Ns}+1)[f]_{H^{s}_{p}(\R^{d},w;X)}^{(m,K)}.
\end{eqnarray*}

\end{proof}

\paragraph{\textbf{Proof of Theorem~\ref{thm:difference_norms_separate_ineq_'sharp'}(ii).}}

\begin{lemma}\label{lemma:fm_differences_with_support_cond}
Let $X$ be a UMD space, $p \in (1,\infty)$ and $w \in A_{p}(\R^{d})$.
Let $\chi \in C^{\infty}_{c}(\R^{d}\setminus\{0\})$ and $\eta \in C^{\infty}_{c}(\R^{d})$.
For each $n \in \Z_{\leq 0}$ and $h \in \R^{d}$ we define the sequence of symbols
$\{M^{h,n}_{j}\}_{j \in \Z} \subset L^{\infty}(\R^{d})$ by
\[
M^{h,n}_{j}(\xi) := \left\{\begin{array}{ll}
(e^{\imath 2^{-j}h\cdot\xi}-1)\chi(2^{-(n+j)}\xi), & n+j \geq 1\\
(e^{\imath 2^{-j}h\cdot\xi}-1)\eta(2^{-(n+j)}\xi), & n+j = 0 \\
0, & n+j \leq -1
\end{array}\right.
\]
Then each symbol $M^{h,n}_{j}$ defines a bounded Fourier multiplier operator $T^{h,n}_{j} = T_{M^{h,n}_{j}}$ on $L^{p}(\R^{d},w;X)$ such that the following $\mathcal{R}$-bound is valid:
\begin{equation}\label{eq:lemma:fm_differences_with_support_cond;R-bound}
\mathcal{R}\{ T^{h,n}_{j} : j \in \Z\} \lesssim 2^{n}(1+|h|)^{d+3}, \quad\quad h \in \R^{d}, n \in \Z_{\leq 0}.
\end{equation}
\end{lemma}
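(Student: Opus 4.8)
The plan is to realise, for each fixed $h \in \R^{d}$ and $n \in \Z_{\leq 0}$, the family $\{M^{h,n}_{j}\}_{j \in \Z}$ as a sequence of dyadic dilations of two fixed bump functions, so that Corollary~\ref{prop;R-bddness_fm;nbhd_cond_non-zero_restr} applies, and then to control the relevant Mihlin norms by $2^{n}(1+|h|)^{d+3}$.

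First I would reindex by the ``active scale'' $k = n+j$. Writing $\zeta = 2^{-(n+j)}\xi$ one checks that $2^{-j}\xi = 2^{n}\zeta$, so that
\begin{gather*}
M^{h,n}_{j}(\xi) = \tilde{\chi}^{h,n}\big(2^{-(n+j)}\xi\big) \quad \text{for}\ n+j \geq 1, \\
M^{h,n}_{j}(\xi) = \tilde{\eta}^{h,n}\big(2^{-(n+j)}\xi\big) \quad \text{for}\ n+j = 0,
\end{gather*}
where $\tilde{\chi}^{h,n}(\zeta) := (e^{\imath 2^{n}h\cdot\zeta}-1)\chi(\zeta)$ and $\tilde{\eta}^{h,n}(\zeta) := (e^{\imath 2^{n}h\cdot\zeta}-1)\eta(\zeta)$, while $M^{h,n}_{j} \equiv 0$ for $n+j \leq -1$. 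Setting $m_{0} := \tilde{\eta}^{h,n}$ and $m_{i} := \tilde{\chi}^{h,n}(2^{-i}\,\cdot\,)$ for $i \geq 1$, the collection $\{T^{h,n}_{j} : j \in \Z\}$ equals $\{T_{m_{i}} : i \in \N\}$ together with the zero operator, and hence has the same $\mathcal{R}$-bound as $\{T_{m_{i}} : i \in \N\}$.

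Next I would verify the two hypotheses of Corollary~\ref{prop;R-bddness_fm;nbhd_cond_non-zero_restr} for $\{m_{i}\}_{i \in \N}$. Because $\supp\chi$ is a compact subset of $\R^{d}\setminus\{0\}$, say $\supp\chi \subseteq \{\xi : c_{1} \leq |\xi| \leq c_{2}\}$ with $0 < c_{1} < c_{2} < \infty$, and $\supp\eta$ is compact, the symbols $m_{i}$ obey the dyadic corona support condition of Example~\ref{ex:prop;R-bddness_fm;nbhd_cond_non-zero_restr;ex_cond(a);dyadic_support} with parameters $c$ and $J$ depending only on $\chi$ and $\eta$; so condition~(a) holds with an $N$ depending only on $\chi$ and $\eta$. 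For condition~(b) I would use the scale invariance of $\norm{\,\cdot\,}_{\mathscr{M}_{d+2}}$ to reduce to bounding $\norm{\tilde{\chi}^{h,n}}_{\mathscr{M}_{d+2}}$ and $\norm{\tilde{\eta}^{h,n}}_{\mathscr{M}_{d+2}}$. Since $\tilde{\chi}^{h,n}$ and $\tilde{\eta}^{h,n}$ are supported in the fixed bounded sets $\supp\chi$ resp. $\supp\eta$, on which $|\zeta|^{|\alpha|}$ is comparable to a constant, it suffices to bound $\norm{D^{\alpha}[(e^{\imath 2^{n}h\cdot\zeta}-1)\psi(\zeta)]}_{\infty}$ for $\psi \in \{\chi,\eta\}$ and $|\alpha| \leq d+2$. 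Expanding by Leibniz into a sum of terms $D^{\beta}(e^{\imath 2^{n}h\cdot\zeta}-1)\,D^{\alpha-\beta}\psi(\zeta)$, I would use that on the bounded support $|e^{\imath 2^{n}h\cdot\zeta}-1| \leq 2^{n}|h\cdot\zeta| \lesssim 2^{n}|h|$ for $\beta = 0$, while $|D^{\beta}e^{\imath 2^{n}h\cdot\zeta}| = 2^{n|\beta|}|h^{\beta}| \leq 2^{n}|h|^{|\beta|}$ for $|\beta| \geq 1$ (here $n \leq 0$ and $|\beta| \geq 1$ give $2^{n|\beta|} \leq 2^{n}$), and that $D^{\alpha-\beta}\psi$ is bounded; every term is then $\lesssim_{\chi,\eta,d} 2^{n}(1+|h|)^{d+2} \leq 2^{n}(1+|h|)^{d+3}$. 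This gives $\sup_{i}\norm{m_{i}}_{\mathscr{M}_{d+2}} \lesssim_{\chi,\eta,d} 2^{n}(1+|h|)^{d+3}$; in particular each $m_{i}$, hence each $M^{h,n}_{j}$, belongs to $\mathscr{M}_{d+2}(\R^{d}) \hookrightarrow \mathcal{M}_{p,w}(X)$ and defines a bounded Fourier multiplier operator.

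Finally, Corollary~\ref{prop;R-bddness_fm;nbhd_cond_non-zero_restr} would give
\begin{align*}
\mathcal{R}\{T^{h,n}_{j} : j \in \Z\}
&= \mathcal{R}\{T_{m_{i}} : i \in \N\} \\
&\lesssim C_{X,p,d}([w]_{A_{p}})\,N\,\sup_{i}\norm{m_{i}}_{\mathscr{M}_{d+2}} \ \lesssim \ 2^{n}(1+|h|)^{d+3},
\end{align*}
with implicit constants depending only on $X$, $p$, $d$, $w$, $\chi$ and $\eta$, which is precisely \eqref{eq:lemma:fm_differences_with_support_cond;R-bound}. The only genuine computation is the Mihlin-norm estimate for $\tilde{\chi}^{h,n}$ and $\tilde{\eta}^{h,n}$, and the single subtlety there is to differentiate the modulation factor $e^{\imath 2^{n}h\cdot\zeta}$ carefully enough that each derivative of positive order contributes a factor $2^{n}$ (using $n \leq 0$) rather than merely a power of $|h|$; this is what keeps the bound proportional to $2^{n}$.
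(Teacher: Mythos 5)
Your proposal is correct and takes essentially the same route as the paper: both reduce to Corollary~\ref{prop;R-bddness_fm;nbhd_cond_non-zero_restr} via the dyadic support structure, and both do a Leibniz-rule bound on the Mihlin norm in which the crucial point is that, because $n\le 0$, each derivative of the modulation factor contributes $2^{n}$ (not merely a power of $|h|$). The one real organizational difference is that you rescale first, noting $2^{-j}h\cdot\xi = 2^{n}h\cdot\zeta$ with $\zeta = 2^{-(n+j)}\xi$, so that the whole family is realized as dyadic dilates of the two fixed compactly supported symbols $\tilde{\chi}^{h,n}$ and $\tilde{\eta}^{h,n}$, and then invoke the dilation invariance of $\norm{\cdot}_{\mathscr{M}_{d+2}}$; the paper instead works directly with $M^{h,n}_{j}(\xi)$, using the representation $e^{\imath 2^{-j}h\cdot\xi}-1 = \imath h\cdot\xi\int_{0}^{2^{-j}}e^{\imath sh\cdot\xi}\,ds$ and the support bound $|\xi|\le R2^{n+j}$ to extract $2^{n}$. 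Your version is slightly cleaner and in fact yields the marginally sharper exponent $(1+|h|)^{d+2}$, which of course still implies the stated bound.
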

\begin{proof}
By construction, $\{M^{h,n}_{j}\}_{j \in \Z} \subset C^{\infty}_{c}(\R^{d})$ satisfies condition (a) of Corollary~\ref{prop;R-bddness_fm;nbhd_cond_non-zero_restr} for some $N \in \N$ independent of $n \in \Z_{\leq 0}$ and $h \in \R^{d}$.
Therefore, it is enough to show that
\begin{equation}\label{eq:lemma:fm_differences_with_support_cond;suffices}
\norm{M^{h,n}_{j}}_{\mathscr{M}_{d+2}} \lesssim 2^{n}(1+|h|)^{d+3}, \quad\quad h \in \R^{d}, n \in \Z_{\leq 0}, j \in \Z.
\end{equation}

We only consider the case $n+j \geq 1$ in \eqref{eq:lemma:fm_differences_with_support_cond;suffices}, the case $n+j=0$ being comletely similar and the case $n+j \leq -1$ being trivial. Let $h \in \R^{d}$, $n \in \Z_{\leq 0}$ and $j \in \Z$ with $n+j \geq 1$ be given.
Fix a multi-index $\alpha \in \N^{d}$ with $|\alpha| \leq d+2$.
Using the Leibniz rule, we compute
\begin{eqnarray*}
|\xi|^{|\alpha|}D^{\alpha}M^{h,n}_{j}(\xi)
&=& |\xi|^{|\alpha|}D^{\alpha}_{\xi}\left( \imath h \cdot \xi \int_{0}^{2^{-j}}e^{\imath sh \cdot \xi}ds \, \chi(2^{-(n+j)}\xi) \right) \\
&=& \imath \sum_{\beta+\gamma \leq \alpha}c^{\alpha}_{\beta,\gamma} |\xi|^{|\beta|}D^{\beta}_{\xi}(h \cdot \xi)\,|\xi|^{|\gamma|}D^{\gamma}_{\xi}\left( \int_{0}^{2^{-j}}e^{\imath sh \cdot \xi}ds\right) \,|\xi|^{|\alpha|-|\beta|-|\gamma|}D^{\alpha-\beta-\gamma}_{\xi}[\chi(2^{-(n+j)}\xi)] \\
&=& \imath \sum_{\gamma \leq \alpha}c^{\alpha}_{0,\gamma} \, h \cdot \xi \,\,|\xi|^{|\gamma|} \int_{0}^{2^{-j}}(\imath sh)^{\gamma}e^{\imath sh \cdot \xi}ds\, |2^{-(n+j)}\xi|^{|\alpha|-|\gamma|}[D^{\alpha-\gamma}\chi](2^{-(n+j)}\xi) \\
&& + \:\: \imath \sum_{\beta+\gamma \leq \alpha; |\beta| = 1}c^{\alpha}_{\beta,\gamma} |\xi|h^{\beta} \,|\xi|^{|\gamma|}\int_{0}^{2^{-j}}(\imath sh)^{\gamma}e^{\imath sh \cdot \xi}ds \, |2^{-(n+j)}\xi|^{|\alpha|-|\beta|-|\gamma|}[D^{\alpha-\beta-\gamma}\chi](2^{-(n+j)}\xi).
\end{eqnarray*}
Picking $R > 0$ such that $\supp \chi \subset B(0,R)$, we can estimate
\begin{eqnarray*}
|\xi|^{|\alpha|}|D^{\alpha}M^{h,n}_{j}(\xi)|
&\lesssim& \sum_{\gamma \leq \alpha}  \,|h|^{|\gamma|+1}2^{-j(|\gamma|+1)}\,1_{B(0,R)}(2^{-(n+j)}\xi)\,|\xi|^{|\gamma|+1}\,\norm{\chi}_{\mathscr{M}_{d+2}} \\
&& + \:\: \sum_{\beta+\gamma \leq \alpha; |\beta| = 1} \,|h|^{|\gamma|+1}2^{-j(|\gamma|+1)}\,1_{B(0,R)}(2^{-(n+j)}\xi)\,|\xi|^{|\gamma|+1}\,\norm{\chi}_{\mathscr{M}_{d+2}} \\
&\leq& 2\norm{\chi}_{\mathscr{M}_{d+2}} \sum_{\gamma \leq \alpha}|h|^{|\gamma|+1}2^{n(|\gamma|+1)}R^{|\gamma|+1} \\
&\stackrel{n \leq 0}{\lesssim}& 2^{n}(1+|h|)^{d+3}.
\end{eqnarray*}
This proves the required estimate \eqref{eq:lemma:fm_differences_with_support_cond;suffices}.
\end{proof}

\begin{proof}[Proof of Theorem~\ref{thm:difference_norms_separate_ineq_'sharp'}(ii)]
Given $f \in L^{p}(\R^{d},w;X)$, write $f_{n}:=S_{n}f$ for $n \in \N$ and $f_{n}:=0$ for $n \in \Z_{<0}$.
For each $j \in \Z_{>0}$ we then have $f = \sum_{n \in \Z}f_{n+j}$ in $L^{p}(\R^{d},w;X)$, from which it follows that
\begin{equation}\label{eq:thm:difference_norms_separate_ineq_'sharp';(ii)_sum_n}
\normB{\sum_{j=1}^{J}\varepsilon_{j}2^{js}K_{m}(2^{-j},f)}_{X_{p,w}}
\leq \sum_{n \in \Z}\normB{\sum_{j=1}^{J}\varepsilon_{j}2^{js}K_{m}(2^{-j},f_{n+j})}_{X_{p,w}}.
\end{equation}

We first estimate the sum over $n \in \Z_{>0}$ in \eqref{eq:thm:difference_norms_separate_ineq_'sharp';(ii)_sum_n}.
Using the $\mathcal{R}$-boundedness of $\{f \mapsto K_{m}(2^{-j},f) : j \geq 1 \}$, we find
\[
\normB{\sum_{j=1}^{J}\varepsilon_{j}2^{js}K_{m}(2^{-j},f_{n+j})}_{X_{p,w}} \lesssim
2^{-ns}\normB{\sum_{j=1}^{J}\varepsilon_{j}2^{(n+j)s}f_{n+j}}_{X_{p,w}} \leq 2^{-ns}\norm{f}_{H^{s}_{p}(\R^{d},w;X)}.
\]
Since $s>0$, it follows that the sum over $n \in \Z_{>0}$ in \eqref{eq:thm:difference_norms_separate_ineq_'sharp';(ii)_sum_n} can be estimated from above by $C\norm{f}_{H^{s}_{p}(\R^{d},w;X)}$ for some constant $C$ independent of $f$ and $J$.

Next we estimate the sum over $n \in \Z_{\leq 0}$ in \eqref{eq:thm:difference_norms_separate_ineq_'sharp';(ii)_sum_n}.
To this end, let $\chi \in C^{\infty}_{c}(\R^{d} \setminus \{0\})$ and $\eta \in C^{\infty}_{c}$ be such that $\chi \equiv 1$ on $\frac{1}{2}\supp \hat{\varphi}_{1}$ and $\eta \equiv 1$ on $\supp \hat{\varphi}_{0}$.
For every $\lambda \in \C$ we define the function $e_{\lambda}:\R^{d} \to \C$ by $e_{\lambda}(\xi):= e^{\lambda \cdot \xi}$.
For each $n \leq 0$, $h \in \R^{d}$ and $j \geq 1$, we then have
\begin{eqnarray*}
\Delta^{m}_{2^{-j}h}f_{n+j}
&=& \mathscr{F}^{-1}[(e_{\imath 2^{-j}h}-1)^{m}\hat{f}_{n+j}] \\
&=& \left\{\begin{array}{ll}
\mathscr{F}^{-1}\left[\left(e_{\imath 2^{-j}h}-1)\chi(2^{-(n+j)}\,\cdot\,)\right)^{m}\hat{f}_{n+j}\right], & n+j \geq 1;\\
\mathscr{F}^{-1}\left[\left(e_{\imath 2^{-j}h}-1)\eta(2^{-(n+j)}\,\cdot\,)\right)^{m}\hat{f}_{n+j}\right], & n+j = 0; \\
0, & n+j \leq -1.
\end{array}\right. \\
&=& T_{M^{h,n}_{j}}^{m}f_{n+j},
\end{eqnarray*}
where $M^{h,n}_{j}$ is the Fourier multiplier symbol from Lemma~\ref{lemma:fm_differences_with_support_cond}.
For each $n \leq 0$ we thus get
\begin{eqnarray*}
\normB{\sum_{j=1}^{J}\varepsilon_{j}2^{js}K_{m}(2^{-j},f_{n+j})}_{X_{p,w}}
&\leq& \int_{\R^{d}}|K(h)|\,\normB{\sum_{j=1}^{J}\varepsilon_{j}2^{js}\Delta^{m}_{2^{-j}h}f_{n+j}(\,\cdot\,)}_{X_{p,w}}dh \\
&=& \int_{\R^{d}}|K(h)|\,\normB{\sum_{j=1}^{J}\varepsilon_{j}2^{js}T_{M^{h,n}_{j}}^{m}f_{n+j}}_{X_{p,w}}dh \\
&\stackrel{\eqref{eq:lemma:fm_differences_with_support_cond;R-bound}}{\lesssim}&
2^{n(m-s)}\int_{\R^{d}}|K(h)|(1+|h|)^{(d+3)m}dh  \\
&& \quad\quad \cdot \:\: \normB{\sum_{j=1}^{J}\varepsilon_{j}2^{(n+j)s}f_{n+j}}_{X_{p,w}} \\
&\stackrel{\eqref{eq:randomized-LP-decomp}}{\lesssim}& 2^{n(m-s)}\norm{f}_{H^{s}_{p}(\R^{d},w;X)}.
\end{eqnarray*}
Since $m-s > 0$, it follows that the sum over $n \in \Z_{\leq 0}$ in \eqref{eq:thm:difference_norms_separate_ineq_'sharp';(ii)_sum_n} can be estimated from above by $C\norm{f}_{H^{s}_{p}(\R^{d},w;X)}$ for some constant $C$ independent of $f$ and $J$.
\end{proof}

The idea to do the estimate \eqref{eq:thm:difference_norms_separate_ineq_'sharp';(ii)_sum_n} and to treat the sum over $n \in \Z_{>0}$ and $n \in \Z_{\leq 0}$ separately is taken from the proof of \cite[Proposition~6]{S&S_jena-notes}, which is concerned with a difference norm characterization for $F^{s}_{p,q}(\R^{d};X)$.

\subsection{The Special Case of a Banach Function Space}\label{sec:BFS-case}

In the special case that $X$ is a Banach function space, we obtain the following corollary from the main result Theorem~\ref{thm:difference_norms_separate_ineq_'sharp'}:

\begin{cor}\label{cor:difference_norms_separate_ineq_'sharp';BFS_equiv_norm_differences}
Let $X$ be a UMD Banach function space, $s>0$, $p \in (1,\infty)$, $w \in A_{p}(\R^{d})$ and $m \in \N$, $m >s$.
Suppose that $K \in  \mathscr{K}(\R^{d}) \cap L^{1}(\R^{d},(1+|\,\cdot\,|)^{(d+3)m})$ satisfies the Tauberian condition \eqref{eq:thm:difference_norms_separate_ineq_'sharp';(i)_Tauberisch} for some $c,\epsilon > 0$. For all $f \in L^{p}(\R^{d},w;X)$ we then have the equivalence of extended norms
\begin{equation}\label{eq:cor:difference_norms_separate_ineq_'sharp';BFS_equiv_norm_differences}
\norm{f}_{H^{s}_{p}(\R^{d},w;X)} \eqsim \norm{f}_{L^{p}(\R^{d},w;X)} + \normB{ \Big(\,\sum_{j=1}^{\infty}|2^{js}K_{m}(2^{-j},f)|^{2}\,\Big)^{1/2} }_{L^{p}(\R^{d},w;X)}.
\end{equation}
\end{cor}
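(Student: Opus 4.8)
The plan is to deduce this from the main theorem, Theorem~\ref{thm:difference_norms_separate_ineq_'sharp'}, by replacing the Rademacher sums appearing in $\normm{\,\cdot\,}_{H^{s}_{p}(\R^{d},w;X)}^{(m,K)}$ with square functions via the Khintchine--Maurey inequality \eqref{eq:Khintchine_Maurey}.

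First I would check that the hypotheses of both parts of Theorem~\ref{thm:difference_norms_separate_ineq_'sharp'} are satisfied. The Tauberian condition \eqref{eq:thm:difference_norms_separate_ineq_'sharp';(i)_Tauberisch} is assumed, and the integrability condition for part~(i) is implied by the stronger assumption $K \in L^{1}(\R^{d},(1+|\,\cdot\,|)^{(d+3)m})$ because $(d+3)m \geq d+2$; thus part~(i) applies. For part~(ii), $m>s$ and the integrability condition hold by assumption, so the only point needing an argument is the $\mathcal{R}$-boundedness of $\{ f \mapsto K_{m}(2^{-j},f) : j \in \Z_{\geq 1}\}$ in $\mathcal{B}(L^{p}(\R^{d},w;X))$. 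Since $K_{m}(2^{-j},f) = K^{\Delta^{m}}_{2^{j}}*f + (-1)^{m}\hat{K}(0)f$ with $K^{\Delta^{m}} \in \mathscr{K}(\R^{d})$ (see Section~\ref{subsec:sec:difference_norms;notation}), it suffices to $\mathcal{R}$-bound the convolution operators $\{ f \mapsto K^{\Delta^{m}}_{2^{j}}*f : j \in \Z_{\geq 1}\}$. As $X$ is a UMD Banach function space, these are dominated (in the lattice sense) by the Banach-lattice Hardy--Littlewood maximal operator, which is bounded on $L^{p}(\R^{d},w;X)$, and since $\mathcal{R}$-boundedness coincides with $\ell^{2}$-boundedness in this setting, the desired $\mathcal{R}$-boundedness follows (this is a special case of Proposition~\ref{prop:R-bdd_convolutions_UMD_BFS}). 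Hence Theorem~\ref{thm:difference_norms_separate_ineq_'sharp'} gives $\norm{f}_{H^{s}_{p}(\R^{d},w;X)} \eqsim \norm{f}_{L^{p}(\R^{d},w;X)} + [f]_{H^{s}_{p}(\R^{d},w;X)}^{(m,K)}$ for all $f \in L^{p}(\R^{d},w;X)$.

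It then remains to rewrite $[f]_{H^{s}_{p}(\R^{d},w;X)}^{(m,K)}$ as the square-function expression. The space $E := L^{p}(\R^{d},w;X)$ is again a Banach function space (over the product of the weighted Euclidean measure space with the measure space underlying $X$) and it is UMD, hence of finite cotype. Applying \eqref{eq:Khintchine_Maurey} in $E$ with $x_{j} = 2^{js}K_{m}(2^{-j},f)$ yields, with implicit constants independent of $J \in \N$,
\[
\normB{\sum_{j=1}^{J}\varepsilon_{j}2^{js}K_{m}(2^{-j},f)}_{L^{p}(\Omega;E)} \eqsim \normB{\Big(\sum_{j=1}^{J}|2^{js}K_{m}(2^{-j},f)|^{2}\Big)^{1/2}}_{E}.
\]
Since the partial square functions increase pointwise a.e.\ to $\big(\sum_{j=1}^{\infty}|2^{js}K_{m}(2^{-j},f)|^{2}\big)^{1/2}$ and $E$ has the Fatou property, the right-hand norms increase to $\normB{\big(\sum_{j=1}^{\infty}|2^{js}K_{m}(2^{-j},f)|^{2}\big)^{1/2}}_{E}$; taking the supremum over $J$ on both sides then identifies $[f]_{H^{s}_{p}(\R^{d},w;X)}^{(m,K)}$ with the square-function expression up to constants. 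Combining this with the previous step gives \eqref{eq:cor:difference_norms_separate_ineq_'sharp';BFS_equiv_norm_differences}.

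I expect the only mildly delicate point to be the verification of the $\mathcal{R}$-boundedness hypothesis of Theorem~\ref{thm:difference_norms_separate_ineq_'sharp'}(ii), that is, the reduction to the boundedness of the Banach-lattice Hardy--Littlewood maximal operator on $L^{p}(\R^{d},w;X)$ together with the identification of $\mathcal{R}$-boundedness with $\ell^{2}$-boundedness; once Proposition~\ref{prop:R-bdd_convolutions_UMD_BFS} is available this is immediate, and everything else is a routine application of the cited results.
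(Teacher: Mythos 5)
Your proposal is correct and follows essentially the same route as the paper: verify the hypotheses of Theorem~\ref{thm:difference_norms_separate_ineq_'sharp'} (with the $\mathcal{R}$-boundedness in part~(ii) coming from Proposition~\ref{prop:R-bdd_convolutions_UMD_BFS} applied to $K^{\Delta^{m}}$, plus the constant-multiple-of-identity term), and then pass from Rademacher sums to square functions via Khintchine--Maurey \eqref{eq:Khintchine_Maurey}. The only difference is that you spell out the monotone-convergence/Fatou step for the supremum over $J$, which the paper leaves implicit.
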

\begin{proof}
By the Khintchine-Maurey theorem, the right-hand side (RHS) of \eqref{eq:cor:difference_norms_separate_ineq_'sharp';BFS_equiv_norm_differences} defines an extended norm on $L^{p}(\R^{d},w;X)$ which is equivalent to $\normm{\,\cdot\,}^{(m,K)}_{H^{s}_{p}(\R^{d},w;X)}$. Therefore, we only need to check the $\mathcal{R}$-boundedness condition in Theorem~\ref{thm:difference_norms_separate_ineq_'sharp'}(ii). But this follows from Proposition~\ref{prop:R-bdd_convolutions_UMD_BFS} below (and the discussion after it).
\end{proof}

\begin{remark}
Let $X$ be a UMD Banach function space, $s>0$, $p \in (1,\infty)$, $w \in A_{p}(\R^{d})$ and $m \in \N$, $m >s$.
Suppose $K \in \mathscr{K}(\R^{d})^{+} \setminus \{0\}$. Then it is a natural question whether we can replace $K_{m}(2^{-j},f)$ by
$d^{m}_{K}(2^{-j},f)$ in the RHS of \eqref{eq:cor:difference_norms_separate_ineq_'sharp';BFS_equiv_norm_differences}, where
\[
d^{m}_{K}(t,f)(x) := \int_{\R^{d}}K(h)|\Delta^{m}_{h}f(x)|\,dh, \quad\quad t > 0, x \in \R^{d}.
\]
In view of the domination $|K_{m}(t,f)| \leq d^{m}_{K}(t,f)$, this is certainly true for the inequality '$\lesssim$' in \eqref{eq:cor:difference_norms_separate_ineq_'sharp';BFS_equiv_norm_differences}. For the reverse inequality '$\lesssim$' one could try to extend the maximal function techniques from  \cite[Proposition~6]{S&S_jena-notes} to our setting via the square function variant of the Littlewood-Paley characterization~\eqref{eq:randomized-LP-decomp}; here one would have to replace the classical Hardy-Littlewood maximal function by the Banach lattice version from \cite{Bourgain_max,Garcia-Cuerva&Macias&Torrea_HL-property_Banach_Lattices,Rubio_de_Francia_max}.
\end{remark}

\begin{prop}\label{prop:R-bdd_convolutions_UMD_BFS}
Let $X$ be a UMD Banach function space, $p \in (1,\infty)$ and $w \in A_{p}(\R^{d})$.
Then $\mathscr{K}(\R^{d}) \hookrightarrow \mathcal{B}(L^{p}(\R^{d},w;X))$ maps bounded sets to $\mathcal{R}$-bounded sets.
\end{prop}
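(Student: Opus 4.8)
The plan is to reduce the statement to a weighted vector-valued Fefferman--Stein inequality. Write $Y := L^{p}(\R^{d},w;X)$. Since $X$ is a UMD space it has finite cotype, and hence $Y$ is a Banach function space with finite cotype (the lattice operations being taken pointwise). By the Khintchine--Maurey theorem~\eqref{eq:Khintchine_Maurey} applied in $Y$, a family $\mathcal{T} \subset \mathcal{B}(Y)$ is $\mathcal{R}$-bounded if and only if it is $\ell^{2}$-bounded, i.e.\ there is a constant $C$ with $\normB{(\sum_{i=1}^{N}|T_{i}f_{i}|^{2})^{1/2}}_{Y} \leq C\,\normB{(\sum_{i=1}^{N}|f_{i}|^{2})^{1/2}}_{Y}$ for all $N \in \N$, $T_{1},\ldots,T_{N} \in \mathcal{T}$ and $f_{1},\ldots,f_{N} \in Y$ (here $|\,\cdot\,|$ and the square function are taken in $Y$), with the two optimal constants comparable up to a constant depending only on the cotype of $Y$. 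So it suffices to prove such an $\ell^{2}$-estimate for the convolution operators $\{k* : k \in \mathscr{B}\}$ associated with an arbitrary bounded subset $\mathscr{B} \subset \mathscr{K}(\R^{d})$, say with $\norm{k}_{\mathscr{K}(\R^{d})} \leq C_{0}$ for all $k \in \mathscr{B}$.

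The key pointwise step is a lattice version of the scalar domination estimate recalled in Section~\ref{subsec:prelim:weights}. Realising $X$ as a Banach function space over a $\sigma$-finite measure space $(S,\mu)$, let $\bar{M}$ denote the lattice Hardy--Littlewood maximal operator, acting on locally integrable $X$-valued functions $g$ on $\R^{d}$ by the lattice supremum $\bar{M}g(x) := \sup_{Q \ni x}\fint_{Q}|g(y)|\,dy$ over cubes $Q$ (with, say, rational data, for measurability). First I would show that for $k \in \mathscr{B}$ and $f \in Y$ one has, in the order of $X$ (i.e.\ $\mu$-a.e.\ on $S$, for a.e.\ $x \in \R^{d}$), the estimate $|k*f(x)| \leq \int_{\R^{d}}|k(x-y)|\,|f(y)|\,dy \leq C_{0}\,\bar{M}f(x)$: the first inequality is the triangle inequality for Bochner integrals in a Banach lattice, and for the second one picks a radially decreasing majorant $\psi \in L^{1}(\R^{d})^{+}$ of $|k|$, approximates $\psi$ from below by an increasing sequence of finite positive linear combinations of indicators of balls centred at the origin, runs the standard scalar argument for $M$ with the triangle inequality replaced by the lattice order, and passes to the limit by monotone convergence in $X$; taking the infimum over such $\psi$ yields the constant $C_{0}$. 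Consequently, for $k_{1},\ldots,k_{N} \in \mathscr{B}$ and $f_{1},\ldots,f_{N} \in Y$ we obtain the pointwise bound $(\sum_{i=1}^{N}|k_{i}*f_{i}|^{2})^{1/2} \leq C_{0}\,(\sum_{i=1}^{N}|\bar{M}f_{i}|^{2})^{1/2}$ in $Y$, so the required $\ell^{2}$-estimate follows once we have the weighted vector-valued Fefferman--Stein inequality
\[
\normB{\Big(\sum_{i=1}^{N}|\bar{M}f_{i}|^{2}\Big)^{1/2}}_{Y} \lesssim \normB{\Big(\sum_{i=1}^{N}|f_{i}|^{2}\Big)^{1/2}}_{Y}, \quad f_{1},\ldots,f_{N} \in Y.
\]

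Finally, this Fefferman--Stein inequality is precisely the boundedness of $\bar{M}$ on $L^{p}(\R^{d},w;X(\ell^{2}))$, where $X(\ell^{2})$ is the (again UMD) Banach function space of $X$-valued sequences with $\norm{(g_{i})_{i}}_{X(\ell^{2})} = \norm{(\sum_{i}|g_{i}|^{2})^{1/2}}_{X}$, since one checks from the definition of the lattice supremum that $\bar{M}$ acts coordinatewise on $X(\ell^{2})$-valued functions. As $X(\ell^{2})$ is a UMD Banach function space and $w \in A_{p}(\R^{d})$, this boundedness is the weighted Banach-lattice Hardy--Littlewood maximal theorem; see \cite{Bourgain_max,Garcia-Cuerva&Macias&Torrea_HL-property_Banach_Lattices,Rubio_de_Francia_max}. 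Combining the three steps proves the proposition, with $\mathcal{R}$-bound $\lesssim_{X,p,d,w} C_{0}$. I expect this last ingredient --- the weighted Banach-lattice-valued Hardy--Littlewood maximal inequality --- to be the only non-routine input; the reduction via Khintchine--Maurey and the pointwise domination by $\bar{M}$ are both standard once it is available.
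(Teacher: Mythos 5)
Your proof is correct and takes essentially the same route as the paper: the paper's proof is a short reference-based argument (it cites \cite{vNeerven&Veraar&Weis} for the unweighted case and then observes that the weighted lattice Hardy--Littlewood maximal theorem from \cite{Garcia-Cuerva&Macias&Torrea_HL-property_Banach_Lattices} lets that argument go through for general $w\in A_p$), and your three steps---Khintchine--Maurey reduction to $\ell^2$-boundedness, pointwise lattice domination $|k*f|\leq\norm{k}_{\mathscr{K}}\,\bar{M}f$, and the weighted boundedness of $\bar{M}$ on $L^p(\R^d,w;X(\ell^2))$---are precisely the content of that referenced argument, unpacked.
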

\begin{proof}
In the unweighted case $w=1$ this can be found in \cite[Section~4]{vNeerven&Veraar&Weis}. However, the Banach lattice version of the Hardy-Littlewood maximal operator is bounded on $L^{p}(\R^{d},w;X(\ell^{2}))$ for general $w \in A_{p}$, which  which is implicitly contained \cite{Garcia-Cuerva&Macias&Torrea_HL-property_Banach_Lattices}; also see~\cite{Tozoni_vector-valued_extensions}. Hence, the results from \cite[Section~4]{vNeerven&Veraar&Weis} remain valid for general $w \in A_{p}$.
\end{proof}

Recall that, given $k \in \mathscr{K}(\R^{d})$, for all $t > 0$ we have $k_{t} = t^{d}k(t\,\cdot\,) \in \mathscr{K}(\R^{d})$ with $\norm{k_{t}}_{\mathscr{K}(\R^{d})} = \norm{k}_{\mathscr{K}(\R^{d})}$. So, under the assumptions of the above proposition,
\[
\mathcal{R}\{ f \mapsto k_{t}*f : t > 0 \} \lesssim_{X,p,d,w} \norm{k}_{\mathscr{K}(\R^{d})} \quad \mbox{in} \quad \mathcal{B}(L^{p}(\R^{d},w;X)).
\]
In particular, if $m \in \Z_{\geq 1}$ and $K \in \mathscr{K}(\R^{d})$, then the choice $k=K^{\Delta^{m}}$ leads to the $\mathcal{R}$-boundedness of
$\{ f \mapsto K_{m}(t,f) : t > 0 \}$ in $\mathcal{B}(L^{p}(\R^{d},w;X))$.

\section{$1_{\R^{d}_{+}}$ as Pointwise Multiplier}\label{sec:pointwise-multiplier}

\subsection{Proof of Theorem~\ref{thm:pointwise_multiplier}}

Besides Theorem~\ref{thm:intro_main_result;difference} (or Theorem~\ref{thm:difference_norms_separate_ineq_'sharp'}), we need two lemmas for the proof of Theorem~\ref{thm:pointwise_multiplier}.
The first lemma says that the inclusion \eqref{eq:thm:pointwise_multiplier;inclusion} automatically implies its vector-valued version.
\begin{lemma}\label{lemma:inclusion_scalar->vector-valued}
Let $s \geq 0$, $p \in (1,\infty)$ and $w \in A_{p}(\R^{d})$. Let $w_{s,p}$ be the weight from Theorem~\ref{thm:pointwise_multiplier}.
If $H^{s}_{p}(\R^{d},w) \hookrightarrow L^{p}(\R^{d},w_{s,p})$, then there also is the inclusion
\begin{equation}\label{eq:thm:pointwise_multiplier;inclusion;vector-valued_version}
H^{s}_{p}(\R^{d},w;X) \hookrightarrow L^{p}(\R^{d},w_{s,p};X)
\end{equation}
for any Banach space $X$.
\end{lemma}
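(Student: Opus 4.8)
The plan is to reduce the vector-valued inclusion to a scalar convolution inequality and then tensor it up by means of the triangle inequality for the Bochner integral. First, if $s=0$ then $w_{0,p}=w$ and the statement is trivial, so assume from now on $s>0$. Recall that $\mathcal{J}_{-s}f=G_{s}*f$, where $G_{s}:=\mathscr{F}^{-1}[(1+|\,\cdot\,|^{2})^{-s/2}]$ is the Bessel potential kernel, and that for $s>0$ the function $G_{s}$ is nonnegative, radially decreasing, and belongs to $L^{1}(\R^{d})$; in particular $G_{s}\in\mathscr{K}(\R^{d})$, so by the last part of Section~\ref{subsec:prelim:weights} convolution with $G_{s}$ is a bounded operator on $L^{p}(\R^{d},w;X)$ for every Banach space $X$.

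Next I reformulate the hypothesis. Since $\mathcal{J}_{s}\colon H^{s}_{p}(\R^{d},w)\to L^{p}(\R^{d},w)$ is an isometric isomorphism with inverse $\mathcal{J}_{-s}=G_{s}*(\,\cdot\,)$, writing a generic $\phi\in H^{s}_{p}(\R^{d},w)$ as $\phi=G_{s}*g$ with $g:=\mathcal{J}_{s}\phi\in L^{p}(\R^{d},w)$ shows that $H^{s}_{p}(\R^{d},w)\hookrightarrow L^{p}(\R^{d},w_{s,p})$ is equivalent to the scalar convolution estimate
\[
\norm{G_{s}*g}_{L^{p}(\R^{d},w_{s,p})}\;\lesssim\;\norm{g}_{L^{p}(\R^{d},w)},\qquad g\in L^{p}(\R^{d},w).
\]

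Now for the tensoring step. Let $X$ be any Banach space and $f\in H^{s}_{p}(\R^{d},w;X)$, and put $h:=\mathcal{J}_{s}f\in L^{p}(\R^{d},w;X)$, so that $f=\mathcal{J}_{-s}h=G_{s}*h$; in particular $f\in L^{p}(\R^{d},w;X)\subset L^{1}_{loc}(\R^{d};X)$, which legitimizes the manipulations below. Since $G_{s}\geq 0$, the triangle inequality for the Bochner integral gives, for a.e.\ $x\in\R^{d}$,
\[
\norm{f(x)}_{X}=\normB{\int_{\R^{d}}G_{s}(x-y)h(y)\,dy}_{X}\leq\int_{\R^{d}}G_{s}(x-y)\norm{h(y)}_{X}\,dy=\big(G_{s}*\norm{h(\,\cdot\,)}_{X}\big)(x),
\]
where $\norm{h(\,\cdot\,)}_{X}$ denotes the scalar function $x\mapsto\norm{h(x)}_{X}$, which lies in $L^{p}(\R^{d},w)$ with $\normb{\,\norm{h(\,\cdot\,)}_{X}\,}_{L^{p}(\R^{d},w)}=\norm{h}_{L^{p}(\R^{d},w;X)}$. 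Taking $L^{p}(\R^{d},w_{s,p})$-norms and applying the displayed scalar estimate to the scalar function $\norm{h(\,\cdot\,)}_{X}$ yields
\[
\norm{f}_{L^{p}(\R^{d},w_{s,p};X)}\leq\normb{G_{s}*\norm{h(\,\cdot\,)}_{X}}_{L^{p}(\R^{d},w_{s,p})}\lesssim\norm{h}_{L^{p}(\R^{d},w;X)}=\norm{f}_{H^{s}_{p}(\R^{d},w;X)},
\]
which is exactly the desired inclusion \eqref{eq:thm:pointwise_multiplier;inclusion;vector-valued_version}.

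I do not expect a serious obstacle here: the argument is simply the extension of a positive convolution operator from the scalar to the vector-valued setting. The only points that need (minor) care are the identification $\mathcal{J}_{-s}=G_{s}*(\,\cdot\,)$ with $G_{s}\geq 0$, the well-definedness (measurability and a.e.\ finiteness) of $G_{s}*\norm{h(\,\cdot\,)}_{X}$---guaranteed by $G_{s}\in L^{1}(\R^{d})$ together with $\norm{h(\,\cdot\,)}_{X}\in L^{p}(\R^{d},w)\hookrightarrow L^{1}_{loc}(\R^{d})$---and the trivial separate treatment of the case $s=0$.
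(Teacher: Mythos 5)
Your argument is correct, and it is exactly the approach the paper invokes by reference to \cite[Proof of Theorem~1.3]{Meyries&Veraar_char_class_embeddings}: the paper's one-line proof cites the positivity of $\mathcal{J}_{-s}$ (i.e.\ that $\mathcal{J}_{-s}$ is convolution with the nonnegative Bessel kernel $G_{s}$), and you have simply spelled out the standard tensorization of a positive operator to the vector-valued setting via the pointwise Bochner triangle inequality $\norm{G_{s}*h(x)}_{X}\leq (G_{s}*\norm{h(\cdot)}_{X})(x)$ followed by the scalar hypothesis.
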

\begin{proof}
This can be shown as in \cite[Proof of Theorem~1.3,pg.~8]{Meyries&Veraar_char_class_embeddings}, which is based on the fact that the Bessel potential operator $\mathcal{J}_{-s}$ ($s \geq 0$) is positive as an operator from $L^{p}(\R^{d},w)$ to $H^{s}_{p}(\R^{d},w)$ (in the sense that $\mathcal{J}_{-s}f \geq 0$ whenever $f \geq 0$).
\end{proof}

The second lemma is very similar to Theorem~\ref{thm:difference_norms_separate_ineq_'sharp'}(ii) and may be thought of as an $\R^{d}_{+}$-version for the case $m=1$.

\begin{lemma}\label{lemma:part_difference_norms_separate_ineq;lemma_pt-multiplier}
Let $X$ be a UMD Banach space, $s \in (0,1)$, $p \in (1,\infty)$ and $w \in A_{p}(\R^{d})$.
Let $K \in \mathscr{K}(\R^{d}) \cap L^{1}(\R^{d},(1+|\,\cdot\,|)^{d+3})$. For each $f \in L^{p}(\R^{d},w;X)$ we define
\[
[f]^{\#}_{H^{s}_{p}(\R^{d}_{+},w;X)} = [f]^{(K)}_{H^{s}_{p}(\R^{d}_{+},w;X)} := \sup_{J \in \N}\normB{ \sum_{j=-J}^{J}\varepsilon_{j}2^{js}K_{\R^{d}_{+}}(2^{-j},f)\,}_{X_{p,w}(\R^{d}_{+})},
\]
where we use the notation
\[
K_{\R^{d}_{+}}(t,f)(x) := \int_{\{h_{1} \geq -x_{1}t^{-1}\}}K(h)\Delta_{th}f(x)\,dh, \quad\quad t>0, x \in \R^{d}_{+}.
\]
If $\{ f \mapsto \tilde{K}_{t}*f : t=2^{-j}, j \in \Z_{\geq 1} \} \subset \mathcal{B}(L^{p}(\R^{d},w;X))$ is $\mathcal{R}$-bounded, then we have the estimate
\[
[f]^{\#}_{H^{s}_{p}(\R^{d}_{+},w;X)} \lesssim \norm{f}_{H^{s}_{p}(\R^{d},w;X)}, \quad\quad f \in L^{p}(\R^{d},w;X).
\]
\end{lemma}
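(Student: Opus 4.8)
The plan is to adapt the proof of Theorem~\ref{thm:difference_norms_separate_ineq_'sharp'}(ii), viewing this lemma as its $\R^{d}_{+}$-version for $m=1$. First I would split the Rademacher sum over $j\in\{-J,\dots,J\}$, which is legitimate by the triangle inequality in $L^{p}(\Omega;L^{p}(\R^{d}_{+},w;X))$: it then suffices to bound $\sup_{J}\normB{\sum_{j=-J}^{0}\varepsilon_{j}2^{js}K_{\R^{d}_{+}}(2^{-j},f)}_{X_{p,w}(\R^{d}_{+})}$ (the ``coarse'' part) and $\sup_{J}\normB{\sum_{j=1}^{J}\varepsilon_{j}2^{js}K_{\R^{d}_{+}}(2^{-j},f)}_{X_{p,w}(\R^{d}_{+})}$ (the ``fine'' part) separately by $\norm{f}_{H^{s}_{p}(\R^{d},w;X)}$.

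For the coarse part I would first record the pointwise identity, valid for $x\in\R^{d}_{+}$,
\[
K_{\R^{d}_{+}}(2^{-j},f)(x)=\big(\tilde{K}_{2^{j}}*(1_{\R^{d}_{+}}f)\big)(x)-\phi_{2^{-j}}(x)f(x),\qquad \phi_{t}(x):=\int_{\{h_{1}\geq -x_{1}/t\}}K(h)\,dh=\big(\tilde{K}_{1/t}*1_{\R^{d}_{+}}\big)(x),
\]
obtained by the substitution $y=x+2^{-j}h$, which turns the constraint $h_{1}\geq -x_{1}2^{j}$ into $y\in\R^{d}_{+}$; note $\norm{\phi_{t}}_{\infty}\leq\norm{K}_{L^{1}}$. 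Since $\{\tilde{K}_{2^{j}}*:j\leq-1\}=\{\tilde{K}_{2^{-j}}*:j\geq1\}$ is $\mathcal{R}$-bounded on $L^{p}(\R^{d},w;X)$ by hypothesis, and $\tilde{K}_{1}*$ is a single bounded convolution operator, composing with the contractive extension-by-zero and restriction maps makes $\{f\mapsto(\tilde{K}_{2^{j}}*(1_{\R^{d}_{+}}f))|_{\R^{d}_{+}}:j\leq0\}$ $\mathcal{R}$-bounded on $L^{p}(\R^{d}_{+},w;X)$; likewise $\{f\mapsto\phi_{2^{-j}}f:j\leq0\}$ is $\mathcal{R}$-bounded with bound $\lesssim\norm{K}_{L^{1}}$ by the pointwise contraction principle. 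Hence the operators $T_{j}:=K_{\R^{d}_{+}}(2^{-j},\,\cdot\,)$, $j\leq0$, form an $\mathcal{R}$-bounded family on $L^{p}(\R^{d}_{+},w;X)$, and since $s>0$ gives $\norm{\sum_{j=-J}^{0}\varepsilon_{j}2^{js}}_{L^{p}(\Omega)}\lesssim1$ we get $\normB{\sum_{j=-J}^{0}\varepsilon_{j}2^{js}K_{\R^{d}_{+}}(2^{-j},f)}_{X_{p,w}(\R^{d}_{+})}=\normB{\sum_{j=-J}^{0}\varepsilon_{j}T_{j}(2^{js}f)}\lesssim\norm{f}_{L^{p}(\R^{d},w;X)}\leq\norm{f}_{H^{s}_{p}(\R^{d},w;X)}$.

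For the fine part I would use the randomized Littlewood--Paley decomposition. Writing $f=\sum_{m\geq0}S_{m}f$, $f_{m}:=S_{m}f$ (and $f_{m}:=0$ for $m<0$), linearity and boundedness of $K_{\R^{d}_{+}}(2^{-j},\,\cdot\,)\colon L^{p}(\R^{d},w;X)\to L^{p}(\R^{d}_{+},w;X)$ (from the $\mathscr{K}$-majorant estimate and the boundedness of the Hardy--Littlewood maximal operator) give $K_{\R^{d}_{+}}(2^{-j},f)=\sum_{n\in\Z}K_{\R^{d}_{+}}(2^{-j},f_{n+j})$ for $j\geq1$, whence
\[
\Big\|\sum_{j=1}^{J}\varepsilon_{j}2^{js}K_{\R^{d}_{+}}(2^{-j},f)\Big\|_{X_{p,w}(\R^{d}_{+})}\leq\sum_{n\in\Z}\Big\|\sum_{j=1}^{J}\varepsilon_{j}2^{js}K_{\R^{d}_{+}}(2^{-j},f_{n+j})\Big\|_{X_{p,w}(\R^{d}_{+})}.
\]
For each $n$ I would split $K_{\R^{d}_{+}}(2^{-j},f_{n+j})=K_{1}(2^{-j},f_{n+j})-C_{2^{-j}}(f_{n+j})$ on $\R^{d}_{+}$, where $C_{t}(g)(x):=\int_{\{h_{1}<-x_{1}/t\}}K(h)\Delta_{th}g(x)\,dh$ is the part of the difference mean that crosses $\{x_{1}=0\}$. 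The contribution of the terms $K_{1}(2^{-j},f_{n+j})$ is estimated exactly as in the proof of Theorem~\ref{thm:difference_norms_separate_ineq_'sharp'}(ii) with $m=1$: for $n\leq0$, Lemma~\ref{lemma:fm_differences_with_support_cond} and the Fourier support of $f_{n+j}$ give $\lesssim 2^{n(1-s)}\norm{f}_{H^{s}_{p}}$ (summable since $s<1$), and for $n>0$ the $\mathcal{R}$-boundedness of the dilated convolution operators $\{\tilde{K}_{2^{j}}*:j\geq1\}$ — available under the running assumptions on $K$ in the applications of this lemma, cf.\ the discussion following Theorem~\ref{thm:difference_norms_separate_ineq_'sharp'} — gives $\lesssim 2^{-ns}\norm{f}_{H^{s}_{p}}$ (summable since $s>0$). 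For the correction $C_{2^{-j}}(f_{n+j})$ I would argue analogously, now carrying along the $x$-dependent truncation $\{h_{1}<-x_{1}2^{j}\}$: its exponential localization away from $\{x_{1}=0\}$ together with the Fourier-multiplier estimates for $\Delta_{2^{-j}h}f_{n+j}$ (a variant of Lemma~\ref{lemma:fm_differences_with_support_cond} adapted to the truncated integral) reproduces the same geometric decay in $n$.

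The delicate point will be the correction terms $C_{2^{-j}}(f_{n+j})$. One must resist splitting $C_{t}(g)(x)$ into $\int_{\{h_{1}<-x_{1}/t\}}K(h)g(x+th)\,dh$ and $-\psi_{t}(x)g(x)$ (with $\psi_{t}:=\hat{K}(0)-\phi_{t}$): with the weights $2^{js}$, each of those pieces, summed over $j\geq1$, is only controlled by $\norm{f}_{L^{p}(\R^{d}_{+},w_{s,p};X)}$, which is \emph{not} dominated by $\norm{f}_{H^{s}_{p}(\R^{d},w;X)}$ in general — this is precisely the content of the inclusion~\eqref{eq:thm:pointwise_multiplier;inclusion}, which is not assumed in the lemma. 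Hence the two pieces must be kept together so that their cancellation in the boundary layer $\{0<x_{1}\lesssim2^{-j}\}$ is preserved; keeping $C_{t}$ a genuine difference mean and drawing the smallness from the difference operator $\Delta_{th}$ itself is what closes the estimate, and it is here that the first-order structure ($m=1$) and the restriction $s<1$ are essential.
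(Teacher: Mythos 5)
Your skeleton matches the paper's: treat $j\leq0$ and $j\geq1$ separately, insert the randomized Littlewood--Paley decomposition, sum over shifts $n$, and distinguish $n>0$ (decay $2^{-ns}$ from $\mathcal{R}$-boundedness of the convolutions, summable by $s>0$) from $n\leq0$ (decay $2^{n(1-s)}$ from the smallness of the difference operator, summable by $s<1$). Two minor remarks before the main issue: for $j\leq0$ the paper needs only the \emph{uniform} bound $\norm{K_{\R^{d}_{+}}(t,\cdot)}_{\mathcal{B}(L^{p}(\R^{d},w;X))}\lesssim\norm{K}_{\mathscr{K}(\R^{d})}$ plus $\sum_{j\leq0}2^{js}<\infty$, not $\mathcal{R}$-boundedness; and the $\mathcal{R}$-bounded convolutions actually invoked for $j\geq1$, $n>0$ are $\{\tilde{K}_{2^{j}}*\}_{j\geq1}$, which you should read as the content of the lemma's hypothesis (the displayed $t=2^{-j}$ appears to be a slip) rather than import from ``running assumptions in the applications''.

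The substantive gap is your treatment of the $x$-dependent truncation. The split $K_{\R^{d}_{+}}(2^{-j},g)=K_{1}(2^{-j},g)-C_{2^{-j}}(g)$ is not in the paper, and it leaves you with no workable bound for $C_{2^{-j}}(f_{n+j})$: the constraint $\{h_{1}<-x_{1}2^{j}\}$ is $x$-dependent, so there is no Fourier-multiplier framework for a ``variant of Lemma~\ref{lemma:fm_differences_with_support_cond} adapted to the truncated integral'', and $K\in\mathscr{K}(\R^{d})\cap L^{1}((1+|\cdot|)^{d+3})$ has no exponential localization. The missing device is elementary: for $n\leq0$, keep $K_{\R^{d}_{+}}(2^{-j},f_{n+j})$ intact (no split), write it as $\int_{\R^{d}}1_{[-2^{-j}h_{1},\infty)}(x_{1})K(h)\Delta_{2^{-j}h}f_{n+j}(x)\,dh$, apply Minkowski's inequality in $h$, and then apply the \emph{pointwise} Kahane contraction principle~\eqref{eq:contraction_principle} with the scalar coefficients $1_{[-2^{-j}h_{1},\infty)}(x_{1})\in\{0,1\}$ (pointwise in $x$ and $h$) to drop the indicator. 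One is then reduced to $\int_{\R^{d}}|K(h)|\,\normb{\sum_{j}\varepsilon_{j}2^{js}\Delta_{2^{-j}h}f_{n+j}}_{X_{p,w}}\,dh$, precisely the quantity controlled by Lemma~\ref{lemma:fm_differences_with_support_cond} in the proof of Theorem~\ref{thm:difference_norms_separate_ineq_'sharp'}(ii); the truncation has vanished. For $n>0$ the correct move \emph{is} a split, namely $K_{\R^{d}_{+}}(2^{-j},f_{n+j})=\tilde{K}_{2^{j}}*(1_{\R^{d}_{+}}f_{n+j})-\phi_{2^{-j}}f_{n+j}$ with $\phi_{2^{-j}}(x)=\int_{\{h_{1}\geq-x_{1}2^{j}\}}K(h)\,dh$ uniformly bounded, followed by $\mathcal{R}$-boundedness for the first term and the contraction principle for the second; the decay $2^{-ns}$ comes from $2^{js}=2^{-ns}2^{(n+j)s}$ and~\eqref{eq:randomized-LP-decomp}, not from boundary-layer cancellation, so the quantity $\norm{f}_{L^{p}(w_{s,p})}$ in your cautionary paragraph never appears (the estimate is done at each fixed $n$ before summing). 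Your final paragraph is therefore half-right but points in the wrong direction and contradicts your earlier ``argue analogously'': the operative dichotomy is ``split for $n>0$, kill the indicator by pointwise contraction for $n\leq0$'', and a uniform $K_{1}/C_{t}$ recipe cannot deliver both halves.
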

\begin{proof}
Note that, for each $t>0$, $f \mapsto K_{\R^{d}_{+}}(t,f)$ is a well-defined bounded linear operator on $L^{p}(\R^{d},w;X)$ of norm $\lesssim_{p,d,w} \norm{K}_{\mathscr{K}(\R^{d})}$. Using that $s>0$, for $f \in L^{p}(\R^{d},w;X)$ we can thus estimate
\[
\normB{\sum_{j=-J}^{J}\varepsilon_{j}2^{js}K_{\R^{d}_{+}}(2^{-j},f)\,}_{X_{p,w}(\R^{d}_{+})} \lesssim \norm{f}_{L^{p}(\R^{d},w;X)} +
\normB{\sum_{j=1}^{J}\varepsilon_{j}2^{js}K_{\R^{d}_{+}}(2^{-j},f)\,}_{X_{p,w}(\R^{d}_{+})}.
\]
Now fix $f \in L^{p}(\R^{d},w;X)$ and write $f_{n}:=S_{n}f$ for $n \in \N$ and $f_{n}:=0$ for $n \in \Z_{<0}$.
Then
\begin{equation}\label{eq:lemma:part_difference_norms_separate_ineq;lemma_pt-multiplier;(ii)_sum_n}
\normB{\sum_{j=1}^{J}\varepsilon_{j}2^{js}K_{\R^{d}_{+}}(2^{-j},f)\,}_{X_{p,w}(\R^{d}_{+})}
\leq \sum_{n\in\Z}\normB{\sum_{j=1}^{J}\varepsilon_{j}2^{js}K_{\R^{d}_{+}}(2^{-j},f_{n+j})}_{X_{p,w}}
\end{equation}

We first estimate the sum over $n \in \Z_{>0}$ in \eqref{eq:lemma:part_difference_norms_separate_ineq;lemma_pt-multiplier;(ii)_sum_n}.
Since
\[
K_{\R^{d}_{+}}(2^{-j},f_{n+j})(x)
= \tilde{K}_{2^{-j}}*(1_{\R^{d}_{+}}f)\,(x) + \left(\int_{\{h_{1} \geq -x_{1}2^{j}\}}K(h)\,dh \right)\,f_{n+j}(x),
\]
we can estimate
\begin{eqnarray*}
\normB{ \sum_{j=1}^{J}\varepsilon_{j}2^{js}K_{\R^{d}_{+}}(2^{-j},f_{n+j})\,}_{X_{p,w}}
&\leq& \normB{ \sum_{j=1}^{J}\varepsilon_{j}2^{js}\tilde{K}_{2^{-j}}*(1_{\R^{d}_{+}}f_{n+j}) }_{X_{p,w}} \\
&& \:+\:\: \normB{ x \mapsto \sum_{j=1}^{J}\varepsilon_{j}2^{js}\left(\int_{\{h_{1} \geq -x_{1}2^{j}\}}K(h)\,dh \right)\,f_{n+j}(x)}_{X_{p,w}}.
\end{eqnarray*}
For the first term we can use the assumed $\mathcal{R}$-boundedness of the involved convolution operators and for the second term we can use the contraction principle, to obtain
\begin{eqnarray*}
\normB{ \sum_{j=1}^{J}\varepsilon_{j}2^{js}K_{\R^{d}_{+}}(2^{-j},f_{n+j})\,}_{X_{p,w}}
&\lesssim& \normB{ \sum_{j=1}^{J}\varepsilon_{j}2^{js}1_{\R^{d}_{+}}f_{n+j} }_{L^{p}(\Omega;L^{p}(\R^{d},w;X))}
 + \normB{ \sum_{j=1}^{J}\varepsilon_{j}2^{js}f_{n+j}}_{X_{p,w}} \\
&\leq&  2\,2^{-ns} \norm{ \sum_{j=1}^{J}\varepsilon_{j}2^{(n+j)s}f_{n+j}}_{X_{p,w}} \\
&\lesssim& 2^{-ns}\norm{f}_{H^{s}_{p}(\R^{d},w;X)}.
\end{eqnarray*}
Since $s>0$, it follows that the sum over $n \in \Z_{>0}$ in \eqref{eq:lemma:part_difference_norms_separate_ineq;lemma_pt-multiplier;(ii)_sum_n} can be estimated from above by $C\norm{f}_{H^{s}_{p}(\R^{d},w;X)}$ for some constant $C$ independent of $f$ and $J$.

We next estimate the sum over $n \in \Z_{\leq 0}$ in \eqref{eq:lemma:part_difference_norms_separate_ineq;lemma_pt-multiplier;(ii)_sum_n}.
For each $n \leq 0$ we have
\begin{eqnarray*}
\normB{\sum_{j=1}^{J}\varepsilon_{j}2^{js}K_{\R^{d}_{+}}(2^{-j},f_{n+j})}_{X_{p,w}}
&=& \normB{ x \mapsto \sum_{j=1}^{J}\varepsilon_{j}2^{js}\int_{\R^{d}}1_{[-2^{-j}h_{1},\infty) }(x_{1})K(h)\Delta_{2^{-j}h}f_{n+j}(x)\,dh}_{X_{p,w}} \\
&\leq& \int_{\R^{d}}|K(h)|\,\normB{ x \mapsto \sum_{j=1}^{J}\varepsilon_{j}2^{js}1_{[-2^{-j}h_{1},\infty) }(x_{1})\Delta_{2^{-j}h}f_{n+j}(x)}_{X_{p,w}}\,dh \\
&\leq& \int_{\R^{d}}|K(h)|\,\normB{ x \mapsto \sum_{j=1}^{J}\varepsilon_{j}2^{js}\Delta_{2^{-j}h}f_{n+j}(x)}_{X_{p,w}}\,dh,
\end{eqnarray*}
where we used the contraction principle \eqref{eq:contraction_principle} in the last step. We can now proceed as in the proof of Theorem~\ref{thm:difference_norms_separate_ineq_'sharp'}(ii) to estimate the sum over $n \in \Z_{\leq 0}$ in \eqref{eq:lemma:part_difference_norms_separate_ineq;lemma_pt-multiplier;(ii)_sum_n} by $C\norm{f}_{H^{s}_{p}(\R^{d},w;X)}$ for some constant $C$ independent of $f$ and $J$.
\end{proof}

\begin{proof}[Proof of Theorem~\ref{thm:pointwise_multiplier}]
In view of Lemma~\ref{lemma:inclusion_scalar->vector-valued}, we need to show that $1_{\R^{d}_{+}}$ is a pointwise multiplier on $H^{s}_{p}(\R^{d},w;X)$ if and only if there is the continuous inclusion \eqref{eq:thm:pointwise_multiplier;inclusion;vector-valued_version}.
Defining $\bar{w}_{s,p}$ as the weight on $\R \times \R^{d-1}$ given by $\bar{w}_{s,p}(x_{1},x') := |x_{1}|^{-sp}w(x_{1},x')$,
the inclusion \eqref{eq:thm:pointwise_multiplier;inclusion;vector-valued_version} is equivalent to the inclusion
\begin{equation}\label{eq:thm:pointwise_multiplier;inclusion;vector-valued_version;modified_weight}
H^{s}_{p}(\R^{d},w;X) \hookrightarrow L^{p}(\R^{d},\bar{w}_{s,p};X)
\end{equation}
because $H^{s}_{p}(\R^{d},w;X) \hookrightarrow L^{p}(\R^{d},w;X)$.
So we must show that $1_{\R^{d}_{+}}$ is a pointwise multiplier on $H^{s}_{p}(\R^{d},w;X)$ if and only if there is the continuous inclusion \eqref{eq:thm:pointwise_multiplier;inclusion;vector-valued_version;modified_weight}.

\textbf{Step I.} \emph{Let $K \in \mathcal{S}(\R^{d})$ satisfy $\hat{K}(0) \neq 0$.
For a function $g$ on $\R^{d}$ we write $g^{\varrho}$ for the reflection in the hyperplane $\{0\} \times \R^{d-1}$, i.e. $g^{\varrho}(x):=g(-x)$.
Then $1_{\R^{d}_{+}}$ is a pointwise multiplier on $H^{s}_{p}(\R^{d},w;X)$ if and only if
\begin{equation}\label{eq:bewijs_pt-multiplier;StepI}
\normB{ x \mapsto \Big( \sum_{j \in \Z}\Big| 2^{js}\int_{\{h_{1} \leq -x_{1}2^{j}\}}k(h)\,dh \Big|^{2} \Big)^{1/2}\norm{f(x)}_{X} }_{L^{p}(\R^{d}_{+},v)} \lesssim \norm{f}_{H^{s}_{p}(\R^{d},v;X)}
\end{equation}
for $f \in L^{p}(\R^{d},v;X)$, $v \in \{w,w^{\varrho}\}$, $k \in \{K,K^{\varrho}\}$.}

\textbf{Step I.(a)} \emph{$1_{\R^{d}_{+}}$ is a pointwise multiplier on $H^{s}_{p}(\R^{d},w;X)$ if and only if
\begin{equation}\label{eq:bewijs_pt-multiplier;StepI.(a)}
[1_{\R^{d}_{\pm}}f]_{H^{s}_{p}(\R^{d}_{\pm},w;X)}  \lesssim \norm{f}_{H^{s}_{p}(\R^{d},w;X)}, \quad\quad f \in L^{p}(\R^{d},w;X),
\end{equation}
where
\[
[f]_{H^{s}_{p}(\R^{d}_{\pm},w;X)} := \sup_{J \in \N}\normB{\sum_{j=-J}^{J}\varepsilon_{j}2^{js}K_{1}(2^{-j},f)}_{L^{p}(\Omega;L^{p}(\R^{d}_{\pm},w;X))}.\footnote{Recall from Section~\ref{subsec:sec:difference_norms;notation} that $K_{1}(t,f)(x) = \int_{\R^{d}}K(h)\Delta_{th}f(x)\,dh$.}
\]
}
Since $[g]^{(1,K);\Z}_{H^{s}_{p}(\R^{d},w;X)} = \big([g]_{H^{s}_{p}(\R^{d}_{-},w;X)}^{p} + [g]_{H^{s}_{p}(\R^{d}_{+},w;X)}^{p}\big)^{1/p} \eqsim [g]_{H^{s}_{p}(\R^{d}_{-},w;X)} + [g]_{H^{s}_{p}(\R^{d}_{+},w;X)}$ for $g \in L^{p}(\R^{d},w;X)$, it follows from Theorem~\ref{thm:intro_main_result;difference}
(and Remark~\ref{rmk:thm:difference_norms_separate_ineq_'sharp';rmk_sum_integers}) that
\begin{equation}\label{eq:bewijs_pt-multiplier;StepI.(a);equiv_norms}
\norm{g}_{H^{s}_{p}(\R^{d},w;X)}
\eqsim \norm{g}_{L^{p}(\R^{d},w;X)} + [g]_{H^{s}_{p}(\R^{d}_{-},w;X)} + [g]_{H^{s}_{p}(\R^{d}_{+},w;X)}, \quad\quad g \in L^{p}(\R^{d},w;X).
\end{equation}

First we assume that \eqref{eq:bewijs_pt-multiplier;StepI.(a)} holds true. For all $f \in L^{p}(\R^{d},w;X)$ we can then estimate
\begin{eqnarray*}
\norm{1_{\R^{d}_{+}}f}_{H^{s}_{p}(\R^{d},w;X)}
&\stackrel{\eqref{eq:bewijs_pt-multiplier;StepI.(a);equiv_norms}}{\lesssim}&
\norm{1_{\R^{d}_{+}}f}_{L^{p}(\R^{d},w;X)} + [1_{\R^{d}_{+}}f]_{\R^{d}_{-}} + [1_{\R^{d}_{+}}f]_{\R^{d}_{+}} \\
&\leq& \norm{f}_{L^{p}(\R^{d},w;X)} + [f]_{_{\R^{d}_{-}}} + [1_{\R^{d}_{-}}f]_{\R^{d}_{-}} + [1_{\R^{d}_{+}}f]_{\R^{d}_{+}} \\
&\stackrel{\eqref{eq:bewijs_pt-multiplier;StepI.(a)},\eqref{eq:bewijs_pt-multiplier;StepI.(a);equiv_norms}}{\lesssim}& \norm{f}_{_{H^{s}_{p}(\R^{d},w;X)} }.
\end{eqnarray*}

Next we assume that $1_{\R^{d}_{+}}$ is a pointwise multiplier on $H^{s}_{p}(\R^{d},w;X)$. Then the inequality in \eqref{eq:bewijs_pt-multiplier;StepI.(a)} for $\R^{d}_{+}$ follows directly from \eqref{eq:bewijs_pt-multiplier;StepI.(a);equiv_norms}. Since $1_{\R^{d}_{-}} = 1 - 1_{\R^{d}_{+}}$, the inequality in \eqref{eq:bewijs_pt-multiplier;StepI.(a)} for $\R^{d}_{-}$ follows as well.

\textbf{Step I.(b)} \emph{\eqref{eq:bewijs_pt-multiplier;StepI} $\lra$ \eqref{eq:bewijs_pt-multiplier;StepI.(a)}.}
We only show that the inequality in \eqref{eq:bewijs_pt-multiplier;StepI.(a)} for $\R^{d}_{+}$ is equivalent to the inequality in \eqref{eq:bewijs_pt-multiplier;StepI} with $v=w$ and $k=K$, the equivalence of the other inequalities being completely similar.
We claim that the inequality in \eqref{eq:bewijs_pt-multiplier;StepI.(a)} for $\R^{d}_{+}$ is equivalent to the estimate
\begin{equation}\label{eq:bewijs_pt-multiplier;StepI(b);equiv_estimate}
\sup_{J \in \N}\normB{x \mapsto \sum_{j=-J}^{J}\varepsilon_{j}2^{js}\int_{h_{1} \leq -x_{1}2^{j}}K(h)\,dh\,f(x)}_{X_{p,w}(\R^{d}_{+})} \lesssim \norm{f}_{H^{s}_{p}(\R^{d},w;X)}, \quad\quad f \in L^{p}(\R^{d},w;X).
\end{equation}

Let us prove the claim. Note that, in view of the identity
\begin{equation*}\label{eq:bewijs_pt-multiplier;StepI.(b);simple_identity}
K_{1}(2^{-j},1_{\R^{d}_{+}}f)(x) = K_{\R^{d}_{+}}(2^{-j},f)(x)
+ \int_{\{h_{1} \leq -x_{1}2^{j}\}}K(h)\,dh\,f(x),
\end{equation*}
we have the inequalities
\begin{equation}\label{eq:bewijs_pt-multiplier;StepI.(b);simple_ineq1}
[1_{\R^{d}_{+}}f]_{H^{s}_{p}(\R^{d}_{+},w;X)} \leq [f]^{\#}_{H^{s}_{p}(\R^{d}_{+},w;X)} +
\sup_{J \in \N}\normB{x \mapsto \sum_{j=-J}^{J}\varepsilon_{j}2^{js}\int_{\{h_{1} \leq -x_{1}2^{j}\}}K(h)\,dh\,f(x)}_{X_{p,w}(\R^{d}_{+})}
\end{equation}
and
\begin{equation}\label{eq:bewijs_pt-multiplier;StepI.(b);simple_ineq2}
\sup_{J \in \N}\normB{x \mapsto \sum_{j=-J}^{J}\varepsilon_{j}2^{js}\int_{\{h_{1} \leq -x_{1}2^{j}\}}K(h)\,dh\,f(x)}_{X_{p,w}(\R^{d}_{+})}
\leq [1_{\R^{d}_{+}}f]_{H^{s}_{p}(\R^{d}_{+},w;X)} + [f]^{\#}_{H^{s}_{p}(\R^{d}_{+},w;X)}.
\end{equation}
Furthermore, note that the $\mathcal{R}$-boundedness condition from Lemma~\ref{lemma:part_difference_norms_separate_ineq;lemma_pt-multiplier} is fulfilled since $K \in \mathcal{S}(\R^{d})$; see Example~\ref{ex:thm:difference_norms_separate_ineq_'sharp';examples_cond_rho}.
Plugging the estimate from Lemma~\ref{lemma:part_difference_norms_separate_ineq;lemma_pt-multiplier} into \eqref{eq:bewijs_pt-multiplier;StepI.(b);simple_ineq1}, we see that \eqref{eq:bewijs_pt-multiplier;StepI(b);equiv_estimate} implies the inequality in \eqref{eq:bewijs_pt-multiplier;StepI.(a)} for $\R^{d}_{+}$. The reverse implication is obtained by
plugging the estimate from Lemma~\ref{lemma:part_difference_norms_separate_ineq;lemma_pt-multiplier} into \eqref{eq:bewijs_pt-multiplier;StepI.(b);simple_ineq2}.

Using the claim, this step is now completed by the observation that
\begin{eqnarray*}
&& \normB{x \mapsto \sum_{j=-J}^{J}\varepsilon_{j}2^{js}\int_{\{h_{1} \leq -x_{1}2^{j}\}}K(h)\,dh\,f(x)}_{X_{p,w}(\R^{d}_{+})} \\
&& \quad\quad\quad\quad = \: \normB{x \mapsto \normB{\sum_{j=-J}^{J}\varepsilon_{j}2^{js}\int_{\{h_{1} \leq -x_{1}2^{j}\}}K(h)\,dh}_{L^{p}(\Omega)}\norm{f(x)}_{X} }_{L^{p}(\R^{d}_{+},w)} \\
&& \quad\quad\quad\quad = \: \normB{ x \mapsto \Big( \sum_{j \in -J}^{J}\Big| 2^{js}\int_{\{h_{1} \leq -x_{1}2^{j}\}}K(h)\,dh \Big|^{2} \Big)^{1/2}\norm{f(x)}_{X} }_{L^{p}(\R^{d}_{+},w)}.
\end{eqnarray*}

\textbf{Step II.} \emph{Let $K = K^{[1]} \otimes K^{[2]} \in C^{\infty}_{c}(\R^{d})$, where $K^{[1]} \in C^{\infty}_{c}(\R)$ and $K^{[2]} \in  C^{\infty}_{c}(\R^{d-1})$ satisfy $K^{[1]} = K^{[1]}(-\,\cdot\,)$, $1_{[-1,1]} \leq K^{[1]} \leq 1_{[-2,2]}$ and $\widehat{K^{[2]}}(0)=1$. Then \eqref{eq:bewijs_pt-multiplier;StepI} is equivalent to \eqref{eq:thm:pointwise_multiplier;inclusion;vector-valued_version;modified_weight}.}
In view of the reflection symmetry $K=K^{\varrho}$, we only need to show that
\begin{equation}\label{eq:bewijs_pt-multiplier;StepII}
\Big( \sum_{j \in \Z}\Big| 2^{js}\int_{\{h_{1} \leq -y2^{j}\}}K(h)\,dh \Big|^{2} \Big)^{1/2} \eqsim y^{-s}, \quad\quad y \in \R_{+}.
\end{equation}
By the choice of $K$,
\[
\big| [-(1 \wedge y2^{j}),-y2^{j}] \big| \:\leq\: \int_{h_{1} \leq -y2^{j}}K(h)\,dh \:\leq\: \big| [-(2 \wedge y2^{j}),-y2^{j}] \big|, \quad\quad y \in \R_{+}.
\]
For every $b>0$ we have
\begin{eqnarray*}
\Big( \sum_{j \in \Z}\Big[ 2^{js}\big| [-(b \wedge y2^{j}),-y2^{j}] \big| \Big]^{2} \Big)^{1/2}
&\eqsim&  \Big( \int_{0}^{\infty} t^{-2s}\big| [-(b \wedge yt^{-1}),-yt^{-1}] \big|^{2} \frac{dt}{t} \Big)^{1/2} \\
&=& \Big( \int_{b^{-1}y}^{\infty} t^{-2s}(b-yt^{-1})^{2} \,\frac{dt}{t} \Big)^{1/2} \\
&=& b^{-(s+1)}y^{-s}\; \underbrace{\Big( \int_{1}^{\infty} \tau^{-2s-2}(\tau-1)^{2} \,\frac{d\tau}{\tau} \Big)^{1/2}}_{< \infty}.
\end{eqnarray*}
So we obtain \eqref{eq:bewijs_pt-multiplier;StepII} by taking $b=1,2$.
\end{proof}

\subsection{A Closer Look at the Inclusion $H^{s}_{p}(\R^{d},w) \hookrightarrow L^{p}(\R^{d},w_{s,p})$}\label{subsec:closer_look_inclusion}

In this section we give explicit conditions, in terms of $w$, $s$ and $p$, for which there is the continuous inclusion \eqref{eq:thm:pointwise_multiplier;inclusion} from Theorem~\ref{thm:pointwise_multiplier}. These conditions will be obtained from the following embedding result.

\begin{thm}(\cite[Theorem~1.2]{Meyries&Veraar_char_class_embeddings})\label{prop:Sobolev_emd_F-spaces_MV}
Let $w_{0},w_{1} \in A_{\infty}(\R^{d})$, $s_{0} > s_{1}$, $0 < p_{0} \leq p_{1} < \infty$, and $q_{0},q_{1} \in (0,\infty]$. Then there is the continuous inclusion
\begin{equation*}\label{eq:prop:Sobolev_emd_F-spaces_MV;inclusion}
F^{s_{0}}_{p_{0},q_{0}}(\R^{d},w_{0}) \hookrightarrow F^{s_{1}}_{p_{1},q_{1}}(\R^{d},w_{1})
\end{equation*}
if and only if
\begin{equation*}\label{eq:prop:Sobolev_emd_F-spaces_MV;C}
\sup_{\nu \in \N, m \in \Z^{d}}2^{-\nu(s_{0}-s_{1})}w_{0}(Q_{\nu,m})^{-1/p_{0}}w_{1}(Q_{\nu,m})^{1/p_{1}} < \infty,
\end{equation*}
where $Q_{\nu,m} = Q[2^{-\nu}m,2^{-\nu-1}] \subset \R^{d}$ denotes for $\nu \in \N$ and $m \in \Z^{d}$ the $d$-dimensional cube with sides parallel to the coordinate axes, centered at $2^{-\nu}m$ and with side length $2^{-\nu}$.
\end{thm}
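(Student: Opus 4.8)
Since the theorem is quoted from \cite{Meyries&Veraar_char_class_embeddings}, let me outline the natural route. The plan is to pass through the atomic (equivalently wavelet) decomposition of weighted Triebel--Lizorkin spaces, thereby reducing the embedding to an embedding between the associated sequence spaces. Recall that for $w\in A_{\infty}(\R^{d})$, $s\in\R$, $0<p<\infty$ and $0<q\leq\infty$ one has $\norm{f}_{F^{s}_{p,q}(\R^{d},w)}\eqsim\inf\norm{\lambda}_{f^{s}_{p,q}(w)}$, the infimum running over all decompositions $f=\sum_{\nu\in\N,m\in\Z^{d}}\lambda_{\nu,m}a_{\nu,m}$ into $L^{\infty}$-normalized smooth atoms $a_{\nu,m}$ supported in a fixed dilate of $Q_{\nu,m}$ (with sufficiently many vanishing moments when $p<1$), where
\[
\norm{\lambda}_{f^{s}_{p,q}(w)}:=\normB{\Big(\sum_{\nu,m}\big(2^{\nu s}|\lambda_{\nu,m}|1_{Q_{\nu,m}}\big)^{q}\Big)^{1/q}}_{L^{p}(\R^{d},w)}.
\]
Because $s_{0}>s_{1}$, atoms produced for the source space automatically meet the (weaker) requirements for the target space, so it suffices to show that condition $(C)$ is equivalent to the sequence-space inclusion $f^{s_{0}}_{p_{0},q_{0}}(w_{0})\hookrightarrow f^{s_{1}}_{p_{1},q_{1}}(w_{1})$.

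For the \emph{necessity} of $(C)$ I would test the assumed embedding on a single atom: fixing $\nu,m$ and taking $f=a_{\nu,m}$, the one-atom decomposition gives $\norm{f}_{F^{s_{0}}_{p_{0},q_{0}}(w_{0})}\lesssim 2^{\nu s_{0}}w_{0}(Q_{\nu,m})^{1/p_{0}}$, while projecting onto the Littlewood--Paley block at frequency $\sim 2^{\nu}$ (choosing the profile of $a_{\nu,m}$ so that its Fourier transform does not vanish near frequency $1$) yields the lower bound $\norm{f}_{F^{s_{1}}_{p_{1},q_{1}}(w_{1})}\gtrsim 2^{\nu s_{1}}w_{1}(Q_{\nu,m})^{1/p_{1}}$; here the $A_{\infty}$ (hence doubling, and reverse-doubling) property of the weights is used to compare a cube with a fixed dilate and with a comparable sub-ball. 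Combining the two bounds gives $2^{\nu s_{1}}w_{1}(Q_{\nu,m})^{1/p_{1}}\lesssim 2^{\nu s_{0}}w_{0}(Q_{\nu,m})^{1/p_{0}}$ uniformly in $\nu,m$, which is exactly $(C)$.

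For the \emph{sufficiency} I would first collapse the inner $\ell^{q}$-sum over $m$: for fixed $\nu$ the cubes $\{Q_{\nu,m}\}_{m}$ are pairwise disjoint and tile $\R^{d}$, so $\big(\sum_{m}(2^{\nu s}|\lambda_{\nu,m}|1_{Q_{\nu,m}}(x))^{q}\big)^{1/q}=2^{\nu s}|\lambda_{\nu,m_{\nu}(x)}|$, with $m_{\nu}(x)$ the unique index satisfying $x\in Q_{\nu,m_{\nu}(x)}$. An elementary scalar inequality, using the smoothness gap $s_{0}>s_{1}$, then gives $\big(\sum_{\nu}(2^{\nu s_{1}}b_{\nu})^{q_{1}}\big)^{1/q_{1}}\lesssim\big(\sum_{\nu}(2^{\nu s_{0}}b_{\nu})^{q_{0}}\big)^{1/q_{0}}$ for every non-negative sequence $(b_{\nu})_{\nu\geq 0}$ and all $q_{0},q_{1}\in(0,\infty]$, which reduces the problem to comparing the $L^{p_{0}}(w_{0})$- and $L^{p_{1}}(w_{1})$-norms of expressions built from the coefficients. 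This last inequality is the genuine Sobolev-type content: I would prove it by a dyadic organization of the sum over $(\nu,m)$, applying H\"older's inequality together with $(C)$ on each cube $Q_{\nu,m}$ to trade the weight $w_{1}$ for $w_{0}$ at the cost of the gain $2^{-\nu(s_{0}-s_{1})}$, and using $A_{\infty}$-doubling to absorb overlaps; when $q_{1}\leq p_{1}$ Minkowski's integral inequality handles the bookkeeping directly, and in general an auxiliary (weighted Hardy--Littlewood) maximal estimate takes care of the passage $p_{0}\leq p_{1}$.

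The main obstacle is precisely this last estimate: raising the integrability from $p_{0}$ to $p_{1}$ while simultaneously changing the weight from $w_{0}$ to $w_{1}$ is where the multiscale structure must genuinely be used, and it is here that the exact shape of $(C)$ — a uniform bound on the cube functional $2^{-\nu(s_{0}-s_{1})}w_{0}(Q_{\nu,m})^{-1/p_{0}}w_{1}(Q_{\nu,m})^{1/p_{1}}$ — is both necessary and sufficient. Everything else (the weighted atomic decomposition and the necessity direction) is routine once the weighted atomic machinery and the doubling properties of $A_{\infty}$-weights are in hand.
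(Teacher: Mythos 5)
The statement you are trying to prove is not proved in this paper at all: it is stated verbatim as a citation of Theorem~1.2 of Meyries and Veraar, \cite{Meyries&Veraar_char_class_embeddings}, and is used as an imported black box in the proof of Proposition~\ref{prop:inclusie_expliciete_conditie}. There is therefore no in-paper proof to compare your sketch against, and the correct thing to do here is simply to invoke the cited reference rather than reprove it.

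For what it is worth, your outline (weighted atomic/wavelet decomposition reducing the embedding to an $f^{s}_{p,q}$-sequence-space embedding; a single-atom test for necessity; a dyadic/H\"older argument driven by the smoothness gap $s_{0}>s_{1}$ for sufficiency) is the natural route and is broadly in the spirit of the cited work. But be aware that it is genuinely a sketch with real work left in it: the weighted atomic decomposition for $F^{s}_{p,q}(\R^{d},w)$ with $w\in A_{\infty}$, uniformity of the necessity test over all scales and positions, and especially the passage from $L^{p_{0}}(w_{0})$ to $L^{p_{1}}(w_{1})$ under condition $(C)$ all require careful treatment of $A_{\infty}$-doubling and of the case distinctions in the integrability parameters. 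Since none of this is needed for the paper under review, you should not attempt to fill those gaps here; cite \cite{Meyries&Veraar_char_class_embeddings} and move on, as the author does.
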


\begin{prop}\label{prop:inclusie_expliciete_conditie}
Let $s>0$, $p \in (1,\infty)$ and $w \in A_{p}(\R^{d})$.
Suppose that $w_{s,p}(x_{1},x') = |x_{1}|^{-sp}w(x_{1},x')$ defines an $A_{\infty}$-weight on $\R^{d}=\R \times \R^{d-1}$. If
\begin{equation}\label{eq:prop:Sobolev_emd_F-spaces_MV;C;specific_situation;simplified}
\sup_{\nu \in \N,m \in \{0\}\times\Z^{d-1}}2^{-\nu sp}\frac{1}{w(Q_{\nu,m})}\int_{Q_{\nu,m}}|x_{1}|^{-sp}\,w(x)\,dx < \infty,
\end{equation}
then there is the continuous inclusion $H^{s}_{p}(\R^{d},w) \hookrightarrow L^{p}(\R^{d},w_{s,p})$.
In case that $w_{s,p} \in A_{p}$, the converse holds true as well.
\end{prop}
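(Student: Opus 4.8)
The plan is to realize both spaces in the inclusion $H^{s}_{p}(\R^{d},w) \hookrightarrow L^{p}(\R^{d},w_{s,p})$ as (weighted) Triebel--Lizorkin spaces and then to invoke the embedding Theorem~\ref{prop:Sobolev_emd_F-spaces_MV}. By \eqref{eq:identity_Bessel-Potential_Triebel-Lizorkin;scalar-valued} we have $H^{s}_{p}(\R^{d},w) = F^{s}_{p,2}(\R^{d},w)$ (since $w \in A_{p}$), while for the target space I would use the identity $L^{p}(\R^{d},w_{s,p}) = H^{0}_{p}(\R^{d},w_{s,p}) = F^{0}_{p,2}(\R^{d},w_{s,p})$ when $w_{s,p} \in A_{p}$, and the weaker inclusion $F^{0}_{p,1}(\R^{d},w_{s,p}) \hookrightarrow L^{p}(\R^{d},w_{s,p})$, coming from \eqref{eq:triebel_into_Lp;A_oneindig}, when only $w_{s,p} \in A_{\infty}$ is available. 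After raising it to the power $p$ and using $w_{s,p}(Q_{\nu,m}) = \int_{Q_{\nu,m}}|x_{1}|^{-sp}\,w(x)\,dx$, the condition of Theorem~\ref{prop:Sobolev_emd_F-spaces_MV} for the inclusion $F^{s}_{p,2}(\R^{d},w) \hookrightarrow F^{0}_{p,q}(\R^{d},w_{s,p})$ (for any $q \in (0,\infty]$, in particular $q \in \{1,2\}$) becomes
\[
(\star)\qquad \sup_{\nu \in \N,\, m \in \Z^{d}}\frac{2^{-\nu sp}}{w(Q_{\nu,m})}\int_{Q_{\nu,m}}|x_{1}|^{-sp}\,w(x)\,dx < \infty.
\]

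The heart of the argument is to show that $(\star)$ is equivalent to the hypothesis \eqref{eq:prop:Sobolev_emd_F-spaces_MV;C;specific_situation;simplified}, i.e.\ that the supremum over all $m \in \Z^{d}$ may be replaced by the supremum over $m \in \{0\} \times \Z^{d-1}$. One implication is trivial. For the other I would split the cubes according to whether they meet the hyperplane $\{x_{1}=0\}$: if $m = (m_{1},m') \in \Z \times \Z^{d-1}$ with $m_{1} \neq 0$, then on $Q_{\nu,m} = Q[2^{-\nu}m, 2^{-\nu-1}]$ one has $|x_{1}| \geq 2^{-\nu}|m_{1}| - 2^{-\nu-1} \geq 2^{-\nu-1}$, hence $|x_{1}|^{-sp} \leq 2^{(\nu+1)sp}$ on $Q_{\nu,m}$ and the corresponding term in $(\star)$ is bounded by $2^{sp}$, uniformly in $\nu$ and in such $m$; the remaining cubes, those with $m_{1}=0$, are exactly the ones occurring in \eqref{eq:prop:Sobolev_emd_F-spaces_MV;C;specific_situation;simplified}. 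Thus $(\star)$ holds with supremum $\leq \max\{2^{sp},C\}$, where $C$ is the constant in \eqref{eq:prop:Sobolev_emd_F-spaces_MV;C;specific_situation;simplified}. This reduction is completely elementary; no doubling property of $w_{s,p}$ is needed.

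It then remains to assemble the pieces. For the implication ``\eqref{eq:prop:Sobolev_emd_F-spaces_MV;C;specific_situation;simplified} $\Rightarrow$ inclusion'' I would assume $w_{s,p} \in A_{\infty}$ and \eqref{eq:prop:Sobolev_emd_F-spaces_MV;C;specific_situation;simplified}, deduce $(\star)$ from the previous paragraph, apply Theorem~\ref{prop:Sobolev_emd_F-spaces_MV} (with $s_{0}=s>0=s_{1}$, $p_{0}=p_{1}=p$, $q_{0}=2$, $q_{1}=1$, $w_{0}=w \in A_{p} \subset A_{\infty}$ and $w_{1}=w_{s,p} \in A_{\infty}$) to obtain $F^{s}_{p,2}(\R^{d},w) \hookrightarrow F^{0}_{p,1}(\R^{d},w_{s,p})$, and then chain this with $H^{s}_{p}(\R^{d},w) = F^{s}_{p,2}(\R^{d},w)$ and $F^{0}_{p,1}(\R^{d},w_{s,p}) \hookrightarrow L^{p}(\R^{d},w_{s,p})$. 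For the converse I would additionally assume $w_{s,p} \in A_{p}$, so that $L^{p}(\R^{d},w_{s,p}) = F^{0}_{p,2}(\R^{d},w_{s,p})$ and the assumed inclusion reads $F^{s}_{p,2}(\R^{d},w) \hookrightarrow F^{0}_{p,2}(\R^{d},w_{s,p})$; then the ``only if'' part of Theorem~\ref{prop:Sobolev_emd_F-spaces_MV} yields $(\star)$, hence a fortiori \eqref{eq:prop:Sobolev_emd_F-spaces_MV;C;specific_situation;simplified}. I expect the only genuinely delicate point to be the inclusion $F^{0}_{p,1}(\R^{d},w_{s,p}) \hookrightarrow L^{p}(\R^{d},w_{s,p})$ when $w_{s,p}$ is merely $A_{\infty}$: there \eqref{eq:triebel_into_Lp;A_oneindig} only supplies the estimate on $\mathcal{S}(\R^{d})$, and one must extend it using the density of $\mathcal{S}(\R^{d})$ in $H^{s}_{p}(\R^{d},w)$ (which follows from $\mathcal{S}(\R^{d}) \stackrel{d}{\hookrightarrow} L^{p}(\R^{d},w)$ and the fact that $\mathcal{J}_{s}$ restricts to a bijection of $\mathcal{S}(\R^{d})$) together with the continuous inclusions of both $H^{s}_{p}(\R^{d},w)$ and $L^{p}(\R^{d},w_{s,p})$ into $\mathcal{S}'(\R^{d})$ --- the latter holding because $w_{s,p}^{1-p'} = |x_{1}|^{sp'}w^{1-p'}$ is locally integrable, as $w^{1-p'} \in A_{p'}$. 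Everything else is routine bookkeeping.
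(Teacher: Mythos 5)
Your proposal is correct and follows essentially the same path as the paper: reduce to a Triebel--Lizorkin embedding via \eqref{eq:identity_Bessel-Potential_Triebel-Lizorkin;scalar-valued}, \eqref{eq:triebel_into_Lp;A_oneindig}, and a density argument (the paper happens to use $L^{0}(\R^{d})$ rather than $\mathcal{S}'(\R^{d})$ as the ambient Hausdorff space, but that is immaterial), invoke Theorem~\ref{prop:Sobolev_emd_F-spaces_MV}, and observe that cubes with $m_{1}\neq 0$ contribute a uniformly bounded term so the supremum may be restricted to $m_{1}=0$. The bound $|x_{1}|\geq(|m_{1}|-\tfrac12)2^{-\nu}$ you use is the same elementary estimate the paper employs.
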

\begin{proof}
For the inclusion $H^{s}_{p}(\R^{d},w) \hookrightarrow L^{p}(\R^{d},w_{s,p})$ it is sufficient that $F^{s}_{p,2}(\R^{d},w) \hookrightarrow F^{0}_{p,1}(\R^{d},w_{s,p})$.
This follows from the identity $F^{s}_{p,2}(\R^{d},w) \stackrel{\eqref{eq:identity_Bessel-Potential_Triebel-Lizorkin;scalar-valued}}{=} H^{s}_{p}(\R^{d},w)$, denseness of $\mathcal{S}(\R^{d})$ in $H^{s}_{p}(\R^{d},w)$, the inclusion $(\mathcal{S}(\R^{d}),\norm{\,\cdot\,}_{F^{0}_{p,1}(\R^{d},w_{s,p})}) \stackrel{\eqref{eq:triebel_into_Lp;A_oneindig}}{\hookrightarrow} L^{p}(\R^{d},w_{s,p})$ and the fact that $H^{s}_{p}(\R^{d},w)$ and $L^{p}(\R^{d},w_{s,p})$ are both continuously included in the Hausdorff topological space $L^{0}(\R^{d})$. In the case that $w_{s,p} \in A_{p}$, there are the identities $F^{s}_{p,2}(\R^{d},w) = H^{s}_{p}(\R^{d},w)$ and $L^{p}(\R^{d},w_{s,p}) = F^{0}_{p,2}(\R^{d},w_{s,p})$ (see \eqref{eq:identity_Bessel-Potential_Triebel-Lizorkin;scalar-valued}), so the inclusion $H^{s}_{p}(\R^{d},w) \hookrightarrow L^{p}(\R^{d},w_{s,p})$ just becomes $F^{s}_{p,2}(\R^{d},w) \hookrightarrow F^{0}_{p,2}(\R^{d},w_{s,p})$.
Therefore, in order to prove the proposition, it is enough to show that, for every $q \in [1,\infty]$, the inclusion
\begin{equation}\label{eq:prop:inclusie_expliciete_conditie;incl_enough}
F^{s}_{p,2}(\R^{d},w) \hookrightarrow F^{0}_{p,q}(\R^{d},w_{s,p})
\end{equation}
is equivalent to the condition \eqref{eq:prop:Sobolev_emd_F-spaces_MV;C;specific_situation;simplified}.

By Theorem~\ref{prop:Sobolev_emd_F-spaces_MV}, the inclusion \eqref{eq:prop:inclusie_expliciete_conditie;incl_enough} holds true if and only if
\begin{equation*}\label{eq:prop:Sobolev_emd_F-spaces_MV;C;specific_situation}
\sup_{\nu \in \N, m \in \Z^{d}}2^{-\nu s}\norm{x \mapsto |x_{1}|^{-s}}_{L^{p}\left(Q_{\nu,m},\frac{1}{w(Q_{\nu,m})}w\right)} = \sup_{\nu \in \N, m \in \Z^{d}}2^{-\nu s}\left( \frac{w_{s,p}(Q_{\nu,m})}{w(Q_{\nu,m})} \right)^{1/p} < \infty.
\end{equation*}
But this condition is equivalent to \eqref{eq:prop:Sobolev_emd_F-spaces_MV;C;specific_situation;simplified}.
Indeed, for every $\nu \in \N$ and $m \in \Z^{d}$ with $m_{1} \neq 0$ we have
\[
|x_{1}| \geq (|m_{1}|-1/2)\,2^{-\nu} \geq \frac{1}{2}|m_{1}|\,2^{-\nu}, \quad\quad x \in Q_{\nu,m},
\]
implying that
\[
2^{-\nu s}\norm{x \mapsto |x_{1}|^{-s}}_{L^{p}\left(Q_{\nu,m},\frac{1}{w(Q_{\nu,m})}w\right)}
\leq 2^{s}|m_{1}|^{-s} \leq 2^{s}.
\]
\end{proof}

Let $d=n+k$ with $n,k \in \N$. For $\alpha,\beta > -n$ we define the weight $v_{\alpha,\beta}$ on $\R^{d} $ by
\begin{equation}\label{eq:example_power_weight}
v_{\alpha,\beta}(x,y) := \left\{\begin{array}{ll}
|x|^{\alpha} &\mbox{if}\:\: |x| \leq 1,\\
|x|^{\beta} &\mbox{if}\:\: |x| > 1,
\end{array}\right. \quad\quad (x,y) \in \R^{d} = \R^{n} \times \R^{k}.
\end{equation}
Given $p \in (1,\infty)$, we have $v_{\alpha,\beta} \in A_{p}$ if and only if $\alpha,\beta \in (-n,n(p-1))$; see \cite[Proposition~2.6]{Haroske&Skrzypczak_entropy_and_aprroximation_numbers}. For $n=1$ and $k=d-1$, we have $v_{\gamma,\gamma}=w_{\gamma}$ \eqref{eq:intro:power_weight} for every $\gamma > -1$.

\begin{ex}\label{ex:voorbeelden_inclusie}
Let $s >0$ and $p \in (1,\infty)$.
\begin{itemize}
\item[(i)] Suppose $w = w_{1} \otimes w_{2}$ with $w_{1} \in A_{p}(\R)$ and $w_{2} \in A_{p}(\R^{d-1})$. Then \eqref{eq:prop:Sobolev_emd_F-spaces_MV;C;specific_situation;simplified} reduces to the corresponding $1$-dimensional condition on $w_{1}$:
    \begin{equation}\label{ex:eq:prop:Sobolev_emd_F-spaces_MV;C;specific_situation;simplified;red_1-dim}
    \sup_{\nu \in \N}2^{-\nu sp}\frac{1}{w_{1}(Q_{\nu,0})}\int_{Q_{\nu,0}}|t|^{-sp}\,w_{1}(t)\,dt < \infty
    \end{equation}
\item[(ii)] Let $\alpha,\beta \in (-1,p-1)$. Consider the weight $w=v_{\alpha,\beta}$ from \eqref{eq:example_power_weight} for $n=1$ and $k=d-1$. There is the inclusion $H^{s}_{p}(\R^{d},w) \hookrightarrow L^{p}(\R^{d},w_{s,p})$ if and only if $s<\frac{1+\alpha}{p}$. Given a UMD space $X$, by Theorem~\ref{thm:pointwise_multiplier} we thus have that $1_{\R^{d}_{+}}$ is a pointwise multiplier on $H^{s}_{p}(\R^{d},v_{\alpha,\beta};X)$ if and only if $s<\frac{1+\alpha}{p}$. In the case $\alpha=\beta$ this is precisely \cite[Theorem~1.1]{Meyries&Veraar_pt-multiplication} restricted to positive smoothness; note that the general case $\alpha,\beta \in (-1,p-1)$ can be deduced from the case $\alpha=\beta \in (-1,p-1)$.
\end{itemize}
\end{ex}
\begin{proof}[Proof of (ii).] By (i) we may without loss of generality assume that $d=1$. Note that $w_{s,p}$ is the the weight $v_{\alpha-sp,\beta}$
\eqref{eq:example_power_weight} for $n=1$ and $k=0$.

First assume that there is the inclusion $H^{s}_{p}(\R^{d},w) \hookrightarrow L^{p}(\R^{d},w_{s,p})$. Since $C^{\infty}_{c}(\R^{d}) \subset H^{s}_{p}(\R^{d},w)$, it follows that $v_{\alpha-sp,\beta}=w_{s,p} \in L^{1}_{loc}(\R^{d})$. Hence, $\alpha-sp > -1$.

Conversely, assume that $s<\frac{1+\alpha}{p}$. Then $\alpha-sp \in (-1,p-1)$, so that $w_{s,p} = v_{\alpha-sp,\beta} \in A_{p}$.
Using that $s<\frac{1+\alpha}{p}$, a simple computation shows that \eqref{ex:eq:prop:Sobolev_emd_F-spaces_MV;C;specific_situation;simplified;red_1-dim} holds true for $w=v_{\alpha,\beta}$. By Proposition~\ref{prop:inclusie_expliciete_conditie} we thus obtain that $H^{s}_{p}(\R^{d},w) \hookrightarrow L^{p}(\R^{d},w_{s,p})$.
\end{proof}

\section*{Acknowledgements.}
The author is supported by the Vidi subsidy 639.032.427 of the Netherlands Organisation for Scientific Research (NWO).
The author would like to thank Mark Veraar for careful reading and useful comments and suggestions; in particular, for his suggestion to use the Mihlin-H\"older multiplier theorem \cite[Theorem~3.1]{Hytonen_Mihlin-Hormander}. He would also like to thank the anonymous referee for careful reading.

\bibliographystyle{plain}

\end{document}